\documentclass{amsart}
\usepackage{amssymb,amsmath,tikz}
\usepackage[utf8]{inputenc}
\newcommand{\tors}{{\sf {tors}}~\!}
\newcommand{\torss}{{\sf {tors}}}
\newcommand{\tf}{{\sf {tf}}~\!}
\renewcommand{\mod}{{\sf {mod}}~\!}
\newcommand{\ind}{{\sf {ind}}~\!}
\newcommand{\im}{{\sf im}}
\newcommand{\C}{\mathcal C}
\newcommand{\T}{\mathcal T}
\newcommand{\F}{\mathcal F}
\newcommand{\U}{\mathcal U}
\newcommand{\Y}{\mathcal Y}
\newcommand{\Z}{\mathcal Z}
\newcommand{\X}{\mathcal X}
\newcommand{\V}{\mathcal V}
\newcommand{\Hom}{\operatorname{Hom}}
\newcommand{\uperp}{{}^\perp}
\newcommand{\onto}{\twoheadrightarrow}
\newcommand{\into}{\hookrightarrow}
\newtheorem{thm}{Theorem}[section]
\newtheorem{proposition}{Proposition}[section]
\newtheorem{cor}{Corollary}[section]
\newtheorem{lemma}{Lemma}[section]
\newtheorem{example}{Example}[section]
\newtheorem{question}{Question}[section]
\newcommand{\cji}{\operatorname{Ji}^c}
\newcommand{\cmi}{\operatorname{Mi}^c}
\newcommand{\phibar}{\overline\phi}
\renewcommand{\to}{\mapsto}
\newcommand{\sto}{\rightarrow}
\newcommand{\br}{{\sf br}~\!}
\newcommand{\Gen}{\operatorname{Gen}}
\usetikzlibrary{decorations.pathreplacing}
\usetikzlibrary{arrows}
\DeclareFontFamily{U}{wncy}{}
    \DeclareFontShape{U}{wncy}{m}{n}{<->wncyr10}{}
    \DeclareSymbolFont{mcy}{U}{wncy}{m}{n}
    \DeclareMathSymbol{\Br}{\mathord}{mcy}{"58}

\title{An introduction to the lattice of torsion classes}
\author{Hugh Thomas}

\address{Lacim, UQAM, CP 8888, Succursale Centre-ville, Montréal, QC, H3C 3P8 Canada}

%\abstract{In this expository note, I present some of the key features of the lattice of torsion classes of a finite-dimensional algebra, focussing in particular on its complete semidistributivity and consequences thereof.}

\subjclass[2010]{Primary: 16G20, Secondary: 16S90, 06B99}
\keywords{Torsion classes, semidistributive lattices, lattice congruences}
\begin{document}

\begin{abstract}In this expository note, I present some of the key features of the lattice of torsion classes of a finite-dimensional algebra, focussing in particular on its complete semidistributivity and consequences thereof. This is intended to serve as an introduction to recent work by Barnard--Carroll--Zhu and Demonet--Iyama--Reading--Reiten--Thomas.\end{abstract}
\maketitle

Let $A$ be a finite-dimensional algebra over a field $k$. We write
$\mod A$ for the category of finite-dimensional left $A$-modules.
There is a class of subcategories of $\mod A$ which are called torsion classes.
The torsion classes, ordered by inclusion, form a poset which we denote
$\tors A$.
This poset is in fact a lattice, and its lattice-theoretic properties  have
recently been the focus of
some attention, as in \cite{J, GM, BCZ, DIRRT, AP}.

In this note I will present some of the interesting features of these lattices.
The proofs in this note are self-contained except
for the final section, where we present without proof an application of
these ideas to the study of finite semidistributive lattices from
\cite{RST}. This note is intended as a gentle introduction to the subject.
No results in this note are new. The presentation is, of course, novel in
some respects,
and I hope that it is helpful as an introduction to the subject.

%My understanding of $\tors A$ is greatly informed by \cite{DIRRT}. This note
%could be viewed as an introduction to that article: the material presented
%here can be found there as well (except the final section),
%sometimes in more general formulations.
%I thought it would be useful to have a more accessible, if less general, version.

Let me now quickly summarize the contents of this note. Terms which are
undefined here will be introduced later where they logically fit.
In addition to presenting the easy explanation that $\tors A$ is a lattice,
I will prove the result of Barnard, Carroll, and Zhu \cite{BCZ} that the
completely join irreducible elements of $\tors A$ are in bijection with the bricks
of $A$.
I will show that $\tors A$ is completely semidistributive. I will not take the
most direct route to this result, but rather spend some time developing independently
properties of $\tors A$ and corresponding properties of semidistributive
lattices, in an attempt to illuminate how 
semidistributivity gives us a
helpful perspective through which to view the combinatorics of $\tors A$.
I will show that $\tors A$ is weakly atomic. We will then see how
an algebra
quotient induces a lattice quotient map between the corresponding lattices of torsion classes, and study this lattice quotient.
In the final section, I will present (without proof) a construction of
finite semidistributive lattices developed in \cite{RST}, and inspired
by the study of lattices of torsion classes.

%To study $\mod A$, it's useful to consider combiantorial objects
%defined from $A$. This is easier than trying to understand all of
%$\mod A$, and the combinatorics that we learn about helps us to get some
%kind of understanding of $\mod A$. Also, sometimes the combinatorics has
%already arisen in some independent way, so representation theory
%can potentially provide new insights.
%Possible examples of the kind of combinatorial structures that
%I have in mind
%are the AR-quiver, or the lattice of wide subcategories --- or the lattice of
%torsion
%classes, the topic which I will be discussing here.

\section{Definition of torsion classes}
%I will be trying to present an elementary and self-contained introduction.
%Some easy facts mentioned in this section will not be proved.
%Further information, including proofs of such facts, can be found in \cite{ASS}.
%To make this note as self-contained as possible, we include proofs of
%the simple facts about torsion classes mentioned in this section.
For the elementary material in this section and the two following, 
a further
reference is \cite[Chapter VI]{ASS}.

A torsion class in $\mod A$ is a subcategory $\mathcal T$ of $\mod A$
which is \begin{itemize}
\item closed under quotients (i.e., $Y\in \mathcal T$ and $Y\onto Z$ implies
  $Z\in \mathcal T$).
\item closed under extensions (i.e., $X,Z \in \mathcal T$ and
  $0\rightarrow X \rightarrow Y \rightarrow Z \rightarrow 0$ implies
  $Y\in \mathcal T$.
\end{itemize}

I should clarify that for me a subcategory is always full, closed under
direct sums, direct summands, and isomorphisms. In other words, a
subcategory of $\mod A$
can be specified as the direct sums of copies of some subset of
the indecomposable modules of $A$.

We write $\tors A$ for the set of torsion classes of $\mod A$, and we think
of it as a poset ordered by inclusion.

\begin{example}[Type $A_2$]  Our quiver $Q$ is $1\leftarrow 2$, and the algebra is the path algebra  $A=kQ$.
The category $\mod A$
has three indecomposable objects $S_1, P_2, S_2$, which I denote by
their dimension vectors as $[10]$, $[11]$, and $[01]$, respectively.
%There is a
%short exact sequence
%$$0 \rightarrow S_1 \rightarrow P_2 \rightarrow S_2 \rightarrow 0.$$

The torsion classes are as follows, where the angle brackets denote additive
hull.
$$\begin{tikzpicture}
  \node (a) at (0,0) {$\langle [10],[11],[01] \rangle$};
  \node (b) at (-1,-1) {$\langle [11],[01]\rangle$};
  \node (c) at (-1,-2) {$\langle [01]\rangle$};
  \node (d) at (0,-3) {$0$};
  \node (e) at (1,-2) {$\langle [10]\rangle$};

\draw (a) -- (b) -- (c) -- (d) -- (e) -- (a);
\end{tikzpicture}$$
\end{example} 
 
\begin{example}[Type $A_n$] For an example in type $A_n$, where $Q$ is
  $1\leftarrow \dots \leftarrow n$, see \cite{K}. \end{example}

\begin{example}[Kronecker quiver]  \label{ex3}
  Let $k$ be algebraically closed. Let
  $Q$ be the quiver
  $\begin{tikzpicture}[baseline=-1.1mm] \node (a) at (0,0) {$1$};
    \node (b) at (1,0) {$2$};
    \draw[->] ([yshift=.5mm]b.west) -- ([yshift=.5mm]a.east);
    \draw[->] ([yshift=-.5mm]b.west) -- ([yshift=-.5mm]a.east); \end{tikzpicture}$
  %{\leftarrow \atop \leftarrow}$
  and let $A=kQ$.

  The AR quiver is displayed in Figure \ref{figb}, where I write $[ab]$ for an indecomposable
  module with dimension vector $(a,b)$.
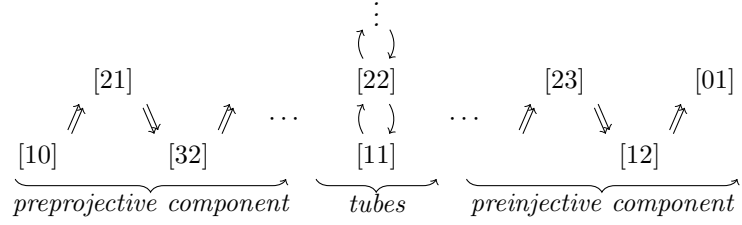
\begin{figure}
  $$\begin{tikzpicture}[->]

    \node (a) at (0,0) {$[10]$};
    \node (b) at (1,1) {$[21]$};
    \node (c) at (2,0) {$[32]$};
    \node (d) at (3,1) {\phantom{$[43]$}};
\node (x) at (3.3,.5) {$\cdots$};
    
    \node (e) at (4.5,0) {$[11]$};
    \node (f) at (4.5,1) {$[22]$};
    \node (g) at (4.5,2) {$\vdots$};

    \draw ([xshift=.5mm]a.north east) -- ([yshift=-.5mm]b.south west);
    \draw ([yshift=.5mm]a.north east) -- ([xshift=-.5mm]b.south west);
    \draw ([xshift=.5mm]b.south east) -- ([yshift=.5mm]c.north west);
    \draw ([yshift=-.5mm]b.south east) -- ([xshift=-.5mm]c.north west);
    \draw ([xshift=.5mm]c.north east) -- ([yshift=-.5mm]d.south west);
    \draw ([yshift=.5mm]c.north east) -- ([xshift=-.5mm]d.south west);

    \draw  (e) to [bend left=30] (f);
    \draw  (f) to [bend left=30](g);
    \draw (g) to [bend left=30] (f);
    \draw (f) to [bend left=30] (e);

    \node (h) at (6,0) {\phantom{$[10]$}};
    \node (i) at (7,1) {$[23]$};
    \node (j) at (8,0) {$[12]$};
    \node (k) at (9,1) {$[01]$};
\node (l) at (5.7,.5) {$\cdots$};

    \draw ([xshift=.5mm]h.north east) -- ([yshift=-.5mm]i.south west);
    \draw ([yshift=.5mm]h.north east) -- ([xshift=-.5mm]i.south west);
    \draw ([xshift=.5mm]i.south east) -- ([yshift=.5mm]j.north west);
    \draw ([yshift=-.5mm]i.south east) -- ([xshift=-.5mm]j.north west);
    \draw ([xshift=.5mm]j.north east) -- ([yshift=-.5mm]k.south west);
    \draw ([yshift=.5mm]j.north east) -- ([xshift=-.5mm]k.south west);

    \draw[decorate,decoration={brace,mirror,amplitude=5pt}] (-.3,-.3) -- node[yshift=-2pt][below]{preprojective component} (3.3,-.3);
    \draw[decorate,decoration={brace,mirror,amplitude=5pt}] (3.7,-.3) -- node [yshift=-2pt][below] {tubes} (5.3,-.3);
    \draw[decorate,decoration={brace,mirror,amplitude=5pt}] (5.7,-.3) -- node[yshift=-2pt][below]{preinjective component} (9.3,-.3);
    
  \end{tikzpicture}$$
  \caption{\label{figb} The AR quiver of the path algebra of the Kronecker quiver.}
\end{figure}
  
  The tubes are indexed by points in $\mathbb P^1(k)=k\bigcup\{\infty\}$;
  they each look the same. 
  The torsion classes consist of the additive hull of each of the following sets:\begin{itemize}
  \item  any final part of the preinjective component,
  \item all preinjectives and a subset of the tubes,
  \item all preinjectives, all tubes, and a final part of the
    preprojectives,
  \item $S_1=[10]$.\end{itemize}

  An image of the lattice of torsion classes is displayed in Figure
  \ref{figa}. There, $\mathcal I$ denotes the
  preinjective component, and $\mathcal R_x$ denote the tube corresponding to
  $x\in \mathbb P^1(k)$. I write $\ind \mod A$ for the set of indecomposable
  $A$-modules.
  
  The interval between $\mathcal I$ and $\langle \mathcal I, \bigcup_{x\in\mathbb P^1(k)} \mathcal R_x\rangle$ is isomorphic to the Boolean lattice of all subsets of $\mathbb P^1(k)$, ordered by inclusion. 

  \begin{figure}
  $$\begin{tikzpicture}

    \node (a) at (-1,-0.3) {$0$};
    \node (b) at (1,0.7) {$\langle [01]\rangle$};
    \node (c) at (1,2) {$\langle [12],[01]\rangle$};
    \node (d) at (1,3) {$\vdots$};
    \node (e) at (1,4) {$\mathcal I$};
    \node (h) at (-1,5) {$\langle \mathcal I, \mathcal R_{-1}\rangle$};
    \node (hh) at (.1,5) {$\cdots$};
    \node (f) at (1,5) {$\langle \mathcal I,\mathcal R_0\rangle$};
    \node (ff) at (2,5) {$\cdots$};
    \node (g) at (3,5) {$\langle \mathcal I,\mathcal R_1\rangle$};
    \node (i) at (1,7) {$\langle \mathcal I,\bigcup_{x\in\mathbb P^1(k)} \mathcal R_x\rangle$};
    \node (j) at (1,6) {\phantom{$\mathcal I,\mathcal R_{-1}\rangle$}};
      \node (k) at (-1,6) {\phantom{$\langle\mathcal I,\mathcal R_{-1}\rangle$}};
      \node (l) at (3,6) {\phantom{$\langle\mathcal I,\mathcal R_1\rangle$}};
      \node (kk) at (0,6) {$\cdots$};
      \node (jj) at (2,6) {$\cdots$};
      \node (ll) at (-2,5) {$\cdots$};
      \node (mm) at (4,5) {$\cdots$};
      \node (nn) at (3,6) {$\cdots$};
      \node (pp) at (-1,6) {$\cdots$};
      \node (qq) at (1,6) {$\cdots$};
    \draw (a)-- node[below right]{\color{blue}$[01]$}(b)--node[right]{\color{blue}$[12]$}(c)--(d);
    \draw ([yshift=-2mm]d.north)--(e);
    \draw (e) -- (f);
    \draw (e) -- (g);
    \draw (e) -- (h);
    \draw (i) -- (j);
    \draw (i) -- (k);
    \draw (i) --(l);
    
    \draw[decorate,decoration={brace,mirror,amplitude=5pt}] (4.5,4)--node[xshift=2pt][right]{Boolean lattice}(4.5,7);
    \node (m) at (1,8) {$\vdots$};
    \node (n) at (1,9) {$\langle \ind \mod A \setminus\{[10],[21]\}\rangle$};
    \node (o) at (1,10.3) {$\langle \ind \mod A \setminus \{[10]\}\rangle$};
    \node (p) at (-1,11.3) {$\mod A$};
    \node (q) at (-3.5,5.5) {$\langle [10]\rangle$};

    \draw (a) -- node[left]{\color{blue}$[10]$}(q) -- node[left]{\color{blue}$[01]$}(p);
    \draw (p) -- node[above right]{\color{blue}$[10]$}(o) --node[right]{\color{blue}$[21]$}(n)-- ([yshift=-2mm] m.north);
    \draw (m)--(i);
    \end{tikzpicture}$$
    \caption{\label{figa}The lattice of torsion classes for $A$ the path algebra of the Kronecker quiver. The labels on the edges should be ignored for now; they are the brick labelling $\hat\gamma$ defined in Section \ref{cons}.}
  \end{figure}
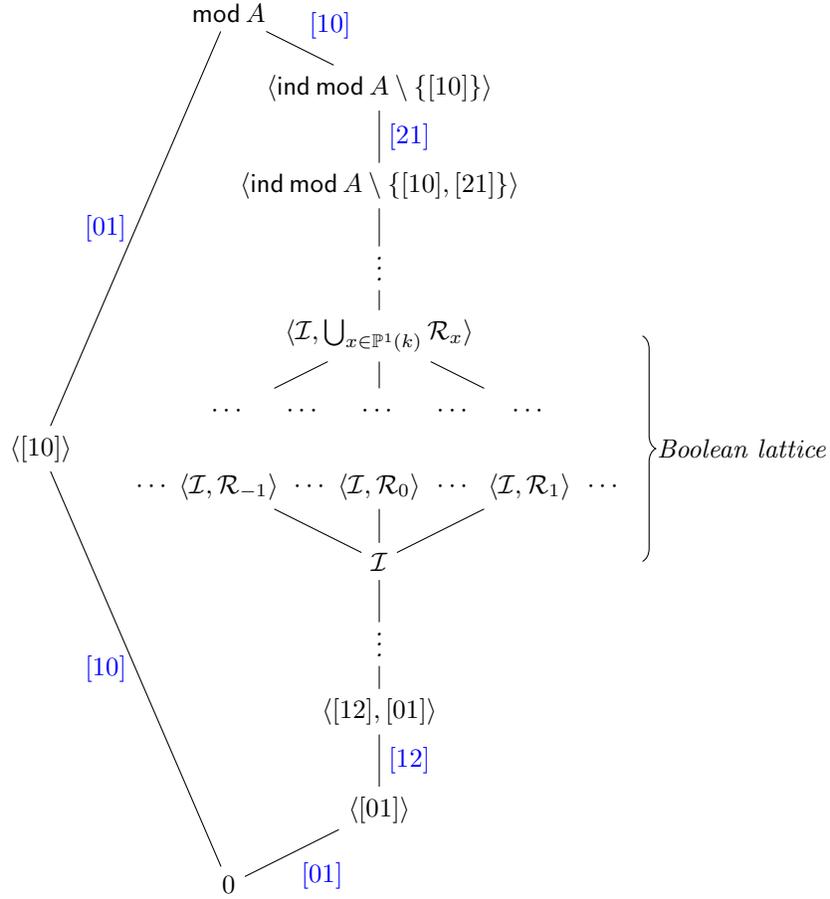
  \end{example}

\section{Specifying a torsion class}

In general, how can we specify a torsion class? For $\mathcal C$ a subcategory of
$\mod A$, define $T(\mathcal C)$ to be the subcategory whose
modules are filtered by quotients of objects from $\mathcal C$. That is
to say $M\in T(\mathcal C)$ if and only if $M$ admits a filtration
$0=M_0 \subset M_1 \subset \dots \subset M_r=M$ with $M_i/M_{i-1}$ a quotient of an object
of $\mathcal C$ for all $i$.

\begin{proposition}\label{p1} For $\mathcal C$ an arbitrary subcategory,
  $T(\mathcal C)$ is the smallest torsion class
containing all the objects from $\mathcal C$. \end{proposition}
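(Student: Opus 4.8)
The plan is to verify two things: first, that $T(\C)$ is itself a torsion class and contains $\C$; second, that any torsion class containing $\C$ must contain all of $T(\C)$. Containment of $\C$ in $T(\C)$ is immediate, since any object $M$ of $\C$ is a quotient of itself (via the filtration $0 \subset M$).

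For the first part, I would check the two defining closure properties of a torsion class. Closure under quotients: given $M \in T(\C)$ with a filtration $0 = M_0 \subset \dots \subset M_r = M$ by quotients of objects of $\C$, and a surjection $M \onto N$, I would push the filtration forward, setting $N_i$ to be the image of $M_i$ in $N$. Then $N_i/N_{i-1}$ is a quotient of $M_i/M_{i-1}$ (the surjection $M_i \onto N_i$ descends to the quotients), hence a quotient of an object of $\C$, so $N \in T(\C)$. Closure under extensions: given $0 \to X \to Y \to Z \to 0$ with $X, Z \in T(\C)$, I would concatenate a filtration of $X$ with the preimage in $Y$ of a filtration of $Z$; each successive quotient in the first part is a quotient of an object of $\C$ by hypothesis, and each successive quotient in the second part is isomorphic to the corresponding successive quotient of the filtration of $Z$, hence again a quotient of an object of $\C$. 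So $Y \in T(\C)$.

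For the second part — that $T(\C)$ is the \emph{smallest} such torsion class — suppose $\T$ is any torsion class with $\C \subseteq \T$. Take $M \in T(\C)$ with filtration $0 = M_0 \subset \dots \subset M_r = M$, where $M_i/M_{i-1}$ is a quotient of some object $C_i \in \C \subseteq \T$. Since $\T$ is closed under quotients, $M_i/M_{i-1} \in \T$ for each $i$. Then an induction on $i$ using closure under extensions (applied to $0 \to M_{i-1} \to M_i \to M_i/M_{i-1} \to 0$) shows $M_i \in \T$ for all $i$, so $M = M_r \in \T$. Hence $T(\C) \subseteq \T$.

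I do not expect any serious obstacle here; the proof is a routine diagram-chase once the filtration bookkeeping is set up correctly. The one point requiring a little care is the extension-closure step for $T(\C)$: one must be explicit that the filtration of $Y$ obtained by splicing is genuinely a filtration by quotients of objects of $\C$, using that for a submodule $X \subseteq Y$, the lattice of submodules of $Y$ containing $X$ matches that of $Y/X$, so the pulled-back filtration of $Z = Y/X$ has successive quotients isomorphic to those of the original filtration of $Z$. Everything else is formal.
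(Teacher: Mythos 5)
Your proof is correct and follows essentially the same route as the paper: closure under quotients via the pushed-forward filtration $N_i = (M_i+L)/L$, closure under extensions by splicing filtrations of the sub and quotient, and minimality by an induction showing every filtered module lies in any torsion class containing $\C$. You are only slightly more explicit than the paper about the bookkeeping (the submodule-lattice correspondence and the induction for minimality), which is fine.
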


\begin{proof} Suppose that $M\in T(\mathcal C)$, so that we have a
filtration $0=M_0 \subset M_1 \subset \dots \subset M_r=M$ with $M_i/M_{i-1}$ a quotient
of an object of $\mathcal C$ for all $i$. Consider some quotient of
$M$, say $N=M/L$.
Then define $N_i=(M_i+L)/L$, which forms a filtration of $N$. We see that
$N_i/N_{i-1}$ is a quotient of $M_i/M_{i-1}$, and therefore a quotient of an
object of $\mathcal C$. This shows that $N\in T(\mathcal C)$.

Suppose next that we have two modules $M$ and $N$, both in $T(\mathcal C)$,
and an extension
$$0 \rightarrow M \rightarrow E \rightarrow N \rightarrow 0.$$

Now $E$ has a two-step filtration $0 \subset M \subset E$, with $E/M \simeq N$,
and we can refine the two steps of the filtration to filtrations of
$M$ and $N$ with subquotients being quotients of $\mathcal C$, since we know
such filtrations exist. This shows
that $E\in T(\mathcal C)$.
It follows that $T(\mathcal C)$ satisfies the two defining properties, and
is therefore a torsion class.

$T(\mathcal C)$ is the smallest torsion class containing $\mathcal C$ because any element
of $T(\mathcal C)$ is an iterated extension of quotients of $\mathcal C$, which
must be in any torsion class containing $\mathcal C$. 
\end{proof}

We now consider a second way to specify a torsion class. 
For $\mathcal C$ a subcategory of $\mod A$, define
$$\uperp \mathcal C=\{ X\in \mod A \mid \Hom(X,Y)=0 \textrm{ for all }
Y \in \mathcal C\}.$$

\begin{proposition}\label{p2} For $\mathcal C$ an arbitrary subcategory, $\uperp \mathcal C$ is a torsion class.
\end{proposition}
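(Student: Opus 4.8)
The plan is to verify directly that $\uperp\mathcal C$ satisfies the two defining properties of a torsion class: closure under quotients and closure under extensions. Both will follow from elementary properties of the $\Hom$ functor applied to short exact sequences.

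First I would check closure under quotients. Suppose $X\in\uperp\mathcal C$ and we have a surjection $X\onto Z$. For any $Y\in\mathcal C$, I want to show $\Hom(Z,Y)=0$. Any map $Z\to Y$ can be precomposed with the surjection $X\onto Z$ to yield a map $X\to Y$, which must be zero since $X\in\uperp\mathcal C$. Because $X\onto Z$ is epic, precomposition with it is injective on Hom-sets, so the map $Z\to Y$ was already zero. Hence $Z\in\uperp\mathcal C$.

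Next I would check closure under extensions. Suppose $X,Z\in\uperp\mathcal C$ and $0\to X\to Y\to Z\to 0$ is a short exact sequence; I want $Y\in\uperp\mathcal C$. Fix $Y'\in\mathcal C$ and take any $f\colon Y\to Y'$. Restricting $f$ along $X\into Y$ gives a map $X\to Y'$, which is zero because $X\in\uperp\mathcal C$. So $f$ vanishes on the image of $X$, hence factors through the cokernel $Y/X\cong Z$, giving a map $Z\to Y'$; but this is zero because $Z\in\uperp\mathcal C$, so $f=0$. This is precisely the left-exactness of $\Hom(-,Y')$: applying it to the sequence gives the exact sequence $0\to\Hom(Z,Y')\to\Hom(Y,Y')\to\Hom(X,Y')$, and vanishing of the outer terms forces $\Hom(Y,Y')=0$.

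Finally I should note that $\uperp\mathcal C$ is a subcategory in the sense fixed in the paper, i.e. it is closed under direct sums, direct summands, and isomorphisms; this is immediate since $\Hom(\bigoplus_i X_i, Y)=\prod_i\Hom(X_i,Y)$, so a finite direct sum lies in $\uperp\mathcal C$ iff each summand does. There is no real obstacle here: the only thing to be careful about is invoking left-exactness of the contravariant $\Hom$ functor in the right direction, namely $\Hom(-,Y')$ rather than $\Hom(Y',-)$, so that the kernel term $X$ contributes on the correct side of the short exact sequence.
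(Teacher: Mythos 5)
Your proof is correct and follows essentially the same route as the paper: closure under quotients via precomposing with the epimorphism, and closure under extensions via left exactness of $\Hom(-,Y')$. The extra remark about direct sums and summands is fine but not needed in the paper's treatment.
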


\begin{proof} Let $M\in\uperp \mathcal C$. Let $N$ be a quotient of $M$.
Since there are no non-zero morphisms from $M$ into any object of
$\mathcal C$, the same holds for $N$, so $N\in \uperp \mathcal C$.

Suppose now that we have $M$ and $N$ in $\uperp \mathcal C$, and an
extension:
$$0 \rightarrow M \rightarrow E \rightarrow N \rightarrow 0.$$
For any $Y\in \mathcal C$, we have $\Hom(M,Y)=0$ and $\Hom(N,Y)=0$, and
it follows from the left exactness of the $\Hom$ functor that
$\Hom(E,Y)=0$ as well. We deduce that $E \in \uperp \mathcal C$.

$\uperp \mathcal C$ satisfies the two defining conditions, and is therefore
a torsion class.
\end{proof}

\section{Torsion classes and torsion free classes}

There is a dual notion to that of torsion class, namely that of torsion free
class. A torsion free class in $\mod A$ is a subcategory closed under
submodules and extensions. 
We write $\tf A$ for the torsion free classes of $A$, and we think of it
as a poset ordered by inclusion.

As one should expect, in the setting of finite-dimensional algebras in
which we work, the theory of torsion free classes is completely parallel
to the theory of torsion classes. For $\C$ a subcategory of $\mod A$, 
define $F(\mathcal C)$ to be the
subcategory of $\mod A$ consisting of all modules filtered by submodules of
modules from $\mathcal C$. Then $F(\C)$ is the smallest torsion free class
containing $\mathcal C$. We can also define
$$\mathcal C^\perp=\{Y\in \mod A\mid \Hom(X,Y)=0 \textrm{ for all }
X\in \C\}.$$
One easily checks that for any subcategory
$\mathcal C$, the subcategory $\mathcal C^\perp$ is a torsion free class.

\begin{proposition}
Let $\T$ be a torsion class, and let $X\in\mod A$. There is a maximum
submodule of $X$ contained in $\T$.
\end{proposition}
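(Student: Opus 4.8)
The plan is to take the sum of all submodules of $X$ that lie in $\T$, and argue that this sum is itself in $\T$ and hence is the desired maximum. Concretely, let $\{X_i\}_{i\in I}$ be the (finite, since $X$ is finite-dimensional) collection of submodules of $X$ with $X_i\in\T$, and set $X_\T=\sum_{i\in I}X_i\subseteq X$. By construction $X_\T$ contains every submodule of $X$ lying in $\T$, so the only thing to check is that $X_\T$ itself belongs to $\T$.

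For this I would induct on the number of summands. A single $X_i$ is in $\T$ by hypothesis. For the inductive step it suffices to show that if $Y,Z$ are submodules of $X$ with $Y,Z\in\T$, then $Y+Z\in\T$. Here is where the two closure properties of a torsion class come in. There is a surjection $Y\oplus Z\onto Y+Z$ given by $(y,z)\to y+z$; since $\T$ is closed under direct sums and under quotients, and $Y\oplus Z\in\T$, we conclude $Y+Z\in\T$. (Alternatively, one can use the short exact sequence $0\to Y\cap Z\to Y\oplus Z\to Y+Z\to 0$ together with closure under quotients, but closure under quotients alone already suffices once we have $Y\oplus Z\in\T$.) Iterating, $X_\T\in\T$.

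Finally, $X_\T$ is a submodule of $X$, it lies in $\T$, and any submodule of $X$ lying in $\T$ is one of the $X_i$ and hence contained in $X_\T$; so $X_\T$ is the maximum submodule of $X$ contained in $\T$, as claimed.

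I do not expect a serious obstacle here: the only point that needs the hypothesis that $\T$ is a torsion class (rather than an arbitrary subcategory) is the stability of $\T$ under finite sums of submodules, and that follows immediately from closure under direct sums and quotients. Finite-dimensionality of $X$ is used only to guarantee that the indexing set $I$ is finite, so that the induction terminates; in fact one could avoid even this by noting that the sum $X_\T$, being a submodule of the finite-dimensional module $X$, is already realized as a sum of finitely many of the $X_i$.
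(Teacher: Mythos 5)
Your proof is correct, but it routes around the paper's key step in a slightly different way. The paper shows that the submodules of $X$ lying in $\T$ are closed under pairwise sums by using the short exact sequence $0\to M\to M+N\to N/(N\cap M)\to 0$ together with closure under quotients (for the cokernel term) and closure under \emph{extensions}; you instead use the surjection $Y\oplus Z\onto Y+Z$ together with the standing convention that subcategories are closed under direct sums, so that only closure under quotients is needed. Both arguments are fine; yours is marginally more elementary in that it never invokes the extension axiom, while the paper's works verbatim even if one does not build additivity into the definition of subcategory. One small slip: your parenthetical claim that the collection $\{X_i\}_{i\in I}$ of submodules of $X$ in $\T$ is finite because $X$ is finite-dimensional is not justified --- a finite-dimensional module can have infinitely many submodules (already $k^2$ over $k[x,y]/(x,y)^2$ with $k$ infinite). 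However, your closing remark repairs this correctly: since $X_\T$ is a submodule of the finite-dimensional module $X$, it is already the sum of finitely many of the $X_i$ (choose a finite subfamily whose sum has maximal dimension), so the induction on pairwise sums suffices. Equivalently, one can argue as the paper does: pairwise sums stay in $\T$, so a submodule in $\T$ of maximal dimension contains every other one and is the desired maximum.
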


\begin{proof} If $M$ and $N$ are submodules
of $X$, then we have a short exact sequence
$$ 0 \rightarrow M \rightarrow M+N \rightarrow N/(N\cap M)\rightarrow 0$$
If $N$ and $M$ are both in $\mathcal T$, it follows that $M+N$ is also.
Because $X$ is finite-dimensional by assumption, it therefore has a
maximum submodule contained in $\mathcal T$.
\end{proof}

We denote this maximum submodule by
$t_\T X$. %, or $t(X)$ if it is clear what torsion class we are considering.

\begin{proposition} $X/t_\T X$ lies in $\T^\perp$.
\end{proposition}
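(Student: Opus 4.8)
The plan is to argue by contradiction: suppose some $Y \in \T$ admits a nonzero map $f \colon Y \to X/t_\T X$, and produce a submodule of $X$ strictly larger than $t_\T X$ but still lying in $\T$, contradicting maximality.

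First I would replace $f$ by its image: let $I = \im f \subseteq X/t_\T X$. Since $I$ is a quotient of $Y \in \T$ and $\T$ is closed under quotients, $I \in \T$. By assumption $I \neq 0$. Next I would pull $I$ back along the canonical projection $\pi \colon X \onto X/t_\T X$, setting $Z = \pi^{-1}(I)$. This is a submodule of $X$ containing $\ker \pi = t_\T X$, and it fits into a short exact sequence
$$ 0 \rightarrow t_\T X \rightarrow Z \rightarrow I \rightarrow 0. $$
Now $t_\T X \in \T$ by definition, and $I \in \T$ by the previous step, so closure of $\T$ under extensions gives $Z \in \T$.

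Finally I would invoke maximality of $t_\T X$: since $Z$ is a submodule of $X$ lying in $\T$, we must have $Z \subseteq t_\T X$, hence $Z = t_\T X$ and therefore $I = \pi(Z) = 0$, contradicting $I \neq 0$. This shows $\Hom(Y, X/t_\T X) = 0$ for every $Y \in \T$, which is exactly the statement that $X/t_\T X \in \T^\perp$.

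The only step that requires any care — and the one I would call the crux — is the pullback construction: one must check that forming $Z = \pi^{-1}(I)$ genuinely yields a submodule sitting in the displayed short exact sequence with the correct sub and quotient terms, so that closure under extensions applies. Everything else (closure under quotients, maximality of $t_\T X$) is immediate from the definitions and the previous proposition. No finiteness or further hypotheses beyond those already in force are needed.
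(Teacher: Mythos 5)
Your proof is correct and is essentially the paper's own argument: replace the map by its image (a quotient of an object of $\T$, hence in $\T$), pull back along the projection to get an extension of the image by $t_\T X$, and contradict the maximality of $t_\T X$. No gaps, and the step you flag as the crux (the pullback sitting in the displayed short exact sequence) is indeed the only point needing verification and is handled correctly.
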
 

\begin{proof} Suppose there were a non-zero map $f$ from some $M\in \T$ to
$X/t_\T X$. Then $\im f$ is a quotient of $M$, and therefore itself
in $\T$. The preimage of $\im f$ in $X$ is then an extension of $\im f$ by
$t_\T X$, and is therefore also in $\T$, contradicting the definition of
 $t_\T X$. \end{proof}

For any $X$ in $\mod A$, we  now have a short exact sequence:\begin{equation*}
  0 \rightarrow t_\T X \rightarrow X \rightarrow X/t_\T X\rightarrow 0.\tag{$*$}\end{equation*}
with the lefthand term in $\T$ and the righthand term in $\T^\perp$.

\begin{proposition} For $X\in \mod A$, any short exact sequence of the form
$$ 0\rightarrow X' \rightarrow X \rightarrow X'' \rightarrow 0$$
  with $X'$ in $\T$ and $X''\in \T^\perp$ is isomorphic to
  $(*)$. \end{proposition}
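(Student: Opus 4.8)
The plan is to use the inclusion of $X'$ into $X$ to realize $X'$ as a submodule of $X$, and then to squeeze it between $t_\T X$ and itself. First I would note that since $X'\in\T$ and $X'$ is (isomorphic to) a submodule of $X$, the maximality property in the definition of $t_\T X$ gives $X'\subseteq t_\T X$.

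For the reverse inclusion, I would consider the composite $t_\T X\into X\onto X''$, where the first map is the inclusion and the second is the surjection from the given short exact sequence (so its kernel is exactly $X'$). The source $t_\T X$ lies in $\T$ and the target $X''$ lies in $\T^\perp$, so by the definition of $\T^\perp$ this composite is zero. Hence $t_\T X$ is contained in the kernel of $X\onto X''$, which is $X'$. Combining the two inclusions yields $X'=t_\T X$ as submodules of $X$, and consequently $X''\cong X/X'=X/t_\T X$, compatibly with the maps.

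Finally I would assemble these identifications into a commutative diagram between the given sequence and $(*)$, with the identity map in the middle column and the induced isomorphisms on the outer terms, verifying that the two squares commute (which is immediate, since the middle map is the identity and the outer maps are just the identifications just established). This shows the given sequence is isomorphic to $(*)$.

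There is really no serious obstacle here; the only point requiring a little care is the direction of the argument — one must remember that $t_\T X$ is defined as a submodule of \emph{this particular} $X$, so that the word ``isomorphic'' in the statement is in fact witnessed by the identity on $X$ together with the equality $X'=t_\T X$ of subobjects, rather than by some abstract isomorphism. Making that explicit in the diagram is the one place to be slightly careful.
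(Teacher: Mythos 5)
Your proof is correct and follows essentially the same route as the paper: one inclusion $X'\subseteq t_\T X$ from the maximality defining $t_\T X$, and the other from the fact that a map from $t_\T X\in\T$ to $X''\in\T^\perp$ must vanish (the paper phrases this second step contrapositively, saying a strict inclusion would force $X''\notin\T^\perp$). Your explicit remark that the isomorphism is witnessed by the identity on $X$ is a fine, if optional, elaboration.
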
 

\begin{proof} Viewing $X'$ as a submodule of $X$, it must be contained in
  $t_\T X$. If the containment were strict, then 
$X''$ would not lie in $\mathcal T^\perp$. The result follows. \end{proof}

We can now prove the following theorem:

\begin{thm}\label{rev} The map $\T \to \T^\perp$ is an inclusion-reversing bijection
from torsion classes to torsion free classes. Its inverse is given by the
map $\F \to \uperp\F$. \end{thm}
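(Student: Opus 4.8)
The plan is to check that the two maps $\T\to\T^\perp$ and $\F\to\uperp\F$ are each well defined, that each reverses inclusions, and that they are mutually inverse. Well-definedness is already in hand: $\uperp\F$ is a torsion class by Proposition~\ref{p2}, and $\T^\perp$ is a torsion free class by the remark above that $\mathcal C^\perp$ is always a torsion free class. Inclusion-reversal is immediate from the definitions, since enlarging $\T$ can only shrink the class of modules admitting no nonzero map into $\T$, and symmetrically for $\F$. So the content of the theorem is the pair of identities $\uperp(\T^\perp)=\T$ for every torsion class $\T$ and $(\uperp\F)^\perp=\F$ for every torsion free class $\F$.

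For the first identity I would note that the inclusion $\T\subseteq\uperp(\T^\perp)$ is formal: any $M\in\T$ has $\Hom(M,Y)=0$ for every $Y\in\T^\perp$ by the very definition of $\T^\perp$. For the reverse inclusion I would use the canonical short exact sequence $(*)$. Given $X\in\uperp(\T^\perp)$, we have already shown $X/t_\T X\in\T^\perp$, so the quotient map $X\onto X/t_\T X$ belongs to $\Hom(X,X/t_\T X)=0$; being surjective and zero, it forces $X/t_\T X=0$, i.e. $X=t_\T X\in\T$.

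For $(\uperp\F)^\perp=\F$ I would argue in the mirror-image way. Again $\F\subseteq(\uperp\F)^\perp$ is formal. For the other inclusion, take $Y\in(\uperp\F)^\perp$ and invoke the dual of $(*)$ --- legitimate since, as remarked, the theory of torsion free classes runs completely parallel --- giving a short exact sequence $0\to Y'\to Y\to Y''\to 0$ with $Y'\in\uperp\F$ and $Y''\in\F$. Then the inclusion $Y'\into Y$ lies in $\Hom(Y',Y)=0$ (because $Y'\in\uperp\F$ and $Y\in(\uperp\F)^\perp$), so $Y'=0$ and $Y\cong Y''\in\F$. Alternatively, this half can be deduced by applying the first half to the opposite algebra $A^{\mathrm{op}}$ and transporting along the duality $\Hom_k(-,k)$, which interchanges torsion classes with torsion free classes.

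I do not expect a serious obstacle: once $(*)$ and its dual are available --- and they have been established above --- the theorem is an assembly of known pieces. The one point to keep straight is which half of each identity is formal (the containments $\T\subseteq\uperp(\T^\perp)$ and $\F\subseteq(\uperp\F)^\perp$, coming purely from the definitions of the orthogonal operations) versus which half genuinely uses the existence of the torsion decomposition of an arbitrary finite-dimensional module; conflating the two is the only trap.
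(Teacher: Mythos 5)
Your argument is correct and matches the paper's proof: both establish the formal inclusion $\T\subseteq\uperp(\T^\perp)$ and then use the canonical sequence $(*)$ to show $X\in\uperp(\T^\perp)$ forces $X/t_\T X=0$, with the other composition handled by the dual argument. Your extra remarks (spelling out the dual sequence, or passing to $A^{\mathrm{op}}$) are just more explicit versions of the paper's "we see just as easily" step.
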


\begin{proof} Let $\T$ be a torsion class. We already pointed out that $\T^\perp$ is torsion free. It is
easy to see that $\uperp(\T^\perp)\supseteq \T$. For the other inclusion,
suppose $X\in \uperp(\T^\perp)$. Since $X/t_\T X\in \T^\perp$, there are
no non-zero morphisms from $X$ to $X/t_\T X$. But this must mean that
$X/t_\T X=0$, so $X=t_\T X$, and $X \in \T$.

Starting with a torsion free class $\F$, we see just as easily that the
composition of the two maps in the other order is also the identity. They
are therefore inverse bijections.
It is easy to see that they are order-reversing.
\end{proof}

From the previous theorem, together with Proposition \ref{p2},
the following corollary follows:

\begin{cor} The following pairs of subcategories are the same:
\begin{itemize}
\item $\{(\T,\T^\perp)\mid \T \in \tors A\}$,
\item $\{(\uperp \F,\F)\mid \F \in \tf A\}$,
\item $\{(\mathcal X,\mathcal Y)\mid \X=\uperp \Y, \Y=\X^\perp)\}$.
\end{itemize}
\end{cor}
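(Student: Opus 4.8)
The plan is to verify the equality of the three sets by showing that each description in the list produces the same collection of pairs $(\X,\Y)$, leaning almost entirely on Theorem~\ref{rev} and Proposition~\ref{p2}. First I would record what Theorem~\ref{rev} gives us: the maps $\T\to\T^\perp$ and $\F\to\uperp\F$ are mutually inverse inclusion-reversing bijections between $\tors A$ and $\tf A$, so in particular $\uperp(\T^\perp)=\T$ for every torsion class $\T$ and $(\uperp\F)^\perp=\F$ for every torsion free class $\F$. This already matches up the first two sets: given $\T\in\tors A$, set $\F=\T^\perp$, which is a torsion free class (Proposition~\ref{p2} plus the dual remark), and then $\uperp\F=\uperp(\T^\perp)=\T$, so $(\T,\T^\perp)=(\uperp\F,\F)$ lies in the second set; conversely, given $\F\in\tf A$, set $\T=\uperp\F$, a torsion class by Proposition~\ref{p2}, and then $\T^\perp=(\uperp\F)^\perp=\F$, so $(\uperp\F,\F)=(\T,\T^\perp)$ lies in the first set. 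Hence the first two sets coincide.

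Next I would show the third set equals the first two. The containment of the third set in (say) the first is immediate: if $\X=\uperp\Y$ and $\Y=\X^\perp$, then $\X$ is a torsion class by Proposition~\ref{p2}, and $\X^\perp=\Y$, so $(\X,\Y)=(\X,\X^\perp)$ is a pair of the first type. For the reverse containment, take $(\T,\T^\perp)$ from the first set; I must check the two fixed-point equations $\T=\uperp(\T^\perp)$ and $\T^\perp=\T^\perp$. The second is a tautology, and the first is exactly the statement $\uperp(\T^\perp)=\T$ supplied by Theorem~\ref{rev}. So every pair in the first set satisfies the symmetric conditions defining the third set, and we are done.

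I do not anticipate a genuine obstacle here: the statement is essentially a repackaging of Theorem~\ref{rev} into a more symmetric form, and the only mild subtlety is being careful that the third description is stated \emph{a priori} without reference to which of $\X,\Y$ is a torsion class versus torsion free class — but Proposition~\ref{p2} (and its torsion-free dual, noted just before the proposition on maximum submodules) resolves that, since $\uperp(-)$ always lands in $\tors A$ and $(-)^\perp$ always lands in $\tf A$. The one thing worth double-checking in writing is that the corollary really does follow "from the previous theorem, together with Proposition~\ref{p2}" as claimed, i.e.\ that I never secretly need more than the displayed identities; a quick pass confirms the argument uses only $\uperp(\T^\perp)=\T$, $(\uperp\F)^\perp=\F$, and the closure statements, all of which are available.
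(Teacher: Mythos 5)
Your argument is correct and is exactly the route the paper intends: the corollary is stated as an immediate consequence of Theorem~\ref{rev} together with Proposition~\ref{p2} (and its torsion-free dual), which is precisely what you use — the identities $\uperp(\T^\perp)=\T$ and $(\uperp\F)^\perp=\F$ plus the fact that $\uperp(-)$ lands in $\tors A$ and $(-)^\perp$ in $\tf A$. Your write-up simply makes explicit the bookkeeping that the paper leaves to the reader.
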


\section{Posets and lattices}

A possible reference for basis material on lattices is \cite{Gr}.

A poset is a partially ordered set. In a poset, we say that
$x$ covers $y$ if $x$ is greater than $y$ and there is no element
$z$ such that $x>z>y$. In this case we write $x\gtrdot y$. 

A lattice is a poset in which any two elements $x$ and $y$ have a
unique least upper bound (their ``join'') denoted $x\vee y$, and a
unique greatest lower bound (their ``meet'') denoted
$x\wedge y$.

A complete lattice is a lattice such that any subset $S$ of $L$
has a unique least upper bound, which we denote either
$\bigvee_{x\in S} x$ or $\bigvee S$, and a
unique greatest lower bound, which we denote $\bigwedge_{x\in S} x$
or $\bigwedge S$.

A finite lattice is necessarily complete. The perspective taken in this
note is that the desirable infinite generalization of finite lattices are the
complete lattices. 

A complete lattice necessarily has a minimum element $\hat 0$ (the meet
of all the elements of $L$) and similarly a
maximum element $\hat 1$. 

\section{Torsion classes form a complete lattice}

The poset $\tors A$ clearly has a meet operation given by intersection,
since the intersection of two torsion classes again satisfies the
defining properties of a torsion class. The same is true for meets
of arbitrary collections of torsion classes, for the same reason.

To see the other lattice operation, there are three approaches which all
work.
Since the left perpendicular/right perpendicular operations are
order-reversing bijections between torsion-classes and torsion-free classes,
we have that
$$\bigvee_{\T\in S} \T = \uperp \left(\bigwedge_{\T\in S} \T^\perp\right).$$
Since the $\bigwedge$ on the righthand side exists (being given by intersection), so does the $\bigvee$ on the lefthand side.

We can also define the join operation in $\tors A$
implicitly. Any poset with a maximum element and a
$\bigwedge$ also has a $\bigvee$, which can be defined as follows:
$$\bigvee_{\T\in S} \T= \bigwedge_{\{\Y\in \tors A \,\mid\, 
\,\forall \T\in S,\, \Y\supseteq\T \}} \Y$$

Finally, we can also describe the join explicitly using Proposition \ref{p1}:

$$\bigvee_{\T \in S} \T = T\left(\bigcup_{\T\in S} \T\right)$$

We therefore have the following result:

\begin{proposition} $\tors A$ is a complete lattice.
\end{proposition}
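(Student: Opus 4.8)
The plan is to establish that $\tors A$ is a complete lattice by exhibiting both a meet and a join for every subset $S \subseteq \tors A$. The meet is essentially free: I would first observe that if $\{\T_i\}_{i \in I}$ is any collection of torsion classes, then $\bigcap_{i \in I} \T_i$ is again closed under quotients and extensions (a quotient of a module in every $\T_i$ lies in every $\T_i$; likewise the middle term of a short exact sequence whose outer terms lie in every $\T_i$ lies in every $\T_i$), hence is a torsion class. It is clearly the greatest lower bound in the inclusion order, so $\bigwedge_{i \in I} \T_i = \bigcap_{i \in I} \T_i$.

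For the join, I would invoke the general lattice-theoretic fact already recorded in the text: a poset that has a maximum element and all meets automatically has all joins, via $\bigvee_{\T \in S}\T = \bigwedge\{\Y \in \tors A \mid \Y \supseteq \T \text{ for all } \T \in S\}$. The set over which this meet is taken is nonempty because $\mod A$ itself is a torsion class containing every $\T \in S$, and the meet exists by the previous paragraph; one then checks routinely that the resulting element is indeed the least upper bound of $S$. Alternatively — and I would mention this for concreteness — the join can be described explicitly as $T\!\left(\bigcup_{\T \in S}\T\right)$ using Proposition \ref{p1}, which tells us this is the smallest torsion class containing $\bigcup_{\T \in S}\T$ and hence the least upper bound of $S$; or dually as $\uperp\!\left(\bigwedge_{\T \in S}\T^\perp\right)$ using Theorem \ref{rev}. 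All three descriptions agree, since each is characterized as the least upper bound.

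Since every subset of $\tors A$ has both a meet and a join, $\tors A$ is a complete lattice, and this completes the proof. There is essentially no obstacle here: the only point requiring any care is the verification that an arbitrary intersection of torsion classes is a torsion class, and this is immediate from the fact that the two closure conditions are each "pointwise" statements preserved under intersection. Everything else is either the cited general lattice fact or the already-established results of the preceding sections, so the proof is a short assembly rather than a new argument.
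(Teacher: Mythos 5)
Your proposal is correct and follows the paper's own argument essentially verbatim: meets are arbitrary intersections, and the join is obtained either implicitly (maximum element plus all meets), explicitly as $T\left(\bigcup_{\T\in S}\T\right)$ via Proposition \ref{p1}, or via the bijection with torsion-free classes. Nothing is missing; this is the same proof.
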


\section{join irreducible elements in lattices}

An element $x$ of a lattice $L$ is called join irreducible if it cannot
be written as the join of two elements both strictly smaller than it, and
it is also not the minimum element of the lattice.
Especially for finite lattices, the join irreducible
elements can be viewed as ``building blocks'' of the lattice.

\begin{proposition} In a finite lattice $L$, any element is the join of
the join irreducible elements below it. \end{proposition}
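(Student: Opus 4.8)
The plan is to argue by induction on the finite lattice $L$; concretely, one can induct on the cardinality of the principal order ideal $\{y\in L\mid y\le x\}$, or equivalently exploit that a finite poset has no infinite strictly descending chains, so that Noetherian induction is available. Fix $x\in L$ and write $J_x$ for the set of join irreducible elements $j$ of $L$ with $j\le x$. Since every element of $J_x$ lies below $x$, the element $\bigvee J_x$ (which exists, $L$ being finite) satisfies $\bigvee J_x\le x$; thus the entire content of the proposition is the reverse inequality $x\le\bigvee J_x$.

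Before the induction I would dispose of the two trivial cases. If $x=\hat 0$, then $J_x=\varnothing$, since nothing lies strictly below $\hat 0$ and $\hat 0$ is excluded from the join irreducibles by definition; interpreting the empty join as $\hat 0$ (valid in any lattice with a least element), we get $\bigvee J_x=\hat 0=x$. If $x$ is itself join irreducible, then $x\in J_x$, so $x\le\bigvee J_x$ is immediate.

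For the inductive step, assume $x$ is neither $\hat 0$ nor join irreducible. By the definition of join irreducibility we may then write $x=y\vee z$ with $y<x$ and $z<x$. Both $y$ and $z$ lie strictly below $x$, so the induction hypothesis applies and gives $y=\bigvee J_y$ and $z=\bigvee J_z$. Every join irreducible element below $y$ is in particular below $x$, hence $J_y\subseteq J_x$, and likewise $J_z\subseteq J_x$; therefore $y=\bigvee J_y\le\bigvee J_x$ and $z=\bigvee J_z\le\bigvee J_x$. Joining these two inequalities yields $x=y\vee z\le\bigvee J_x$, which completes the induction and the proof.

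The argument is almost entirely bookkeeping, and the only place where care is genuinely needed is the status of the minimum element: one must remember that $\hat 0$ is deliberately not counted as join irreducible, so that for $x=\hat 0$ the assertion "$x$ is the join of the join irreducible elements below it" degenerates to the vacuous claim that $\hat 0$ is the join of the empty set. Apart from that subtlety, the proof uses nothing beyond the definition of join irreducibility and the finiteness (hence well-foundedness) of $L$.
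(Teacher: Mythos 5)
Your proof is correct and is essentially the same argument as the paper's: the paper phrases it as a minimal-counterexample argument, which is just your well-founded induction in contrapositive form, with the same key step of writing a non-join-irreducible $x$ as $y\vee z$ with $y,z<x$ and combining their decompositions. Your explicit treatment of $x=\hat 0$ via the empty-join convention is a small point of care that the paper leaves implicit, but it does not change the substance.
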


\begin{proof} Suppose $x\in L$ were a minimal counter-example to the
statement of the proposition.
If $x$ were join irreducible, it is obviously not a counter-example,
so suppose that
it is not join irreducible. We can therefore write $x=y\vee z$ with
$y,z<x$. By the assumption that $x$ is a minimal counter-example,
$y$ and $z$ can each be written as a join of join irreducible elements.
Joining together these two expressions, we get an expression for $x$
as a join of join irreducible elements, contradicting our assumption that
$x$ was a counter-example.
\end{proof}

%Let $x$ be a minimal counter-example. Let $y$ be the
%join of the join irreducibles less than $x$.
%Let $z<x$. Then the join irreducibles below $z$ are contained among
%the join irreducibles less than $x$, so their join is less than or equal to
%$y$. Since $x$ is by assumption a minimal counter-example, tt follows that
%$$\bigvee{z<x}z = y< x.$$
%But this shows that $x$ itself is join irreducible. 

%Any element less than
%$x$ lies above a subset of the join irreducibles below $x$, 

The situation for infinite lattices is more complicated. It can still be
interesting to consider join irreducible elements defined as above.
However, for our purposes, the following definition is more important.
We say that $x\in L$ is
completely join irreducible if $\bigvee_{y<x}y < x$. Equivalently, there
is an element, which we denote $x_*$ such that $y<x$ if and only if
$y\leq x_*$.
Note that the minimum element $\hat 0$ is not considered to be completely join irreducible.
We write $\cji L$ for the completely join irreducible elements of $L$.

Note that for a finite lattice, $x\in L$ is join irreducible if and
only if it is completely join irreducible. However, this is not
true in infinite lattices. 
For example, consider $[0,1]$, as an interval in $\mathbb R$
with the usual order. Every element except $0$ is join irreducible, but
there are no completely join irreducible elements.
This suggests that neither of these notions is necessarily all that useful for
general infinite lattices. 
However, for the lattices we
are interested in, the notion of completely join irreducible elements will
turn out to be very important. 

Let us return to consider the torsion classes of the Kronecker quiver presented
in Example \ref{ex3}. 
Of the torsion classes in the interval isomorphic to a Boolean lattice,
the elements covering the minimum are completely join irreducible, while the
others are not. Among the other torsion classes, all are completely
join irreducible except the minimum and maximum elements. The unique torsion
class which is join irreducible but not completely join irreducible is the
one composed of all the preinjective modules, labelled $\mathcal I$ in the diagram. 
It is the join of the (infinite) set of
torsion classes generated by preinjective modules, but it is not the join of
any finite set of torsion classes strictly contained in it. 

There are also dual notions of meet irreducible and
completely meet irreducible elements of a lattice. We write $\cmi L$ for the completely meet irreducible elements of $L$.
For $m$ a completely meet irreducible element, we write $m^*$ for the unique element which covers it.

\section{Completely join irreducible torsion classes}

Recall that a module $B$ is called a brick if every non-zero endomorphism
of $B$ is invertible. A brick is necessarily indecomposable, since projection
onto a proper indecomposable summand is a non-invertible
endomorphism. 
Write $\br A$ for the $A$-modules which are bricks.

In the case of the Kronecker quiver, the bricks are the indecomposable
modules from the preprojective and preinjective components, together
with the quasi-simple module at the bottom of each tube.

In this section, we shall show an important result by Barnard--Carroll--Zhu \cite[Theorem 1.5]{BCZ}, that
there is a bijection between $\br A$ and the completely join irreducible elements of
$\tors A$.

The same result holds for $\tf A$, and, by the order-reversing
bijection between $\tf A$ and $\tors A$, the same result also holds for the
meet irreducible elements of $\tors A$ and $\tf A$. For simplicity, we
will focus our attention on $\tors A$ and its completely join irreducible
elements; everything we prove has analogues in the other settings.

The following lemma says that a torsion class is characterized by
the bricks it contains.

\begin{lemma} \label{lone} Let $\T\in \tors A$. Then $$\T=\bigvee_{B\in \T\cap \br \!A}
T(B)$$ \end{lemma}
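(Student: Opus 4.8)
The plan is to show the two inclusions separately, with the nontrivial direction being that every $\T$ is contained in $\bigvee_{B\in\T\cap\br A} T(B)$. The inclusion $\supseteq$ is immediate: each $B\in\T\cap\br A$ lies in $\T$, so $T(B)\subseteq\T$ by Proposition \ref{p1}, and since $\T$ is itself a torsion class (hence closed under the join operation, which by the explicit formula for joins is just $T$ of the union), the join of all these $T(B)$ is still contained in $\T$.

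For the reverse inclusion, I would argue by induction on the length (number of composition factors) of a module $M\in\T$. The key claim to extract is: \emph{every nonzero module $M\in\T$ has a quotient which is a brick lying in $\T$.} Granting this claim, the induction runs as follows. Let $M\in\T$ be nonzero and pick a brick quotient $M\onto B$ with $B\in\T$; let $K$ be the kernel, so $K\in\T$ as well (it is a submodule of $M$, but more to the point $\T$ is a torsion class and\ldots actually one needs $K\in\T$, which is not automatic from submodule-closure). Here I would instead take $K = t_{\T}(K)$ if necessary, or better: observe that since $K$ has smaller length, by induction $K\in\bigvee_{B'\in\T\cap\br A} T(B')$; and $B\in T(B)\subseteq\bigvee_{B'\in\T\cap\br A}T(B')$; so the extension $0\to K\to M\to B\to 0$ shows $M$ lies in this join, since a join of torsion classes is a torsion class and hence closed under extensions. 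The base case (length zero, i.e. $M=0$) is trivial, and length-one modules are simple, hence bricks, hence already handled.

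The real obstacle is the claim that \emph{every nonzero $M\in\T$ admits a brick quotient lying in $\T$}. I would prove this by a minimality argument: among all nonzero quotients of $M$ (all of which lie in $\T$, since $\T$ is closed under quotients), choose one, call it $N$, of minimal length. I claim $N$ is a brick. If not, there is a non-invertible nonzero endomorphism $f\colon N\to N$; since $N$ has finite length, $\im f$ is a proper nonzero quotient of $N$ (it is a quotient via $N\onto\im f$, and proper because $f$ is not injective — if $f$ were injective it would be an isomorphism by finite-dimensionality, contradicting non-invertibility), so $\im f$ is a nonzero quotient of $M$ of strictly smaller length, contradicting minimality. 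Hence $N$ is a brick, and $N\in\T$, proving the claim. I should double-check the one subtle point in the induction — namely that the kernel $K$ in the chosen short exact sequence is handled correctly; the cleanest fix is to note we never need $K\in\T$, only that $K$ (of smaller length) is \emph{some} module, and then realize the induction should instead be phrased on all of $\mod A$ with the statement ``if $M\in\T$ then $M\in\bigvee_{B\in\T\cap\br A}T(B)$'' — and to make the extension argument go through I apply the brick-quotient claim to $M$, get $M\onto B$ with kernel $K\in\T$ because $K = \ker(M\to B)$ sits inside $M\in\T$ and is actually a submodule; submodule-closure is not a torsion class axiom, so instead I use that $K\hookrightarrow M$ and consider $t_\T K$... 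This is exactly the gap to be careful about, and I expect the paper resolves it by choosing the brick quotient $B$ so that $K$ is automatically in $\T$, e.g. by taking $B = M/M'$ where $M'$ is a \emph{maximal} proper submodule with $M/M'\in\T$ minimal — then one shows $M'\in\T$ using that $M'$ is an extension inside $\T$. Pinning down this choice is the main content.
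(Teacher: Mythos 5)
Your easy inclusion $\supseteq$ is fine, and your claim that every nonzero $M\in\T$ has a quotient which is a brick lying in $\T$ is correct, proved by exactly the right minimal-length argument. But the induction step contains a genuine gap, which you flag yourself and never close: from a surjection $M\onto B$ with $B$ a brick in $\T$ and kernel $K$, you want to conclude $M\in\U:=\bigvee_{B'\in\T\cap\br A}T(B')$ by extension-closedness of $\U$, and for that you need $K\in\U$; but $\U\subseteq\T$, so this forces $K\in\T$, which is not available — torsion classes are not closed under submodules, and your inductive hypothesis only applies to modules of smaller length that lie in $\T$. None of the proposed repairs is carried out, and the last one (choose $M'$ maximal and ``show $M'\in\T$ using that $M'$ is an extension inside $\T$'') is unjustified as stated: nothing guarantees that some surjection from $M$ onto a brick of $\T$ has kernel in $\T$, and you neither prove this nor reduce the lemma to it.

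The paper closes exactly this gap by arranging for the submodule term of the short exact sequence to be a quotient of $M$ as well. Take $X\in\T\setminus\U$ of minimal dimension; it cannot be a brick (bricks of $\T$ lie in $\U$), so it has a nonzero non-invertible endomorphism $f$, and one uses $0\to f(X)\to X\to X/f(X)\to 0$. Both $f(X)$ and $X/f(X)$ are proper quotients of $X$, hence lie in $\T$ and have strictly smaller dimension, hence lie in $\U$ by minimality of $X$; since $\U$ is a torsion class and therefore closed under extensions, $X\in\U$, a contradiction. So the missing idea is not a cleverer choice of brick quotient but the observation that the image of an endomorphism is simultaneously a submodule and a quotient of $X$, which lets the minimality argument apply to both terms of the sequence at once. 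As written, your argument establishes the brick-quotient claim but not the lemma.
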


\begin{proof} Let us write
$$\mathcal U= \bigvee_{B\in \T\cap \br \!A}
T(B)$$
Clearly, $\mathcal U \subseteq \mathcal T$. Now suppose that we have some
$X$ which is in $\mathcal T$ but not in $\mathcal U$, and among such $X$,
choose one of minimal dimension. $X$ is clearly not a brick, since otherwise
it would be contained in $\U$. Thus it has a non-zero
non-invertible endomorphism
$f$. We
get a short exact sequence:
$$ 0 \rightarrow f(X) \rightarrow X \rightarrow X/f(X) \rightarrow 0$$
Since $X\in \T$, we have $X/f(X)\in\mathcal T$, and since the dimension of
$X/f(X)$ is less than that of $X$, it follows that $X/f(X) \in \U$.

Similarly, though, since $f(X)$ is also a quotient of $X$, we know
$f(X)\in\T$ and since $f(X)$ is in fact a proper quotient of $X$, the minimality assumption on $X$ then implies that  $f(X)\in \U$. We now see that $X$ is the extension
of two objects from $\U$, so it is itself in $\U$, contrary to our
assumption.\end{proof}

We also need the following lemma due to Sota Asai.

\begin{lemma}[{\cite[Lemma 1.7(1)]{Asai}}] \label{slem} If $X\in T(B)$, then either $X$ admits a surjection onto $B$ or $\Hom(X,B)=0$.
\end{lemma}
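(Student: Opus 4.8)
The plan is to argue by induction on the length of a filtration witnessing $X \in T(B)$. Recall that $X \in T(B)$ means $X$ admits a filtration $0 = X_0 \subset X_1 \subset \dots \subset X_r = X$ with each $X_i/X_{i-1}$ a quotient of (a direct sum of copies of) $B$. First I would dispose of the base case $r \le 1$: here $X$ itself is a quotient of $B^{\,n}$ for some $n$, and one checks directly that a quotient of $B^{\,n}$ either surjects onto $B$ or has no nonzero map to $B$ --- this reduces, via the brick hypothesis on $B$, to the observation that any map $B^{\,n} \to B$ is either zero on each summand or an isomorphism on some summand, and the corresponding statement passes to quotients using the brick property (any nonzero endomorphism of $B$ is invertible, and more generally any nonzero map $B \to B$ is an iso, so images of maps into $B$ are either $0$ or all of $B$).

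For the inductive step, consider the short exact sequence
\[
0 \rightarrow X_{r-1} \rightarrow X \rightarrow X/X_{r-1} \rightarrow 0,
\]
where $X_{r-1} \in T(B)$ via the truncated filtration (so the inductive hypothesis applies to it) and $X/X_{r-1}$ is a quotient of some $B^{\,n}$ (so the base case applies to it). Apply $\Hom(-,B)$ to get the left-exact sequence
\[
0 \rightarrow \Hom(X/X_{r-1},B) \rightarrow \Hom(X,B) \rightarrow \Hom(X_{r-1},B).
\]
If $\Hom(X,B) = 0$ we are done, so assume there is a nonzero $f \colon X \to B$. If $f$ does not kill $X_{r-1}$, then its restriction to $X_{r-1}$ is a nonzero element of $\Hom(X_{r-1},B)$, so by the inductive hypothesis $X_{r-1}$ surjects onto $B$; composing that surjection with the inclusion and then... here one must be slightly careful: a surjection $X_{r-1} \onto B$ does not obviously extend to $X$. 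The cleaner route is to instead note that $\im(f) \subseteq B$ is a submodule of the brick $B$; I would argue that $\im(f) = B$. Indeed $\im(f)$ is a quotient of $X \in T(B)$, hence lies in $T(B)$, and it is also a submodule of $B$; one shows a nonzero submodule of $B$ that is also a quotient of an object of $T(B)$ must be all of $B$ --- again leveraging that $B$ is a brick (if $U \subsetneq B$ were such a proper submodule, the composite $B \onto$ (some quotient of $B$ mapping onto $U$)... ). In fact the slick statement is: if $0 \ne f \colon X \to B$ with $X \in T(B)$, then $f$ is surjective. To see this, let $U = \im(f) \subseteq B$; if $U \ne B$, pick a simple quotient of $B$ not factoring through... this is where the real content sits.

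\textbf{Main obstacle.} The crux --- and the step I expect to be hardest --- is showing that a nonzero map from an object of $T(B)$ into the brick $B$ is automatically \emph{surjective}; equivalently, that $B$ has no proper submodule which receives a surjection from an object of $T(B)$. The base-case analysis (quotients of $B^{\,n}$ mapping to $B$) already contains the seed of this: a quotient $q \colon B^{\,n} \onto Q$ composed with a map $g \colon Q \to B$ gives $g \circ q \colon B^{\,n} \to B$, which on at least one summand is a nonzero endomorphism of $B$, hence an isomorphism, forcing $g$ to be surjective on that factor and hence surjective. For the general case one propagates this through the filtration: if $f \colon X \to B$ is nonzero, some subquotient $X_i/X_{i-1}$ must map nonzero to $B$ (by left-exactness of $\Hom(-,B)$ applied repeatedly), that subquotient is a quotient of $B^{\,n}$, so by the base case it surjects onto $B$, and then one must lift this to conclude $X$ itself surjects onto $B$ --- the lifting being handled by pulling back along the surjection $X \onto X/(\text{earlier part})$ when the nonzero subquotient is the top one, and by an extension/pushout argument otherwise. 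I would structure the induction to always arrange that the ``witnessing'' nonzero subquotient is at the top of the filtration (truncating so that $X_{r-1} = \ker$ of the composite $X \to B \to$ image-related quotient, if necessary), which makes the lift immediate and isolates the only genuine difficulty to the base case.
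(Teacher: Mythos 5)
Your overall strategy --- filter $X$, locate where a given nonzero $f\colon X\to B$ first becomes nonzero, and exploit the brick property on the resulting map out of a quotient of $B^{\,n}$ --- is exactly the approach the paper takes, and your base case (any nonzero map from a quotient of $B^{\,n}$ to $B$ is surjective, because the composite $B^{\,n}\to B$ is nonzero, hence invertible, on some summand) is correct and is the only place the brick hypothesis is needed. The genuine gap is the step you yourself flag as ``where the real content sits'': you never conclude that the given map $f$ is surjective. Your worry about ``lifting'' a surjection from a subquotient up to $X$ is a real obstacle if you treat that surjection abstractly --- as your own discussion of $X_{r-1}$ shows, a surjection from a submodule onto $B$ need not extend, and no pushout/extension argument or re-truncation of the filtration will manufacture an extension --- but the worry dissolves once you remember that the surjection you produce is induced by $f$ itself. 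Concretely: take $i$ minimal with $f|_{X_i}\neq 0$. Then $f|_{X_{i-1}}=0$, so $f|_{X_i}$ factors through $X_i/X_{i-1}$, which is a quotient of some $B^{\,n}$; precomposing with $B^{\,n}\onto X_i/X_{i-1}$ gives a nonzero map $B^{\,n}\to B$, invertible on some summand since $B$ is a brick, so the induced map $X_i/X_{i-1}\to B$ is surjective. Hence $\im f\supseteq f(X_i)=B$, so $f$ itself is surjective --- no induction on the filtration length, no lifting, and no rearrangement of the filtration is needed. Your strengthened base case is exactly what is used here; the spurious lifting problem arises only because you then revert to the weak form ``admits a surjection onto $B$''.

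Your alternative idea via $\im f$ can also be completed, and more easily than the simple-quotient digression you started: $\im f$ is a quotient of $X$, hence lies in $T(B)$, and it is a nonzero submodule of $B$; the bottom nonzero layer of a filtration of $\im f$ is simultaneously a quotient of $B^{\,n}$ and a submodule of $B$, and the same brick argument forces that layer to be all of $B$, whence $\im f=B$. Either way, the point that must be made explicit is that surjectivity is being established for the specific map $f$ (equivalently, for its image), not merely the existence of some surjection from some subquotient of $X$ onto $B$.
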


\begin{proof} Suppose that $f\in\Hom(X,B)$ is non-zero. Since $X$ is
filtered by quotients of $B$, we can write
$0=X_0 \subseteq X_1 \subseteq \dots \subseteq X_r=X$, with
$X_i/X_{i-1}$ isomorphic to a quotient of $B$. Consider the smallest $i$
such that $f|_{X_{i}}$ is non-zero. Since $f|_{X_{i-1}}=0$, $f$ induces a map
from $X_i/X_{i-1}$ to $B$, and thus from $B$ to $B$. Since $B$ is a brick,
this map must be surjective, so $f|_{X_i}$ is surjective, and thus $f$ is
surjective.
\end{proof}

We can now prove the main result of the section:

\begin{thm}[{\cite[Theorem 1.5]{BCZ}}]\label{cjib} The map $B\to T(B)$ is a bijection from $\br A$ to
$\cji \tors A$. \end{thm}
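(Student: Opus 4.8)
The plan is to exhibit an explicit two-sided inverse to the map $B\to T(B)$, thereby proving it is a bijection. First I would show that $T(B)$ is completely join irreducible whenever $B$ is a brick, by identifying the candidate for $T(B)_*$: namely, the torsion class generated by all bricks $B'\in T(B)$ with $B'\not\cong B$, equivalently $\bigvee_{B'\in T(B)\cap\,\br\!A,\ B'\ne B} T(B')$. Using Lemma~\ref{lone}, any torsion class $\T\subsetneq T(B)$ is the join of the $T(B')$ over bricks $B'\in\T$; so it suffices to check that $B\notin\T$ for such a proper $\T$, which forces $\T$ to be below this candidate $T(B)_*$. The key input here is Lemma~\ref{slem}: if $B\in\T$ for a torsion class $\T\subseteq T(B)$, then every $X\in T(B)$ either surjects onto $B$ or has $\Hom(X,B)=0$; one uses this together with closure of $\T$ under quotients and extensions to climb the filtration of an arbitrary $X\in T(B)$ and conclude $X\in\T$, giving $\T=T(B)$. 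This simultaneously shows $T(B)_*\ne T(B)$, i.e.\ $T(B)\in\cji\tors A$, and that $B$ is recoverable from $T(B)$.

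Next I would define the inverse map on $\cji\tors A$. Given a completely join irreducible $\T$ with unique lower cover $\T_*$, Lemma~\ref{lone} gives $\T=\bigvee_{B\in\T\cap\,\br\!A}T(B)$, and since $\T$ is join irreducible this join cannot be ``spread out'': there must exist a single brick $B\in\T$ with $T(B)\not\subseteq\T_*$, hence $T(B)=\T$ by the definition of $\T_*$ (anything strictly below $\T$ is $\le\T_*$). So every element of $\cji\tors A$ is of the form $T(B)$, proving surjectivity. For injectivity, if $T(B)=T(B')$ with $B,B'$ bricks, then $B'\in T(B)$, so by Lemma~\ref{slem} $B'$ surjects onto $B$ or $\Hom(B',B)=0$; the latter is impossible since $B'\in{}^{\!\!}$ nothing forbidding --- rather, one argues symmetrically that $B$ surjects onto $B'$ and $B'$ surjects onto $B$, and a surjection between bricks of the same (finite) dimension is an isomorphism, after first noting $\dim B=\dim B'$ because mutual surjections force equal dimension. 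Thus $B\cong B'$ and the map is injective.

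The main obstacle I anticipate is the careful bookkeeping in the step ``$\T\subseteq T(B)$ and $B\in\T$ imply $\T=T(B)$'', where one must genuinely use Lemma~\ref{slem} filtration-by-filtration: given $X\in T(B)$ with filtration $0=X_0\subset\cdots\subset X_r=X$ having $X_i/X_{i-1}$ a quotient of $B$, one wants to show $X\in\T$, and the natural induction is on $r$ (or $\dim X$). The subtlety is that a subquotient $X_i/X_{i-1}$ being a quotient of $B$ does not immediately put it in $\T$ unless $B\in\T$ --- which we have --- so in fact each $X_i/X_{i-1}\in\T$, and then closure under extensions gives $X\in\T$ directly; so this may be easier than feared, and Lemma~\ref{slem} is really only needed for the injectivity/recovery direction and for pinning down $\T_*$. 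I would double-check whether the cleanest route uses Lemma~\ref{slem} at all for complete join irreducibility, or whether the filtration argument plus Lemma~\ref{lone} suffices, and streamline accordingly.
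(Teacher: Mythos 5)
There is a genuine gap in your first step, and it sits exactly where your final paragraph proposes to ``streamline'': you never show that your candidate for $T(B)_*$ is strictly smaller than $T(B)$. What you actually prove is that each individual proper torsion subclass $\T\subsetneq T(B)$ fails to contain $B$ --- which is immediate from the minimality of $T(B)$ (Proposition \ref{p1}) and needs no lemma at all. But complete join irreducibility requires that the join of \emph{all} proper subclasses (equivalently, your $\bigvee_{B'\in T(B)\cap\br\!A,\ B'\not\cong B}T(B')$) still avoids $B$, and a join of torsion classes is not their union: it is the torsion class generated by the union, and it can contain $B$ even though no joinand does. So the sentence ``this simultaneously shows $T(B)_*\ne T(B)$'' is unjustified, and the suggestion that Lemma \ref{slem} may be dispensable here is exactly backwards: this is the one place where it is indispensable. (The Kronecker example \ref{ex3} is the cautionary tale: the preinjective torsion class $\mathcal I$ is the join of torsion classes strictly below it, so ``no proper subclass contains the relevant modules'' is not by itself an obstruction to being such a join.)

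The paper closes this gap by exhibiting a single torsion class containing every proper subclass of $T(B)$ but not $B$, namely $\uperp F(B)=\uperp B$, which is a torsion class by Proposition \ref{p2}. Indeed, by Lemma \ref{slem} a torsion class strictly contained in $T(B)$ can contain no module surjecting onto $B$ (else it would contain $B$ and hence all of $T(B)$), so all its modules have no nonzero map to $B$; thus every proper subclass lies in $T(B)\cap\uperp F(B)$, which misses $B$, and this intersection is $T(B)_*$. Your surjectivity argument is correct and matches the paper. Your injectivity paragraph is garbled at the point where you must rule out $\Hom(B',B)=0$; it is repairable: if $T(B)=T(B')$ then $B$ is filtered by quotients of $B'$, so the bottom layer of the filtration gives a nonzero map $B'\to B$, whence $B'\onto B$ by Lemma \ref{slem}, and symmetrically $B\onto B'$, forcing $B\cong B'$. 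But as written, the heart of the theorem --- that $T(B)$ is \emph{completely} join irreducible --- is not established.
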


\begin{proof}
First of all, we want to show that, for $B$ a brick,
$T(B)$ is a completely join irreducible
torsion class. This requires showing that there is a unique maximum element
among all those torsion classes strictly below $T(B)$. We claim that this
torsion class can be described as $T(B) \cap \uperp F(B)$.

Since $B\not\in \uperp F(B)$, it is clear that $T(B) \cap \uperp F(B)$ is
a torsion class strictly contained in $T(B)$.
On the other hand, any torsion class strictly contained in $T(B)$ cannot contain $B$, and thus cannot contain any module $X$
admitting a surjective map onto $B$. Thus, by Lemma~\ref{slem}, any such torsion class
must be
contained in $\uperp B$. Clearly $\uperp B \supseteq \uperp F(B)$, and the reverse inclusion follows because any element of $F(B)$ is filtered by subobjects of $B$, so if $X$ has no non-zero morphisms into $B$, it has no non-zero morphisms into any element of $F(B)$. Therefore, any torsion class properly contained in $T(B)$ is contained in $T(B) \cap \uperp F(B)$. This proves the claim, thus establishing
that $T(B)$ is a completely join irreducible torsion class. %From now on,
%we will write $T(B)_*$ for this torsion class.

On the other hand, by Lemma \ref{lone}, any torsion class can be written as
the join of $T(B)$ as $B$ runs through all bricks in the torsion class. This shows that any torsion class can be written as a join of the completely join irreducible torsion classes we have already identified (those of the form $T(B)$ for $B$ a brick) so there cannot be any completely join irreducible torsion classes not of this form.

Finally, we want to check that the map from bricks to torsion classes is
injective. Suppose that $T(B)=T(B')$, for $B$ and $B'$ two bricks.
$B'$ cannot be contained in $T(B)_*$. Thus there is a surjection from $B'$ to
$B$ by Lemma \ref{slem}. Reversing the rôles of $B'$ and $B$, there is
also a surjection from $B$ to $B'$. Therefore $B$ and $B'$ must be isomorphic.
\end{proof}

This theorem is one of the key justifications for the impression that
when considering lattices of
torsion classes, it is most appropriate to think in terms of the
complete versions of lattice-theoretic phenomena. As we saw in the example
of the Kronecker quiver, there is a join irreducible torsion class
which is not completely join irreducible, namely, the additive hull of
the preinjective
component. In accordance with Theorem \ref{cjib}, it does not correspond to any brick in $\mod A$. This raises the following
interesting question:

\begin{question} Is there any way to extend Theorem \ref{cjib} to characterize
  the join irreducible but not completely join irreducible elements of
  $\tors A$? \end{question}

The proof of the following theorem is dual to the proof of Theorem \ref{cjib}.

\begin{thm} The map $B\to F(B)$ is a bijection from
$\br A$ to $\cji \tf A.$\end{thm}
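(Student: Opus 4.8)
The plan is to mirror the proof of Theorem \ref{cjib} step by step, replacing torsion classes by torsion free classes, quotients by submodules, and surjections by injections throughout. Concretely, for a brick $B$ I would show that $F(B)$ is completely join irreducible in $\tf A$ by exhibiting a unique maximum torsion free class strictly below it, which should be $F(B)\cap (F(B))^\perp$ — the dual of the description $T(B)\cap \uperp F(B)$ appearing in the proof of Theorem \ref{cjib}. Since $B\notin (F(B))^\perp$ (as $\Hom(B,B)\neq 0$), this is a proper subclass of $F(B)$; and any torsion free class strictly contained in $F(B)$ fails to contain $B$, hence fails to contain any module admitting an injection from $B$, so by the dual of Lemma \ref{slem} it is contained in $B^\perp$, and $B^\perp = (F(B))^\perp$ because every object of $F(B)$ is filtered by quotients — wait, by submodules — of $B$, so a module receiving no nonzero map from $B$ receives no nonzero map from any object of $F(B)$. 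Hmm, I should be careful here: the dual of Lemma \ref{lone} is that a torsion free class $\F$ equals $\bigvee_{B\in\F\cap\br A} F(B)$, which says every torsion free class is a join of the $F(B)$'s, so there are no completely join irreducible torsion free classes outside this family.

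The one genuinely new ingredient I need is the dual of Lemma \ref{slem}, namely: if $X\in F(B)$ then either $X$ admits an injection from $B$ (i.e.\ $B$ embeds into $X$) or $\Hom(B,X)=0$. The proof dualizes cleanly: a nonzero $f\in\Hom(B,X)$, composed with the filtration $0=X_0\subseteq\dots\subseteq X_r=X$ of $X$ by submodules of $B$, has a largest index $i$ with $X_i\not\supseteq \im f$ — actually I would instead look at the composite $B\xrightarrow{f} X \to X/X_{i-1}$ for the smallest $i$ with $f(B)\subseteq X_i$, which lands in a submodule of $X_i/X_{i-1}$, itself a submodule of $B$; since $B$ is a brick this forces the composite $B\to X_i/X_{i-1}\hookrightarrow B$ to be an isomorphism, so $f$ is a split injection on $B$, in particular injective. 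Then injectivity of $B\to X$ follows. (I should double-check the index bookkeeping when writing this out, but the shape of the argument is forced by duality.)

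With these pieces in hand, injectivity of $B\mapsto F(B)$ is the dual of the last paragraph of the proof of Theorem \ref{cjib}: if $F(B)=F(B')$ then $B'$ does not lie in $(F(B))_* = F(B)\cap(F(B))^\perp$, so by the dual of Lemma \ref{slem} there is an embedding $B\hookrightarrow B'$, and symmetrically an embedding $B'\hookrightarrow B$; composing gives an injective, hence (by finite-dimensionality) bijective, endomorphism of $B$, so $B\cong B'$. Alternatively — and this is the shortest route — one can bypass almost everything by invoking the order-reversing bijection $\T\mapsto\T^\perp$ of Theorem \ref{rev}, which restricts to an anti-isomorphism of lattices $\tors A \cong \tf A$ and hence a bijection $\cmi\tors A \cong \cji\tf A$; combining this with the dual of Theorem \ref{cjib} (the map $B\mapsto$ the completely meet irreducible torsion class cut out by $B$) yields the statement. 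The main obstacle, such as it is, is purely bookkeeping: making sure the direction of every arrow, inclusion, and perpendicular is flipped consistently, since the underlying category $\mod A$ is not self-dual and one must genuinely run the argument in $\tf A$ rather than appeal to a formal duality of the module category.

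\begin{thm} The map $B\to F(B)$ is a bijection from
$\br A$ to $\cji \tf A.$\end{thm}

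\begin{proof} This follows by dualizing the proof of Theorem \ref{cjib}, as sketched above; alternatively, combine Theorem \ref{rev} with the meet-irreducible analogue of Theorem \ref{cjib}. \end{proof}
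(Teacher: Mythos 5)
Your overall route is the one the paper intends (its proof of this theorem is literally ``dual to Theorem \ref{cjib}''), and your dual of Asai's Lemma \ref{slem} and the final injectivity argument are sound in shape. However, one dualization step is wrong, and it is the load-bearing one: the unique maximal torsion free class strictly below $F(B)$ is not $F(B)\cap (F(B))^\perp$, and the equality $B^\perp=(F(B))^\perp$ you use to justify it is false. Objects of $F(B)$ are filtered by \emph{submodules} of $B$, and a nonzero map out of a submodule of $B$ does not yield a nonzero map out of $B$ (one can postcompose a map \emph{into} a submodule with the inclusion into $B$, which is what proves $\uperp B=\uperp F(B)$ in the paper; there is no analogous way to precompose). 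Concretely, for $A$ the path algebra of $1\leftarrow 2$ and $B=P_2=[11]$, one has $F(B)=\langle [10],[11]\rangle$; then $S_1=[10]$ lies in $B^\perp$ (as $\Hom(P_2,S_1)=0$) but not in $(F(B))^\perp$ (as $\Hom(S_1,S_1)\neq 0$), so $F(B)\cap (F(B))^\perp=0$, whereas the maximal torsion free class strictly below $F(B)$ is $\langle [10]\rangle$. The correct dual of $T(B)\cap \uperp F(B)$ is $F(B)\cap (T(B))^\perp$, together with the identity $(T(B))^\perp=B^\perp$ (a nonzero map out of a quotient of $B$ pulls back to a nonzero map out of $B$), which is the true dual of $\uperp F(B)=\uperp B$. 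With $F(B)\cap B^\perp$ in place of your candidate, your argument goes through verbatim: a torsion free class strictly below $F(B)$ misses $B$, hence (being closed under submodules) misses every module into which $B$ embeds, hence by the dual Asai lemma lies in $B^\perp$, hence in $F(B)\cap B^\perp$, which is a torsion free class properly contained in $F(B)$ since $\Hom(B,B)\neq 0$. Note the same slip infects your injectivity step: $B'\notin F(B)\cap (F(B))^\perp$ does not give $\Hom(B,B')\neq 0$, whereas $B'\notin F(B)\cap B^\perp$ does.

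A further caution about your proposed shortcut: in the paper's logical order, Corollary \ref{cmib} (the bijection $B\to\uperp F(B)$ onto $\cmi\tors A$) is \emph{deduced} from the present theorem together with Theorem \ref{rev}, so invoking a ``meet-irreducible analogue of Theorem \ref{cjib}'' here would be circular unless you prove it independently --- and doing so requires essentially the same dual argument you are trying to avoid. The honest content of the theorem is precisely the dualized argument, with the corrected perpendicular as above.
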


Then, applying Theorem \ref{rev}, we deduce:

\begin{cor} \label{cmib} The map $B\to \uperp F(B)$ is a bijection from $\br A$ to
$\cmi \tors A$.\end{cor}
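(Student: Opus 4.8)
The final statement is Corollary~\ref{cmib}: the map $B \to \uperp F(B)$ is a bijection from $\br A$ to $\cmi \tors A$. The plan is to obtain this purely formally by composing two facts already established: the bijection $B \to F(B)$ from $\br A$ to $\cji \tf A$ (the theorem immediately preceding the corollary), and the inclusion-reversing bijection $\T \to \T^\perp$ from $\tors A$ to $\tf A$ of Theorem~\ref{rev}, whose inverse is $\F \to \uperp \F$.

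First I would observe that an inclusion-reversing bijection between two posets carries completely join irreducible elements of one to completely meet irreducible elements of the other: the defining property of $x$ being completely join irreducible, namely that $\bigvee_{y<x} y < x$, translates under an order-reversing bijection $\phi$ into $\bigwedge_{z > \phi(x)} z > \phi(x)$, which is exactly the statement that $\phi(x)$ is completely meet irreducible. (Here one uses that $\phi$ is a \emph{complete} lattice anti-isomorphism, so it interchanges arbitrary joins and meets; this is implicit in the fact, established in the excerpt, that both $\tors A$ and $\tf A$ are complete lattices and $\T \to \T^\perp$ is a bijection reversing inclusion, hence reversing all joins and meets.) Applying this with $\phi$ the inverse map $\F \to \uperp \F$ from $\tf A$ to $\tors A$, we get that $\uperp$ restricts to a bijection from $\cji \tf A$ to $\cmi \tors A$.

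Then I would simply compose: $B \to F(B)$ is a bijection $\br A \to \cji \tf A$, and $\F \to \uperp \F$ is a bijection $\cji \tf A \to \cmi \tors A$, so $B \to \uperp F(B)$ is a bijection $\br A \to \cmi \tors A$, as claimed. That is the whole argument.

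The only real content is the lemma that a complete lattice anti-isomorphism interchanges $\cji$ and $\cmi$, and its one subtle point — the one I would flag as the main thing to get right rather than a genuine obstacle — is making sure the bijection of Theorem~\ref{rev} is known to be \emph{order}-reversing (stated there) and hence, being a bijection of complete lattices, automatically reverses arbitrary joins and meets, so that the displayed inequality really does transport correctly. Since all of this is formal, I would keep the proof to two or three sentences, citing Theorem~\ref{rev} and the preceding theorem, and perhaps spelling out the $\cji \leftrightarrow \cmi$ transport in one line.
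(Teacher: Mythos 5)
Your proposal is correct and matches the paper's argument: the paper deduces Corollary~\ref{cmib} from the preceding theorem (the bijection $B\to F(B)$ from $\br A$ to $\cji\tf A$) precisely by applying the inclusion-reversing bijection of Theorem~\ref{rev}, exactly as you do. Your explicit remark that an order anti-isomorphism of complete lattices carries completely join irreducible elements to completely meet irreducible elements is the detail the paper leaves implicit, and it is correct.
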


From Theorem \ref{cjib}, Corollary \ref{cmib}, and their proofs, we can say
that associated to a brick $B$, there are four torsion classes, arranged
as in the following diagram, where the join of the two torsion classes on the middle layer equals the top torsion class, and their meet equals the bottom torsion class.

$$\begin{tikzpicture} \node (a) at (0,0) {$(\uperp F(B))^*$};
\node (b) at (1,-1) {$T(B)$};
\node (c) at (-1,-1) {$\uperp F(B)$};
\node (d) at (0,-2) {$T(B)_*$};
\draw[dotted] (a) -- (b);
\draw[dotted] (c) -- (d);
\draw (a)--(c);
\draw (b) --(d);
\end{tikzpicture}$$

In the diagram, the edges drawn as undashed lines are cover relations in the lattice of
torsion classes. The edges drawn using dashed lines are weak poset
relations. In particular, the torsion classes at
the endpoints of a dotted line may be equal.
Also, the pair of torsion classes not connected by a line are not
comparable in the lattice of torsion classes. We follow these conventions
in subsequent diagrams.

  \section{Parenthesis: $\tau$-tilting}

We include the following section because it makes the link to another topic
of current research related to torsion classes, which was also
presented during the spring school. A possible reference is the survey by Iyama and Reiten \cite{IR}.

  A torsion class $\mathcal T$ is called functorially finite if there is
  some $X\in \mod A$ such that $\mathcal T=\Gen(X)$, where $\Gen(X)$
  is by definition the collection of quotients of direct sums of copies of
  $X$.

  In the Kronecker case, which ones are functorially finite? Exactly
  those not in the Boolean lattice. There is no single module
  which generates the whole preinjective component and nothing more, and there is no
  single module which generates any tube without in fact being preprojective
  (and thus generating all the tubes and more).

  As shown by Adachi, Iyama, and Reiten, in the paper \cite{AIR} which introduced the topic of $\tau$-tilting theory, functorially finite torsion classes correspond bijectively to a certain class of modules
  called
  basic support $\tau$-tilting
  modules;
  the bijection from basic support $\tau$-tilting modules to torsion classes
  is $\Gen$.

  Functorially finite torsion classes need not form a lattice. There is
  nothing that guarantees that the intersection of two functorially finite
  torsion classes will be functorially finite, so in order for them to
  form a lattice anyway, there would have to be a biggest functorially finite
  torsion class contained in the intersection, and this does not always hold. Generally, for 
  hereditary algebras not of finite type, the functorially finite torsion
  classes do not form a lattice \cite {IRRTT,Ring}. Thus, for lattice-theoretic
  study, it seems preferable not to restrict to functorially finite
  torsion classes.

%  Torsion classes, on the other hand,
%  do form a lattice. As we have seen, they can be thought of
%  as being governed by bricks. The connection between these two perspectives
%  is given by the brick-$\tau$-rigid correspondence \cite{Asai, DIJ}.

\section{Semidistributivity}\label{csd}

In this section we introduce the notion of semidistributivity of a lattice.
See \cite{AN,RST} for more on the subject.
%A good reference for semidistributivity is \cite{Reading}.

A lattice $L$ is called join semidistributive if $x\vee y = x\vee y'$ implies
that $x\vee (y \wedge y')$ is also equal to both of them.
It is called completely join semidistributive if given $x\in L$ and
a set $S\subseteq L$,
such that $x\vee y=z$ for all $y\in S$, then $x\vee \bigwedge S=z$.

Join semidistributivity and complete join semidistributivity are equivalent
for finite lattices. As usual for us, in the infinite setting, the version
which we prefer is the complete one.

Complete join semidistributivity is equivalent to saying that, given
$x,z\in L$, if we consider $\{y\mid x\vee y =z\}$, then this set,
if it is non-empty, has a minimum element. When we say ``minimum element,''
we do not mean only ``minimal'' (i.e., an element such that there is no
element strictly below it), we mean an element which is weakly below all
the elements in the set.

Similarly, a lattice is called meet semidistributive if
$x\wedge y=x\wedge y'$ implies that $x\wedge (y\vee y')$ is also equal to both of them.
It is called completely meet semidistributive if given $x\in L$ and a set
$S\subseteq L$, such that $x\wedge y=z$ for all $y\in S$, then
$x\wedge \bigvee S=z$. Equivalently, given
$x,z\in L$, if we consider $\{y\mid x\wedge y=z\}$, then this set, if
non-empty, has a maximum element.

A lattice is called semidistributive if it is join semidistributive
and meet semidistributive. It is called completely semidistributive if it
is completely join semidistributive and completely meet semidistributive.

Complete semidistributivity is the property which we are going to focus
on. We are now going to develop some properties of completely semidistributive
lattices. 

\begin{proposition} In any completely join semidistributive lattice $L$,
every cover $y\gtrdot x$ has a unique completely join irreducible element
$j$ such that $x\vee j=y$ and $x\vee j_*=x$.\end{proposition}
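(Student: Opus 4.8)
The plan is to consider the set $S=\{z\mid z<y,\ z\geq x\}$ — that is, all elements strictly below $y$ that still lie weakly above $x$ — and to analyze the join $w=\bigvee S$. Since $y$ covers $x$, every element weakly below $y$ is either $\geq x$ or not comparable to... more precisely, anything strictly below $y$ and weakly above $x$ is a candidate, and I want to show $w<y$, so that $w$ is the largest element strictly below $y$ that dominates $x$. The key observation is that each $z\in S$ satisfies $x\vee z=z<y$, so if we instead look at the set $S'=\{z\mid x\vee z=y\}$ — no wait, the right move is: since $y\gtrdot x$, for any $z$ with $z\not\leq x$ but $z\leq y$ we have $x\vee z=y$. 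So I would apply complete join semidistributivity to the family $S'$ of all $z\leq y$ with $z\not\leq x$: each satisfies $x\vee z=y$, so $x\vee\bigwedge S'=y$, which forces $\bigwedge S'\not\leq x$; set $j=\bigwedge S'$. Then $j\leq y$, $j\not\leq x$, $x\vee j=y$, and $j$ is the minimum element with these properties.

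Next I would check that this $j$ is completely join irreducible. Consider $j_*'=\bigvee\{w\mid w<j\}$, the join of everything strictly below $j$. I claim $j_*'<j$: if $w<j$ then $w$ cannot lie above $j$, and by minimality of $j$ within $S'$, if we had $w\not\leq x$ then $w\in S'$ (since $w\leq j\leq y$), forcing $w\geq j$, a contradiction; hence every $w<j$ satisfies $w\leq x$, so $j_*'\leq x$, and since $j\not\leq x$ we get $j_*'\neq j$, i.e. $j_*'<j$. Thus $j$ is completely join irreducible with $j_*=j_*'\leq x$. Then $x\vee j_*\leq x$, so $x\vee j_*=x$ (equality since $j_*\leq x$), and $x\vee j=y$, as required. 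So existence is done, with the main content being the application of complete join semidistributivity to produce $j$ and then the minimality argument showing everything below $j$ sits under $x$.

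For uniqueness, suppose $j'$ is another completely join irreducible element with $x\vee j'=y$ and $x\vee j'_*=x$. From $x\vee j'=y$ and $j'\not\leq x$ (else $x\vee j'=x\neq y$) together with the fact that $j'_*\leq x$ (which follows from $x\vee j'_*=x$), I want to conclude $j'\leq y$ and then $j'\in S'$, giving $j\leq j'$. The containment $j'\leq y$ isn't automatic, so here I would instead argue directly: replace $j'$ by $j'\wedge y$. Since $j'$ is completely join irreducible and $j'\wedge y<j'$ would force $j'\wedge y\leq j'_*\leq x$, hence $x\vee(j'\wedge y)=x$; but taking meet with $y$ of the relation... actually the clean route is: $j'\wedge y\geq$? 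Let me instead note $y\geq x$ and $y\geq$? — the safe argument is that meet semidistributivity or the cover relation forces $j'\leq y$ because $x\vee j'=y$ implies $j'\leq x\vee j'=y$. That is immediate: $j'\leq x\vee j'=y$. So $j'\in S'$, hence $j\leq j'$. Symmetrically, once we know $j$ is the minimum of $S'$ and $j'$ also has the property that everything strictly below it lies under $x$, the same computation as before shows $j'$ must equal $\bigwedge S'$: indeed $j\leq j'$ and $j\not\leq x$, but if $j<j'$ then $j\leq j'_*\leq x$, contradiction; so $j=j'$. The main obstacle is getting the logical flow right so that minimality of $j$ is used correctly in both the complete-join-irreducibility check and the uniqueness step; the semidistributivity itself is invoked just once, cleanly, at the start.
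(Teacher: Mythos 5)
Your proof is correct and is essentially the paper's argument: since $y\gtrdot x$, your set $S'=\{z\leq y\mid z\not\leq x\}$ coincides with the paper's set $\{z\mid x\vee z=y\}$, and applying complete join semidistributivity to obtain its minimum $j$, showing every element strictly below $j$ lies below $x$ (hence $j$ is completely join irreducible with $j_*\leq x$), and deriving uniqueness from $j\leq j'_*\leq x$ is exactly the route taken in the paper. The only differences are cosmetic (using the definitional form of semidistributivity rather than the stated ``minimum element'' reformulation, plus some false starts in the exposition).
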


\begin{proof} 
Let $S=\{z\mid x\vee z=y\}$. This set is non-empty, since $y\in S$. Thus,
by complete join semidistributivity, it has a minimum element. Call it $j$.

Any $z<j$ satisfies that $x\vee z < y$, and thus that $x\vee z=x$. It
follows that any $z<j$ satisfies that $z\leq x$. Therefore, any $z<j$
satisfies $z\leq x\wedge j$. Since $j\not<x$, we have $x\wedge j < j$.
Thus every element strictly below $j$ is weakly below $x\wedge j<j$. It follows that
$j$ is completely join irreducible, and $j_*=j\wedge x$.

Now suppose that we had some other completely join irreducible element $j'$
such that $x\vee j'=y$ and $x\vee j'_*=x$. Since $j$ is the minimum element
of $S$, we must have $j'>j$. But then $x\geq j'_*\geq j$, which
contradicts $x\vee j> x$. Thus $j$ is unique. \end{proof}

Write $\gamma(y\gtrdot x)$ for the completely join irreducible element
defined in the previous proposition.

Similarly, in a completely meet semidistributive lattice $L$, every
cover $y\gtrdot x$ has a unique completely meet irreducible element $m$ such
that $m\wedge y=x$ and $m^*\wedge y=y$. Write $\mu(y\gtrdot x)$ for this
completely meet irreducible element. 

\begin{proposition} In a completely semidistributive lattice $L$, there
are inverse bijections $\kappa$ and $\kappa^d$:

$$\begin{tikzpicture}
\node (a) at (0,0) {$\cji(L)$};
\node (b) at (3,0) {$\cmi(L)$};
\draw[-stealth] ([yshift=.5mm]a.east) -- node[above] {$\kappa$} ([yshift=.5mm]b.west);
\draw[-stealth] ([yshift=-.5mm]b.west) -- node[below] {$\kappa^d$} ([yshift=-.5mm]a.east);
\end{tikzpicture}$$
such that $\kappa(j)=\mu(j\gtrdot j_*)$ and $\kappa^d(m)=\gamma(m^*\gtrdot m)$.
\end{proposition}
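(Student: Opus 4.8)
The plan is to show the two composites $\kappa^d\circ\kappa$ and $\kappa\circ\kappa^d$ are the identity maps on $\cji(L)$ and $\cmi(L)$ respectively; since each map then has a two-sided inverse, both are bijections and they are mutually inverse. First I would record that the maps are well defined: if $j\in\cji(L)$ then $j\gtrdot j_*$ is a cover, so the construction in the previous propositions produces $\mu(j\gtrdot j_*)\in\cmi(L)$, and dually $\gamma(m^*\gtrdot m)\in\cji(L)$ for $m\in\cmi(L)$.

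For $\kappa^d\circ\kappa=\mathrm{id}$, fix $j\in\cji(L)$ and set $m=\kappa(j)=\mu(j\gtrdot j_*)$, so by the defining property of $\mu$ we have $m\wedge j=j_*$ and $m^*\wedge j=j$. I then verify that $j$ satisfies the two equations characterizing $\gamma(m^*\gtrdot m)$, namely $m\vee j_*=m$ and $m\vee j=m^*$. The first is immediate, since $j_*=m\wedge j\le m$. For the second, $j=m^*\wedge j\le m^*$ gives $m\vee j\le m^*$, while $j\not\le m$ — otherwise $j=m\wedge j=j_*<j$, a contradiction — gives $m\vee j>m$; as $m^*\gtrdot m$ is a cover, this forces $m\vee j=m^*$. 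Since $j$ is completely join irreducible and satisfies both equations, the uniqueness clause in the proposition defining $\gamma$ yields $j=\gamma(m^*\gtrdot m)=\kappa^d(m)$, that is, $\kappa^d(\kappa(j))=j$.

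The identity $\kappa\circ\kappa^d=\mathrm{id}$ is the order-dual of this computation: passing to $L^{\mathrm{op}}$ exchanges $\vee\leftrightarrow\wedge$, $\cji\leftrightarrow\cmi$, the operations $j\to j_*$ and $m\to m^*$, and $\gamma\leftrightarrow\mu$ (hence $\kappa\leftrightarrow\kappa^d$), while covers remain covers, so the argument transports verbatim. Concretely: given $m\in\cmi(L)$ and $j=\gamma(m^*\gtrdot m)$, one has $m\vee j_*=m$ and $m\vee j=m^*$; then $j_*\le m$, while $j\not\le m$ (else $m\vee j=m$, contradicting $m\vee j=m^*>m$) gives $m\wedge j<j$, and complete join irreducibility of $j$ then forces $m\wedge j=j_*$; finally $j\le m^*$ gives $m^*\wedge j=j$, so $m=\mu(j\gtrdot j_*)=\kappa(j)$ by the uniqueness of $\mu$.

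There is no genuine obstacle here beyond bookkeeping; the whole proof is unwinding the definitions of $\gamma$, $\mu$, $\kappa$, $\kappa^d$. The one substantive observation is that $j\not\le m$ while $j\le m^*$, which traps $m\vee j$ strictly between $m$ and $m^*$ and hence pins it to $m^*$; once that is in hand, the uniqueness statements from the two preceding propositions do all the work.
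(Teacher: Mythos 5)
Your proof is correct and takes essentially the same route as the paper: from $m=\mu(j\gtrdot j_*)$ you extract the four-element configuration ($j_*\le m$, $j\le m^*$, $j\not\le m$), pin $m\vee j$ to $m^*$ using the cover $m^*\gtrdot m$, and invoke the uniqueness clause defining $\gamma$ to get $\kappa^d(\kappa(j))=j$, then dualize. The paper compresses exactly this verification into its diagram and the phrase ``it is clear,'' so your write-up simply makes the paper's intended argument explicit.
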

It is standard to call these two maps $\kappa$ and $\kappa^d$ but
different sources disagree as to which is which.
%this is contrary to adaricheva-nation

\begin{proof} 
Let $j$ be a completely join irreducible element of $L$, and let $m=\kappa(j)=\mu(j\gtrdot j_*)$. We therefore have the following diagram:

$$\begin{tikzpicture} \node (a) at (0,0) {$m^*$};
\node (b) at (1,-1) {$j$};
\node (c) at (-1,-1) {$m$};
\node (d) at (0,-2) {$j_*$};
\draw[dotted] (a) -- (b);
\draw[dotted] (c) -- (d);
\draw (a)--(c);
\draw (b) --(d);
\end{tikzpicture}
$$

But now it is clear that $\kappa^d(m^*\gtrdot m)=j$, so $\kappa^d\circ \kappa$
is the identity. The dual argument shows that $\kappa\circ\kappa^d$ is
the identity, and we have shown that $\kappa$ and $\kappa^d$ are inverse
bijections.
\end{proof}

We now have the following theorem, which shows that the two labellings of
the covers of $L$
differ only by a bijection.

\begin{thm} Let $L$ be a completely semidistributive lattice. Then
$\mu(y\gtrdot x)=\kappa(\gamma(y\gtrdot x))$
\end{thm}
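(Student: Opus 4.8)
The plan is to reduce the claim to the definitions of $\gamma$ and $\mu$ together with the already-established fact that $\kappa$ sends a completely join irreducible element to the completely meet irreducible element appearing in the same "square" diagram. Fix a cover $y \gtrdot x$. By definition, $j := \gamma(y \gtrdot x)$ is the unique completely join irreducible element with $x \vee j = y$ and $x \vee j_* = x$; likewise $m := \mu(y \gtrdot x)$ is the unique completely meet irreducible element with $m \wedge y = x$ and $m^* \wedge y = y$. The goal is to show $m = \kappa(j)$, i.e.\ $m = \mu(j \gtrdot j_*)$.

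First I would extract from the proof of the proposition defining $\gamma$ the extra structural information it produced: there we saw $j_* = j \wedge x$. This already tells us that $j \gtrdot j_*$ is genuinely a cover, so $\kappa(j) = \mu(j \gtrdot j_*)$ makes sense. Now I would verify directly that $m$ satisfies the two defining conditions of $\mu(j \gtrdot j_*)$, namely $m \wedge j = j_*$ and $m^* \wedge j = j$. For the first: since $m \wedge y = x$ and $j \le y$, we get $m \wedge j = m \wedge j \wedge y = j \wedge (m \wedge y) = j \wedge x = j_*$. For the second: since $m^* \wedge y = y$, in particular $m^* \ge y \ge j$ (as $m^* \wedge y = y$ forces $y \le m^*$), hence $m^* \wedge j = j$. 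By the uniqueness clause in the proposition defining $\mu$, these two equalities force $m = \mu(j \gtrdot j_*) = \kappa(j)$, which is exactly the assertion.

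The one point that needs a little care — and the only place I expect any friction — is confirming that $m^* \ge j$; this uses $m^* \wedge y = y \iff y \le m^*$ together with $j \le y$, so it is immediate once one unpacks that $a \wedge b = b$ means $b \le a$. Everything else is a routine manipulation of meets using $j \le y$ and the defining equations. An alternative, even shorter route would be to appeal to the "square" diagram associated to $j$ in the proof of the $\kappa$--$\kappa^d$ proposition: that diagram has bottom $j_*$, top $m^*$, and sides $j$ and $m = \kappa(j)$, with $m \wedge m^* \wedge \cdots$ arranged so that $m \wedge j = j_*$ and $m \vee j = m^*$; one then checks this square coincides with the $(x,y)$-square, forcing $m$ to be the meet-irreducible attached to $y \gtrdot x$, i.e.\ $\mu(y \gtrdot x)$. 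Either way, the content is purely the uniqueness built into the definitions of $\gamma$, $\mu$, and $\kappa$.
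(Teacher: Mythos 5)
Your argument is correct and is essentially the paper's proof: the paper simply draws the diagram recording the relations $j\leq y$, $j_*\leq x\leq m$, $y\leq m^*$ together with the three covers, and the result follows by the same uniqueness clause in the definition of $\mu(j\gtrdot j_*)$ that you invoke. Your write-up just makes the meet computations ($m\wedge j=j_*$, $m^*\wedge j=j$) explicit, which the paper leaves implicit in the diagram.
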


\begin{proof} For any $y\gtrdot x$, let $j=\gamma(y\gtrdot x)$ and
$m=\mu(y\gtrdot x)$. We therefore have the following diagram, from which
the result follows.

$$\begin{tikzpicture} \node (a) at (0,0) {$m^*$};
\node (b) at (1,-1) {$y$};
\node (c) at (-1,-1) {$m$};
\node (d) at (0,-2) {$x$};
\node (e) at (2,-2) {$j$};
\node (f) at (1,-3) {$j_*$};
\draw[dotted] (a) -- (b);
\draw[dotted] (c) -- (d);
\draw[dotted] (b) -- (e);
\draw [dotted] (d) -- (f);
\draw (a)--(c);
\draw (b) --(d);
\draw (e) -- (f);
\end{tikzpicture}$$
\end{proof}

\section{Complete semidistributivity of $\tors A$}

The fact that lattices of torsion classes are semidistributive was first
proved by Garver and McConville \cite{GM}. For
not necessarily finite lattices of torsion classes, it turns out to be
natural to consider complete semidistributivity. 

\begin{thm}[{\cite[Theorem 3.1(a)]{DIRRT}}] $\tors A$ is completely semidistributive. \end{thm}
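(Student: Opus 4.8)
The plan is to prove complete meet semidistributivity directly; complete join semidistributivity then follows by applying the order-reversing bijection of Theorem \ref{rev} to the parallel statement for $\tf A$ (which is proved the same way on the torsion-free side), or can be argued symmetrically. So fix a torsion class $\X$ and a family $S=\{\Y_i\}_{i\in I}$ of torsion classes with $\X\cap\Y_i=\Z$ for all $i$; we must show $\X\cap\left(\bigvee_{i} \Y_i\right)=\Z$. The inclusion $\supseteq$ is clear since each $\Y_i\supseteq\Z$ and $\X\supseteq\Z$. The content is the inclusion $\subseteq$: a module $M$ lying in both $\X$ and $\bigvee_i\Y_i=T\!\left(\bigcup_i\Y_i\right)$ must already lie in $\Z$.

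First I would reduce to the case of a single module. Suppose $M\in\X\cap\bigvee_i\Y_i$; choose such an $M$ of minimal dimension not in $\Z$, aiming for a contradiction. Since $M\in T\!\left(\bigcup_i\Y_i\right)$, it has a filtration whose subquotients are quotients of modules from the various $\Y_i$; in particular $M$ has a nonzero submodule $M_1$ which is a quotient of some module in $\Y_{i_0}$, hence $M_1\in\Y_{i_0}$. The key step is to combine this with membership in $\X$: I want to argue that $M_1$, or some piece of it, lies in $\X$ as well, so lies in $\X\cap\Y_{i_0}=\Z$, and then use minimality on $M/M_1$. The subtlety is that a submodule of $M\in\X$ need not lie in $\X$ — torsion classes are closed under quotients, not submodules — so the naive induction on the filtration does not immediately work; this is the main obstacle.

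To get around it, I would instead exploit the canonical short exact sequence $(*)$ from Section 3 relative to the torsion class $\Z$: write $0\to t_\Z M\to M\to M/t_\Z M\to 0$ with $M/t_\Z M\in\Z^\perp$. It suffices to show $M=t_\Z M$. Now $M/t_\Z M$ is a quotient of $M\in\X$, so $M/t_\Z M\in\X$; and $M/t_\Z M$ is a quotient of a module in $\bigvee_i\Y_i$, so $M/t_\Z M\in\bigvee_i\Y_i$. Thus replacing $M$ by $M/t_\Z M$ we may assume $t_\Z M=0$, i.e.\ $M$ has no nonzero submodule in $\Z$, and we must show $M=0$. Using the filtration of $M$ by quotients of the $\Y_i$, take the bottom nonzero piece $M_1$: it is a submodule of $M$ and a quotient of some $N\in\Y_{i_0}$, hence $M_1\in\Y_{i_0}$. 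We need $M_1\in\X$ too. For this I would use that $M\in\X$ and consider $\Hom(M_1,-)$: more precisely, apply the analysis via $t_{\Y_{i_0}}$ or, better, use Hom-orthogonality — since $M\in\X$ and we want to land in $\X\cap\Y_{i_0}=\Z$, and since $M_1\subseteq M$, one shows $M_1$ generates (inside $\X$) only modules forced into $\Z$. Concretely: $t_{\Y_{i_0}}M$ is a submodule of $M$ lying in $\Y_{i_0}$; it is also a quotient of... — here the cleanest route is to run the minimality argument on $M/M_1$ after first verifying $M_1\in\Z$, which follows because $M_1\in\Y_{i_0}$ and $M_1$, being a submodule of $M$, would need to be shown to lie in $\X$; the honest fix is to note $M_1\in\X\cap\Y_{i_0}$ fails in general, so one proves the statement instead through the left-perpendicular description, establishing $\bigvee_i\Y_i \subseteq \uperp\big((\bigvee_i\Y_i)^\perp\big)$ and computing $(\bigvee_i\Y_i)^\perp=\bigcap_i\Y_i^\perp$.

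Given the above complications, I expect the slickest argument is actually the perpendicular one, which I would present as the real proof. Complete meet semidistributivity of $\tors A$ is equivalent, under $\T\mapsto\T^\perp$, to: if $\F,\mathcal{G}_i\in\tf A$ with $F(\F\cup\mathcal G_i)=\mathcal H$ for all $i$ (i.e.\ $\F\vee\mathcal G_i=\mathcal H$ in $\tf A$), then $\F\vee\bigcap_i\mathcal G_i=\mathcal H$. This in turn, via the module-by-module filtration description of $F(-)$ in Section 4 together with Lemma \ref{slem}'s torsion-free analogue, reduces to the brick-theoretic input: by the torsion-free version of Theorem \ref{cjib}, $\F\vee\bigcap_i\mathcal G_i$ and $\mathcal H$ are determined by the bricks they contain, and a brick $B\in\mathcal H$ is either in $\F$ (then in $\F\vee\bigcap\mathcal G_i$) or has, in every $\mathcal G_i$, a submodule surjecting onto $B$ forcing $B\in\mathcal G_i$ for all $i$ hence $B\in\bigcap_i\mathcal G_i$. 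The main obstacle throughout is exactly this point — upgrading ``$M$ lies in the join, built from filtrations'' to ``$M$, hence the relevant bricks inside $M$, lies in each individual factor'' — and Lemma \ref{slem} together with Lemma \ref{lone} is precisely the tool designed to cross it. I would therefore organize the proof as: (1) dualize to $\tf A$; (2) reduce to bricks via Lemma \ref{lone}'s analogue; (3) apply Lemma \ref{slem}'s analogue to each $\mathcal G_i$ separately; (4) translate back via Theorem \ref{rev}.
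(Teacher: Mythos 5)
There is a genuine gap. Your first line of attack (minimal-dimensional counterexample $M\in\X\cap\bigvee S$, filtration coming from the description of the join as $T(\bigcup S)$) is in fact the paper's proof, but you abandon it because of an obstacle that is not actually there. You try to show that the bottom piece $M_1$ of the filtration lies in $\X$, observe correctly that torsion classes are not closed under submodules, and conclude that the induction breaks. The paper's argument never needs $M_1\in\X$: take $M$ of minimal dimension in $\X\cap\bigvee S$ but not in $\Z$, with $M_1\in\Y_1$ for some $\Y_1\in S$. The quotient $M/M_1$ lies in $\X\cap\bigvee S$ and has strictly smaller dimension, so by minimality $M/M_1\in\Z\subseteq\Y_1$. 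Now apply extension-closure in $\Y_1$ (not in $\X$) to the sequence $0\rightarrow M_1\rightarrow M\rightarrow M/M_1\rightarrow 0$ to get $M\in\Y_1$, hence $M\in\X\cap\Y_1=\Z$, a contradiction. This one extra step closes exactly the gap you identified, and no brick-theoretic input is needed.

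The replacement argument you offer instead does not work. Its key claim is that a brick $B\in\mathcal H=\F\vee\mathcal G_i$ (join taken in $\tf A$) is either in $\F$ or lies in every $\mathcal G_i$. That dichotomy is false, and the hypothesis that all the joins coincide does not rescue it: already with $S$ a singleton it would assert that every brick in a join of two torsion-free classes lies in one of the two factors, which fails in type $A_2$, where $[11]$ lies in $F([10])\vee F([01])$ but in neither $F([10])$ nor $F([01])$. Lemma \ref{slem} (and its dual) only controls morphisms from objects of $T(B)$ (resp.\ $F(B)$) for a \emph{single} brick $B$; it says nothing about membership of a brick in a join of two arbitrary torsion-free classes, so steps (2)--(3) of your outline cannot be carried out as stated. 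The reduction to bricks via the analogue of Lemma \ref{lone} is fine but unnecessary; the correct and complete route is the elementary filtration argument above, with complete join semidistributivity obtained, as you say, by the dual argument in $\tf A$ together with Theorem \ref{rev}.
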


\begin{proof} We will prove complete meet semidistributivity. Complete join
  semidistributivity follows from the complete meet semidistributivity of
  $\tf A$, which is established by a dual argument.
  
Let $\X\in \tors A$, and let $S\subseteq \tors A$ such that for all 
$\Y \in S$, we have $\X\wedge \Y$ is equal. Let $\Z$ be their
common value. Since the meet of torsion classes is intersection,
we have that $\Z=\X\cap \Y$ for any $\Y \in S$.

We want to show that $\X \cap \bigvee S = \Z$ also. 

Clearly $\X \cap \bigvee S \geq \Z$. To prove the opposite inclusion, let
$M\in X\cap \bigvee S$  be a minimal-dimensional counter-example.

Since $M\in \bigvee S$, there is a filtration of $M$
$$0=M_0 \subset M_1 \dots \subset M_r=M$$ with $M_i/M_{i-1} \in \Y_i$, with
$\Y_i \in S$.
%If $r=1$, so that $M=M_1$, then $M\in \Y_1$, and therefore in
%$\X \cap \Y_1=Z$, contadicting our choice of $M$.

%So assume that $r>1$. 
Consider
the short exact sequence:
$$ 0 \rightarrow M_1 \rightarrow M \rightarrow M/M_1 \rightarrow 0$$

Now $M/M_1\in X \cap \bigvee S$ since $M$ is. Since $M_1$ is non-zero,
the dimension of $M/M_1$ is less than that of $M$, and thus by our choice
of $M$, we know that $M/M_1$ is not a counter-example. Therefore,
$M/M_1\in \Z$, so in particular $M/M_1\in \Y_1$. On the other hand, we also
know that $M_1\in \Y_1$. Because $\Y_1$ is a torsion class, and therefore
closed under extensions, $M\in \Y_1$. We also know $M\in \X$. Therefore
$M\in \X \cap \Y_1=\Z$. This contradicts our choice of $M$, so it must be
that $\X \cap \bigvee S = \Z$. \end{proof}

\section{Consequences of the complete semidistributivity of $\tors A$}
\label{cons}

As we showed in Section \ref{csd}, a completely semidistributive lattice
has a labelling of every cover relation $y \gtrdot x$
by a completely join irreducible element $\gamma(y\gtrdot x)$,
and a labelling of every cover relation by a completely meet irreducible element
$\mu(y\gtrdot x)$, and
these two labellings are related by the maps $\kappa$ and $\kappa^d$.
We would like to understand what this means in the case of the lattice of
torsion classes.

Since we know that the completely join irreducible torsion classes
correspond to bricks by Theorem \ref{cjib}, for $\Y \gtrdot \X$ in $\tors A$,
define $\hat \gamma(\Y \gtrdot X)= B$, such that $\gamma(\Y\gtrdot \X)=T(B)$.
The following proposition defines $\hat\gamma(\Y\gtrdot \X)$ directly.

\begin{proposition} $\hat\gamma(\Y\gtrdot \X)$ is the unique brick $B$
which is contained in $\Y$ but not in $\X$. \end{proposition}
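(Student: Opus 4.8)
The statement claims that for a cover $\Y \gtrdot \X$ in $\tors A$, the brick $\hat\gamma(\Y\gtrdot\X)$ — defined so that $\gamma(\Y\gtrdot\X) = T(B)$ — is the unique brick $B$ contained in $\Y$ but not in $\X$. The plan is to prove this in two parts: first, existence and the identification of $B$ with the brick attached to the completely join irreducible $\gamma(\Y\gtrdot\X)$; second, uniqueness, i.e., that no other brick lies in $\Y\setminus\X$.

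**Existence.** Recall from the proposition on covers in completely join semidistributive lattices that $j := \gamma(\Y\gtrdot\X)$ is the minimum element of $\{\Z \mid \X\vee\Z = \Y\}$, that it is completely join irreducible, and that $j_* = j\wedge\X$; in particular $\X\vee j = \Y$ while $\X\vee j_* = \X$, so $j \leq \Y$ but $j \not\leq \X$. By Theorem \ref{cjib}, $j = T(B)$ for a brick $B$, and since $j \subseteq \Y$ we get $B \in T(B) \subseteq \Y$. Moreover $B \notin \X$: if $B \in \X$ then $T(B) \subseteq \X$ (as $\X$ is a torsion class containing $B$, hence contains the smallest such by Proposition \ref{p1}), i.e. $j \leq \X$, a contradiction. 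So $B = \hat\gamma(\Y\gtrdot\X)$ is indeed a brick in $\Y$ but not in $\X$.

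**Uniqueness.** Suppose $B'$ is any brick with $B' \in \Y$ and $B' \notin \X$. I want to show $T(B') = j = T(B)$, whence $B' \cong B$ by injectivity of $B \mapsto T(B)$ (Theorem \ref{cjib}). Since $B' \in \Y$ we have $T(B') \subseteq \Y$. The key point is to show $\X \vee T(B') = \Y$: this forces $T(B') \supseteq j$ by minimality of $j$ in $\{\Z \mid \X\vee\Z=\Y\}$. For this, note $\X \subsetneq \X \vee T(B') \subseteq \Y$, and since $\Y$ covers $\X$ we must have $\X\vee T(B') = \Y$ — the strict inclusion holds because $B' \notin \X$ while $B' \in T(B') \subseteq \X\vee T(B')$. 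So $j \subseteq T(B')$. For the reverse inclusion, I expect to argue that $j$ being completely join irreducible with $j = T(B)$ and $T(B') \subseteq \Y$, $T(B') \not\subseteq \X = j_* \vee (\text{stuff})$... more carefully: since $B' \notin \X$ and $\X \supseteq$ every torsion class properly below $j$ that arises here — actually the cleanest route is: $j_* = j \wedge \X$, and any torsion class $\Z$ with $\Z \subseteq j$ and $\Z \not\subseteq j_*$ must equal $j$ (definition of completely join irreducible: $j_*$ is the maximum element strictly below $j$, so a sub-torsion-class of $j$ not contained in $j_*$ is all of $j$). I'd apply this with $\Z = j \wedge T(B')$: it is $\subseteq j$; if it were $\subseteq j_* = j\wedge\X \subseteq \X$ then $B' \in T(B') $, and... hmm, I need $B' \in j\wedge T(B')$, which requires $B' \in j = T(B)$ — not obviously true.

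**The main obstacle.** The genuinely delicate step is the reverse inclusion $T(B') \subseteq j = T(B)$, equivalently showing $B' \in T(B)$, from which Lemma \ref{slem} (applied both ways, as in the injectivity argument in Theorem \ref{cjib}) gives mutual surjections $B \leftrightarrow B'$ and hence $B \cong B'$. To get $B' \in T(B)$: we have $j \subsetneq$ nothing between $j$ and $\Y$ necessarily, but we do know $\Y \gtrdot \X$ and $j \not\subseteq \X$, $j \subseteq \Y$. Consider $\X \vee T(B') = \Y = \X \vee j$. By complete \emph{join} semidistributivity, $\X \vee (j \wedge T(B')) = \Y$; since $j$ is the minimum such element, $j \subseteq j \wedge T(B')$, i.e. $j \subseteq T(B')$, and combined with $j \wedge T(B') \subseteq j$ we get $j \wedge T(B') = j$, which just re-derives $j\subseteq T(B')$ — so this alone is not enough, and one really does need to invoke that $B'$, being a brick in $\Y\setminus\X$, cannot generate a torsion class strictly larger than the \emph{minimal} one with join $\Y$ over $\X$ without violating... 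Actually the resolution: once $j \subseteq T(B')$, we have $B \in j \subseteq T(B')$, so by Lemma \ref{slem} either $\Hom(B,B')=0$ or $B \onto B'$; since $B \in T(B')$ and $B'$ is the "source" one would expect $B \onto B'$. Symmetrically, we'd want $B' \in T(B)$ to finish — and to get that, note $T(B') \subseteq \Y$ and I claim $T(B') \not\subseteq \X$, so $T(B') \vee \X = \Y = j \vee \X$; if also $T(B') \not\subseteq j$, then $j \vee T(B') \supsetneq j$... The clean finish is: $T(B') \supseteq j = T(B) \ni B$, Lemma \ref{slem} gives $B \onto B'$; then $B'$ is a quotient of $B$, so $B' \in T(B)$, hence $T(B') \subseteq T(B)$, giving equality and $B \cong B'$ by injectivity. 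So the real crux reduces to establishing $j \subseteq T(B')$ cleanly, which the covering relation $\Y \gtrdot \X$ together with minimality of $j$ delivers as sketched above — that is the step I'd write out most carefully, and I'd double-check the use of Lemma \ref{slem} requires the surjection to actually exist (it does, since $B \in T(B')$ and $\Hom(B,B')\neq 0$ because $B \cong$ a submodule-filtered-quotient situation forces a nonzero map, or more simply because otherwise $B \in \uperp B' \supseteq$ ... one argues $\Hom(B,B') \neq 0$ since $B \in T(B')$ and $B \neq 0$ combined with $T(B') \cap \uperp F(B')$ being the maximum proper sub-class, and $B \notin$ that class would need checking). That interplay — ensuring the Hom-nonvanishing so Lemma \ref{slem} bites — is where I expect to spend the most care.
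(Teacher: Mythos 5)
Your existence half is correct and follows the same route as the paper: $\gamma(\Y\gtrdot\X)$ is the minimum of $\{\Z\mid\X\vee\Z=\Y\}$, it equals $T(B)$ for a brick $B$ by Theorem \ref{cjib}, and $B\in\Y$, $B\notin\X$. Your first uniqueness step is also sound: for any brick $B'\in\Y\setminus\X$ the cover relation forces $\X\vee T(B')=\Y$, so minimality gives $T(B)\subseteq T(B')$. But the step you yourself flag as the crux --- getting $T(B')\subseteq T(B)$, equivalently a surjection $B\onto B'$ --- is a genuine gap, and it cannot be filled: Lemma \ref{slem} applied to $B\in T(B')$ produces a surjection only when $\Hom(B,B')\neq 0$, and this can genuinely vanish. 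In fact, read literally, the uniqueness claim is false. In the paper's own Kronecker example the edge $\mod A\gtrdot\langle[10]\rangle$ is a cover whose label is $[01]$, yet every brick other than $[10]$ --- for instance any quasi-simple $[11]_x$, for which $\Hom([01],[11]_x)=0$ --- lies in $\Y$ and not in $\X$; a finite instance is the preprojective algebra of $A_2$, where $\mod A$ covers $T(P_1)=\langle S_1,P_1\rangle$ and both $S_2$ and $P_2$ are bricks outside it (again with $\Hom(S_2,P_2)=0$, which is exactly why your Lemma \ref{slem} step gets no traction).

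What is true, and what the paper actually uses later (in the proofs of Propositions \ref{agree} and \ref{interval}), is that $\hat\gamma(\Y\gtrdot\X)$ is the unique brick in $\X^\perp\cap\Y$. The point on which both your attempt and the paper's short proof founder is the same: the first proposition of Section \ref{csd} gives uniqueness of a completely join irreducible $j$ subject to \emph{two} conditions, $\X\vee j=\Y$ and $\X\vee j_*=\X$; uniqueness fails if the second is dropped, and ``$B\in\Y$, $B\notin\X$'' encodes only the first. The paper's proof asserts that there is a unique completely join irreducible torsion class below $\Y$ and not below $\X$, which overstates that proposition in just this way, so your difficulty is a symptom of a defect in the statement rather than a missing trick on your side. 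If you add the hypothesis $B'\in\X^\perp$, your argument closes cleanly and without Asai's lemma: since $\uperp B'=\uperp F(B')$ (proof of Theorem \ref{cjib}), $B'\in\X^\perp$ gives $\X\subseteq\uperp F(B')$; as $T(B')_*=T(B')\cap\uperp F(B')$ and $B'\in\Y\setminus\uperp F(B')$, we get $\X\subseteq\X\vee T(B')_*\subseteq\uperp F(B')\cap\Y\subsetneq\Y$, so the cover relation forces $\X\vee T(B')_*=\X$; together with $\X\vee T(B')=\Y$, the uniqueness clause of the Section \ref{csd} proposition yields $T(B')=\gamma(\Y\gtrdot\X)=T(B)$, and then $B'\cong B$ by the injectivity in Theorem \ref{cjib}.
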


\begin{proof} By the complete semidistributivity of $\tors A$, we know
that there is a unique completely join irreducible torsion class,
$\gamma(\Y\gtrdot \X)$, such that $\Y\geq \gamma(\Y\gtrdot \X)$ but
$\X \not \geq \gamma(\Y\gtrdot \X)$. By Theorem \ref{cjib}, the
completely join irreducible elements are of the form $T(B)$, for $B$ a
brick.
We have that $\Y \supseteq T(B)$ and $\X \not \supseteq T(B)$ iff
$B\in\Y$ and $B\not\in \X$. So there is a unique brick contained in
$\Y$ but not in $\X$, and it is $\hat\gamma(\Y\gtrdot \X)$.
\end{proof}

Dually, $\mu(\Y\gtrdot\X)=\uperp F(\hat\gamma(\Y\gtrdot\X))$.

\begin{example}[Type $A_2$]
  The brick labelling of the covers of $\tors kQ$ for $Q= 1\leftarrow 2$
  is as follows:

$$\begin{tikzpicture}[yscale=1.5]
  \node (a) at (0,0) {$\langle [10],[11],[01] \rangle$};
  \node (b) at (-1,-1) {$\langle [11],[01]\rangle$};
  \node (c) at (-1,-2) {$\langle [01]\rangle$};
  \node (d) at (0,-3) {$0$};
  \node (e) at (1,-2) {$\langle [10]\rangle$};

  \draw (a) -- node[left] {\color{blue} $[10]$}(b) --
  node[left] {\color{blue} $[11]$} (c) --
  node[left] {\color{blue} $[01]$} (d) --
  node[right] {\color{blue} $[10]$} (e) --
  node [right] {\color{blue} $[01]$}  (a);
\end{tikzpicture}$$
\end{example}
  
\begin{example}[Kronecker quiver] We revisit the Kronecker quiver from Example \ref{ex3}. The brick labels of some of the covers were already shown in
  Figure \ref{figa}. Inside the interval that is isomorphic to a Boolean
  lattice on the set of tubes, one torsion class covers another
  if they differ exactly in that there is one tube present in one
  but not the other. In this case the brick labelling the cover relation
  is the quasi-simple at the bottom of that tube. 
\end{example}
  
\section{Algebra quotients and lattice quotients}

A surjective map of lattices $L\onto L'$ is called a (complete) lattice quotient if
it respects the (complete) lattice operations.

  For $I$ an ideal of an algebra $A$, consider the algebra quotient $\phi:A\onto A/I$. We can view $\mod A/I$ as the
  subcategory of $\mod A$ consisting of modules annihilated by $I$. We will
  be interested in the map sending $\T$ in $\mod A$ to $\T\cap \mod A/I$. 
  %For a torsion class $\T$ of $\mod A$, define
  %$\phibar(\T)=\T\cap \mod A/I$. 

  \begin{proposition} $\T\cap \mod A/I$ is a torsion class for $A/I$.
  \end{proposition}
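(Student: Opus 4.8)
The plan is to verify directly that $\T' := \T \cap \mod A/I$ satisfies the two defining closure properties of a torsion class, working inside the category $\mod A/I$ (equivalently, inside $\mod A$ restricted to modules annihilated by $I$). The key observation underpinning everything is that $\mod A/I$ is itself closed under quotients, submodules, and extensions as a subcategory of $\mod A$: if a module is annihilated by $I$, so is any quotient, any submodule, and any extension of two such modules (since $I$ annihilating the sub and quotient of a short exact sequence forces $I \cdot E \subseteq$ the submodule and $I \cdot (I \cdot E) = 0$, but more simply $I$ acts as zero on $E$ because it does on both ends — one checks $I^2 E = 0$ and in fact $IE = 0$ since $IE$ maps to zero in the quotient hence lies in the submodule, where $I$ again kills it... actually the clean statement is just that a module $M$ lies in $\mod A/I$ iff $IM = 0$, and this condition is inherited by sub/quotient trivially, and by extensions since $IE$ is a submodule of $E$ landing in $X'$ after projection to $X''$, and then $I(IE) = 0$, so $IE \subseteq X'$ with $IE$ an $A/I$-submodule of $X'$ annihilated by... hmm, here one needs a touch of care, but it is standard that $\mod A/I$ is an extension-closed, quotient-closed, sub-closed subcategory — see \cite[Chapter VI]{ASS}).

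Granting that, the two checks are immediate. For closure under quotients: suppose $Y \in \T'$ and $Y \onto Z$ in $\mod A/I$. Then $Y \in \T$, so $Z \in \T$ since $\T$ is a torsion class; and $Z \in \mod A/I$; hence $Z \in \T'$. For closure under extensions: suppose we have a short exact sequence $0 \to X \to Y \to Z \to 0$ in $\mod A/I$ with $X, Z \in \T'$. This is also a short exact sequence in $\mod A$ with $X, Z \in \T$, so $Y \in \T$ by extension-closure of $\T$; and $Y \in \mod A/I$ because $\mod A/I$ is extension-closed in $\mod A$; hence $Y \in \T'$. That completes the verification.

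I do not expect any real obstacle here — the proposition is essentially a formal consequence of the fact that intersecting a torsion class with an extension-closed, quotient-closed subcategory again yields a torsion class of the ambient category, and then reinterpreting that intersection inside $\mod A/I$. The only point that deserves a sentence of justification is the closure of $\mod A/I$ under extensions in $\mod A$, which I would either cite from \cite[Chapter VI]{ASS} or dispatch in one line: if $IX = 0$ and $IZ = 0$ in the sequence $0 \to X \to Y \to Z \to 0$, then $IY$ maps to $IZ = 0$ in $Z$, so $IY \subseteq X$, whence $I(IY) \subseteq IX = 0$; since $I$ is an ideal this forces $IY = 0$ as well (more directly, the composite $I \otimes Y \to Y \to Z$ being zero means the image of $I$-action lands in $X$, and iterating kills it). In short: the proof is "torsion classes restrict along quotient-closed, extension-closed embeddings," applied to $\mod A/I \hookrightarrow \mod A$.
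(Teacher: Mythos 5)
Your two closure checks land on the right conclusions, but the ``key observation'' you lean on --- that $\mod A/I$ is closed under extensions as a subcategory of $\mod A$ --- is false, and your one-line justification of it does not work: from $IY\subseteq X$ and $IX=0$ you only get $I^2Y=0$, which does not force $IY=0$. A concrete counterexample: let $A=k[x]/(x^2)$ and $I=(x)$. The sequence $0\rightarrow I\rightarrow A\rightarrow A/I\rightarrow 0$ has both end terms annihilated by $I$, yet $IA=I\neq 0$, so $A\notin\mod A/I$. For the same reason your closing slogan is wrong as stated: $\T\cap\mod A/I$ is in general \emph{not} a torsion class of the ambient category $\mod A$ (take $\T=\mod A$ in the example above), it is only a torsion class of $\mod A/I$, which is exactly what the proposition asserts.

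Fortunately the error is removable rather than fatal, because the false claim is never actually needed. The extension axiom for $\T\cap\mod A/I$ only has to be verified for short exact sequences $0\rightarrow X\rightarrow Y\rightarrow Z\rightarrow 0$ taken \emph{in} $\mod A/I$, so the middle term $Y$ is an $A/I$-module by hypothesis; there is nothing to prove about $IY$. One then only uses that such a sequence is also exact in $\mod A$, so $Y\in\T$ by extension-closure of $\T$, hence $Y\in\T\cap\mod A/I$. Similarly, quotients computed in $\mod A/I$ agree with quotients in $\mod A$, and a quotient of a module annihilated by $I$ is annihilated by $I$, which gives the quotient axiom. With the erroneous justification deleted and replaced by these observations, your argument becomes precisely the routine verification the paper has in mind when it says the two defining conditions are easy to check.
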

  \begin{proof} It is easy to check that it satisfies the two defining
    conditions. \end{proof}

%(It is easy to confirm
%that this is indeed a lattice map.)

\begin{proposition}[{\cite[Proposition 5.7(a)]{DIRRT}}] \label{dprop} If $(\T,\F)$ is a torsion pair of $\mod A$, then
  $$(\T\cap \mod A/I,\F\cap \mod A/I)$$ is a torsion pair of $\mod A/I$.
\end{proposition}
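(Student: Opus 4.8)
We already know from the preceding proposition that $\T \cap \mod A/I$ is a torsion class for $A/I$, and dually (working in $\mod A/I$, viewed as a subcategory of $\mod A$) that $\F \cap \mod A/I$ is a torsion free class for $A/I$. So it remains only to check that these two subcategories are \emph{perpendicular} to each other within $\mod A/I$, i.e. that for $X \in \T \cap \mod A/I$ and $Y \in \F \cap \mod A/I$ we have $\Hom_{A/I}(X,Y)=0$, and that together they account for everything: every $M \in \mod A/I$ sits in a short exact sequence with left term in $\T \cap \mod A/I$ and right term in $\F \cap \mod A/I$.

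The first point is immediate: since $\mod A/I$ is a full subcategory of $\mod A$, $\Hom_{A/I}(X,Y) = \Hom_A(X,Y)$, and the latter is zero because $X \in \T$, $Y \in \F$ and $(\T,\F)$ is a torsion pair of $\mod A$. For the second point, take $M \in \mod A/I$ and apply the torsion pair $(\T,\F)$ of $\mod A$: we get the canonical sequence $0 \to t_\T M \to M \to M/t_\T M \to 0$ with $t_\T M \in \T$ and $M/t_\T M \in \F$. The key observation is that $M$ is annihilated by $I$, and submodules and quotients of modules annihilated by $I$ are again annihilated by $I$; hence $t_\T M \in \mod A/I$ and $M/t_\T M \in \mod A/I$. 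Therefore $t_\T M \in \T \cap \mod A/I$ and $M/t_\T M \in \F \cap \mod A/I$, and this sequence exhibits $M$ as an extension of the required form. By the characterization of torsion pairs (as in the corollary following Theorem \ref{rev}, applied now inside $\mod A/I$), the pair $(\T \cap \mod A/I,\ \F \cap \mod A/I)$ is a torsion pair of $\mod A/I$.

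**Where the work is.** There is no real obstacle here; the whole argument rests on the single elementary fact that the property of being annihilated by $I$ is inherited by subobjects and quotient objects, so that the canonical short exact sequence for the torsion pair $(\T,\F)$ of $\mod A$ stays inside $\mod A/I$ when its middle term does. One could equally phrase the proof using $\F = \T^\perp$ in $\mod A$ and checking $\F \cap \mod A/I = (\T \cap \mod A/I)^\perp$ computed in $\mod A/I$, but the direct verification via the canonical sequence is cleaner and avoids re-deriving a perpendicularity statement. The only thing to be slightly careful about is not to confuse $\Hom_A$ and $\Hom_{A/I}$; fullness of the subcategory $\mod A/I \subseteq \mod A$ makes this a non-issue.
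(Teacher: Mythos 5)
Your proof is correct, but it takes a genuinely different route from the paper's. The paper argues purely in terms of perpendicular categories: it computes $(\T\cap \mod A/I)^\perp$ inside $\mod A/I$ and shows it equals $\F\cap\mod A/I$, the key step being that a non-zero map $f\colon N\to M$ with $N\in\T$ and $IM=0$ satisfies $f(IN)=0$, hence descends to a non-zero map out of $N/IN$, which lies in $\T\cap\mod A/I$ since $\T$ is closed under quotients; so any $M\in\mod A/I$ not in $\F$ fails to be in $(\T\cap\mod A/I)^\perp$. You instead push the canonical sequence $0\to t_\T M\to M\to M/t_\T M\to 0$ into $\mod A/I$, using that submodules and quotients of $I$-annihilated modules are again $I$-annihilated, and combine this with Hom-vanishing (which is indeed free by fullness). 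That is a valid argument: if $\Hom$ vanishes from $\T\cap\mod A/I$ to $\F\cap\mod A/I$ and every $M\in\mod A/I$ admits such a sequence, then for $M\in(\T\cap\mod A/I)^\perp$ the inclusion $t_\T M\into M$ must be zero, so $M\cong M/t_\T M\in\F\cap\mod A/I$, and dually $\uperp(\F\cap\mod A/I)=\T\cap\mod A/I$. One small caveat: the corollary after Theorem \ref{rev} that you cite does not literally state this characterization (it only lists the equivalent descriptions of torsion pairs), so you should spell out the short deduction just given rather than treat it as a black box. In terms of trade-offs, your argument avoids the factorization-through-$N/IN$ trick and handles both halves of the pair symmetrically via the closure of $\mod A/I$ under subquotients, while the paper's argument is leaner in that, once $\T\cap\mod A/I$ is known to be a torsion class, it only has to identify its single perpendicular and never needs the functor $t_\T$.
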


\begin{proof} In this proof, when we write $\mathcal C^\perp$ or
  $\uperp \mathcal C$, we always intend it in the ambient category $\mod A$.
  
  Consider $(\T\cap \mod A/I)^\perp$. Clearly this contains $\F$. Now suppose we have some module $M\in \mod A/I$, $M\not\in \F$. There is therefore some $N\in \T$ and some non-zero 
  $f\in \Hom(N,M)\ne 0$. Since $IM=0$, we must have $f(IN)=0$, so $f$
  descends to a map in $\Hom(N/IN,M)$. But $N/IN\in 
  (\T\cap \mod A/I).$ This shows that in fact $M\not\in (\T\cap \mod A/I)^\perp$. We conclude that the torsion free class in $\mod A/I$ which corresponds to
  $\T\cap \mod A/I$ is $\F\cap \mod A/I$.
  \end{proof}

For $\T$ a torsion class in $\mod A$, write $\phibar(\T)$ for $\T\cap \mod A/I$.

\begin{proposition}[{\cite[Proposition 5.7(d)]{DIRRT}}] If $\phi$ is the quotient $A \onto A/I$, then
  $\phibar$ is a lattice quotient from $\tors A$ to $\tors A/I$.
\end{proposition}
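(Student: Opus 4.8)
### Proof proposal for the final statement

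The plan is to show that $\phibar\colon\tors A\to\tors A/I$ is surjective and that it commutes with arbitrary meets and arbitrary joins; this is exactly what is required for a complete lattice quotient. Surjectivity is the easiest part: given a torsion class $\mathcal{S}$ of $\mod A/I$, it is itself a subcategory of $\mod A$ (via the identification of $\mod A/I$ with the modules annihilated by $I$), and we may form $T(\mathcal{S})$, the smallest torsion class of $\mod A$ containing it, by Proposition~\ref{p1}. I would then check that $\phibar(T(\mathcal{S})) = \mathcal{S}$: the inclusion $\supseteq$ is clear, and for $\subseteq$ one observes that any $M\in T(\mathcal{S})$ lying in $\mod A/I$ is filtered by quotients of objects of $\mathcal{S}$, and those quotients, being quotients of $A/I$-modules, are again annihilated by $I$ and hence lie in $\mathcal{S}$ since $\mathcal{S}$ is closed under quotients; an extension argument within $\mod A/I$ then places $M$ in $\mathcal{S}$.

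Next I would verify that $\phibar$ preserves arbitrary meets. Since meets on both sides are given by intersection, for a family $\{\T_i\}$ we have $\phibar(\bigcap_i \T_i) = (\bigcap_i \T_i)\cap \mod A/I = \bigcap_i (\T_i \cap \mod A/I) = \bigwedge_i \phibar(\T_i)$, which is immediate. The preservation of arbitrary joins is the substantive point, and this is where I expect the main obstacle. One inclusion, $\phibar(\bigvee_i \T_i) \supseteq \bigvee_i \phibar(\T_i)$, is automatic from monotonicity. For the reverse inclusion I would argue as follows: take $M \in (\bigvee_i \T_i)\cap \mod A/I$ of minimal dimension lying outside $\bigvee_i \phibar(\T_i)$, mimicking the minimal-counterexample technique used in the proof of complete semidistributivity. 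Using the explicit description $\bigvee_i \T_i = T(\bigcup_i \T_i)$ from Proposition~\ref{p1}, $M$ has a filtration with subquotients that are quotients of objects from $\bigcup_i \T_i$; the top subquotient $M/M_1$ lies in $\mod A/I$ and is smaller, hence in $\bigvee_i \phibar(\T_i)$, while the bottom piece $M_1$ is a quotient of some object $N\in\T_i$ — but $M_1$ is annihilated by $I$, so $M_1$ is a quotient of $N/IN$, which lies in $\T_i\cap\mod A/I = \phibar(\T_i)$, hence $M_1 \in \phibar(\T_i)\subseteq \bigvee_i\phibar(\T_i)$. Closure under extensions of the torsion class $\bigvee_i\phibar(\T_i)$ then yields $M \in \bigvee_i\phibar(\T_i)$, a contradiction.

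The delicate step, and the one I would be most careful about, is the claim that a quotient of $N \in \T_i$ which happens to lie in $\mod A/I$ is a quotient of $N/IN$: this is the essential place where the hypothesis that we are quotienting by a two-sided ideal enters, exactly as in the proof of Proposition~\ref{dprop}, and it is what makes the minimal-counterexample induction close up. Once that is in hand, the three verifications — surjectivity, meet-preservation, join-preservation — together establish that $\phibar$ is a complete lattice quotient, completing the proof.
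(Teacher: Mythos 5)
Your proof is correct, but it follows a genuinely different route from the paper at the substantive step. The paper disposes of join-preservation in two lines: it invokes the identity $\bigvee_{\T\in S}\T=\uperp\bigl(\bigcap_{\T\in S}\T^\perp\bigr)$ and then applies Proposition \ref{dprop}, which says that intersecting a torsion pair of $\mod A$ with $\mod A/I$ yields a torsion pair of $\mod A/I$; since intersections commute with intersecting with $\mod A/I$, the restricted torsion pair of the join is the join of the restricted torsion pairs. You instead argue directly with the explicit description $\bigvee_i\T_i=T\bigl(\bigcup_i\T_i\bigr)$ from Proposition \ref{p1} and a minimal-dimension counterexample, factoring the surjection $N\onto M_1$ through $N/IN$ and closing up with extension-closure in $\mod A/I$ --- essentially re-running, in this special case, the same mechanism (maps from $\T$-modules into $I$-annihilated modules factor through $N/IN$) that powers the paper's Proposition \ref{dprop}. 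What the paper's route buys is brevity and reuse of the torsion-pair duality already in place; what your route buys is self-containedness (no appeal to \ref{dprop} beyond its underlying idea) and, notably, an explicit verification of surjectivity via $\phibar(T(\mathcal S))=\mathcal S$, a point the paper's proof passes over in silence even though it is part of its definition of lattice quotient and does require a small argument, since a torsion class of $\mod A/I$ is generally not extension-closed inside $\mod A$. One cosmetic remark: in your join argument the detour through $N/IN$ is not even needed, since $M_1$, being a quotient of $N\in\T_{i_1}$, already lies in $\T_{i_1}$, and then $M_1\in\T_{i_1}\cap\mod A/I=\phibar(\T_{i_1})$ because $M_1$ is a submodule of $M$ and hence annihilated by $I$; the factorization through $N/IN$ is where the ideal hypothesis genuinely matters in the paper's Proposition \ref{dprop}, but at this particular spot you get it for free.
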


\begin{proof} From the definition, it is clear that $\phibar$ respects the
  meet operation on $\tors A$. To see that $\phibar$ respects join, we
  recall that
  $$\bigvee_{\T\in S} \T= \uperp \left(\bigcap_{\T \in S} \T^\perp\right)$$
%REVISE THIS
%And $$(\bigcap_{\T\in S} \T^\perp)
%  
%  Now $$\phibar \left(\bigvee_{\T\in S} \T\right)=\phibar\left(
  and the result now follows from Proposition \ref{dprop}.
  \end{proof} 

We are interested in understanding this lattice quotient better. In
particular, we will address the question of when two torsion classes
in $\mod A$ have the same image under this quotient. For this purpose,
we need the following lemma.

\begin{lemma}\label{three} For $\U \leq \V$ in $\tors A$, the following are
  equivalent:\begin{enumerate}
  \item $\U < \V$,
  \item $\U^\perp \cap \V \ne \{0\}$,
  \item $\U^\perp \cap \V$ contains a brick.\end{enumerate}
\end{lemma}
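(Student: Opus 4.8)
The plan is to prove the cycle of implications $(3)\Rightarrow(2)\Rightarrow(1)\Rightarrow(3)$, since $(3)\Rightarrow(2)$ is immediate (a brick is a nonzero module) and $(2)\Rightarrow(1)$ is nearly so: if $\U=\V$ then $\U^\perp\cap\V=\V^\perp\cap\V=\{0\}$, because any object lying in both a torsion-free class and the corresponding torsion class must be zero (it has no nonzero map to itself). So the real content is $(1)\Rightarrow(3)$: assuming $\U<\V$ strictly, produce a brick in $\U^\perp\cap\V$.

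First I would pick some module $X\in\V\setminus\U$, and among all such choose one of minimal dimension. The strategy is to show that this minimal $X$ is in fact a brick lying in $\U^\perp$. For the brick part, suppose $X$ has a nonzero non-invertible endomorphism $f$; then $f(X)$ and $X/f(X)$ are both quotients of $X$, hence both in $\V$, and both have strictly smaller dimension than $X$, so by minimality both lie in $\U$; since $\U$ is closed under extensions and $X$ is an extension of $X/f(X)$ by $f(X)$, we get $X\in\U$, a contradiction. Hence $X$ is a brick. This is exactly the argument already used in the proof of Lemma~\ref{lone}, so it should go through verbatim.

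The genuinely new step — and the one I expect to be the main obstacle — is showing $X\in\U^\perp$, i.e.\ that there are no nonzero maps from any object of $\U$ into $X$. Here I would use the torsion pair machinery: let $t_\U X\subseteq X$ be the maximum submodule of $X$ lying in $\U$ (it exists by the proposition on $t_\T X$). Since $\U$ is a submodule-closed... no: $\U$ is a torsion class, so $t_\U X$ is the maximal submodule in $\U$, and $X/t_\U X\in\U^\perp$ by the earlier proposition. The point is that $\Hom(N,X)\neq0$ for some $N\in\U$ would force $\im$ of that map to be a nonzero submodule of $X$ lying in $\U$ (being a quotient of $N\in\U$), so it suffices to rule out nonzero submodules of $X$ belonging to $\U$; equivalently, to show $t_\U X=0$. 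Now $t_\U X$ is a submodule of $X$ in $\V$ (since $t_\U X\in\U\subseteq\V$) of dimension at most $\dim X$; if $t_\U X=X$ then $X\in\U$, contradiction, so $t_\U X$ is a proper submodule — but that does not yet make it zero. To finish, I would instead consider the quotient $X/t_\U X$: it is a quotient of $X\in\V$, hence in $\V$; and it lies in $\U^\perp$. If $X/t_\U X\neq0$, then since $X\notin\U$ we'd want $X/t_\U X\notin\U$ as well — indeed if $X/t_\U X\in\U$ then, $X$ being an extension of $X/t_\U X$ by $t_\U X$ with both in $\U$, we'd get $X\in\U$, contradiction. So $X/t_\U X$ is a nonzero element of $\V\setminus\U$ lying in $\U^\perp$, and replacing $X$ by $X/t_\U X$ (which has dimension $\le\dim X$, with equality only if $t_\U X=0$) and re-running the minimality argument, we may as well assume from the start that our minimal counterexample $X$ already satisfies $X\in\U^\perp$. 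Combined with the brick argument above, $X$ is a brick in $\U^\perp\cap\V$, proving $(3)$.

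In writing this up I would streamline the two minimality arguments into one: choose $X$ of minimal dimension in $\V\setminus\U$, observe first that $X/t_\U X$ is again in $\V\setminus\U$ with dimension $\le\dim X$ so by minimality $t_\U X=0$ and hence $X\in\U^\perp$, then run the endomorphism argument to conclude $X$ is a brick. The only subtlety to be careful about is that all the $\Hom$-orthogonality is taken in $\mod A$ (no ambient-category confusion arises here since we never pass to $A/I$), and that "minimal-dimensional counterexample" arguments are legitimate because every module in $\mod A$ is finite-dimensional.
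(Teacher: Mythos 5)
Your proof is correct, and it reorganizes the argument rather than following the paper's decomposition. The paper splits the nontrivial content in two: (1)$\Rightarrow$(2) is done with no minimality argument at all, by taking an arbitrary $X\in\V\setminus\U$ and observing that $X/t_\U X$ is a nonzero module in $\U^\perp\cap\V$; then (2)$\Rightarrow$(3) takes a nonzero $X$ of minimal dimension in $\U^\perp\cap\V$ and, if $X$ is not a brick, uses a nonzero non-invertible endomorphism $f$ to note that $f(X)$ is simultaneously a quotient of $X$ (hence in $\V$) and a submodule of $X$ (hence in $\U^\perp$, by submodule-closure of torsion-free classes), producing a smaller nonzero module of $\U^\perp\cap\V$, a contradiction. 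You instead prove (1)$\Rightarrow$(3) in one stroke by minimizing dimension over $\V\setminus\U$: the canonical sequence $0\rightarrow t_\U X\rightarrow X\rightarrow X/t_\U X\rightarrow 0$ together with minimality forces $t_\U X=0$, hence $X\in\U^\perp$, and for the brick property you reuse the extension-closure argument of Lemma \ref{lone} ($f(X)$ and $X/f(X)$ are nonzero quotients of strictly smaller dimension lying in $\V$, hence in $\U$ by minimality, forcing $X\in\U$). Both proofs are complete and draw on the same ingredients (the torsion radical $t_\U$, finite-dimensionality, and the non-invertible endomorphism trick); the difference is where the minimality is spent and which closure property detects the brick (your extension-closure of $\U$ versus the paper's submodule-closure of $\U^\perp$ combined with quotient-closure of $\V$). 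The paper's route has the mild advantage of isolating (2)$\Rightarrow$(3) as a self-contained, reusable observation — a nonzero module of minimal dimension in the intersection of a torsion-free class with a torsion class is a brick — while yours is a single induction and slightly more economical; your handling of the easy implications, including the remark that $\V\cap\V^\perp=\{0\}$, matches what the paper leaves as ``obvious.''
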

\begin{proof} The implications (3) implies (2) and (2) implies (1) are obvious.

  To see that (1) implies (2), let $X\in \V \setminus \U$. We have a
  short exact sequence
  $$ 0 \rightarrow t_\U X \rightarrow X \rightarrow X/t_\U X \rightarrow 0.$$
  Since $X\not\in \U$, we know that $t_\U X \ne X$, so $X/t_\U X$ is a non-zero module in $\U^\perp$.
  On the other hand, $X\in \V$, so $X/t_\U X$ is also. Thus $X/t_\U X$
  witnesses (2).

  We now show that (2) implies (3). Suppose that $X\in \U^\perp \cap \V$,
  and suppose that the dimension of $X$ is minimal among non-zero modules
  in $\U^\perp \cap \V$. 
  If $X$ is a brick, we are done, so suppose that $X$ is not a brick.
  It therefore has a non-invertible non-zero endomorphism $f$. Let
  $Y=f(X)$. Now $Y$ is at the same time a quotient and a submodule of $X$.
  Since $Y$ is a quotient of $X$, we know that $Y\in \V$. On the other hand,
  since $Y$ is a submodule of $X$, we know that $Y\in \U^\perp$. Therefore
  $Y$ is an element of $\U^\perp \cap \V$ of dimension smaller than $X$,
  contradicting our choice of $X$. Thus $X$ must have been a brick.
  \end{proof}

From Lemma \ref{three}, the following proposition is immediate:

\begin{proposition}[{\cite[Theorem 5.15(b)]{DIRRT}}]\label{equal} For $\U\leq \V$ in $\tors A$, $\phibar (\U)=\phibar (\V)$
  if and only if $\U^\perp\cap\V$ contains no modules annihilated by $I$,
  or equivalently
  contains no bricks annihilated by $I$.
  \end{proposition}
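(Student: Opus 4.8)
The plan is to read off the statement directly from Lemma~\ref{three} applied inside a suitable torsion class, together with the explicit description of $\phibar$ as intersection with $\mod A/I$. First I would observe that $\phibar$ is order-preserving and that $\U \le \V$ forces $\phibar(\U) \le \phibar(\V)$, so the content is to decide when this inequality is an equality. I would like to reduce to the criterion "$\phibar(\U) < \phibar(\V)$" and characterize \emph{that}; the proposition then follows by negation.

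The key step is to relabel Lemma~\ref{three}. That lemma, stated for torsion classes in $\mod A$, applies verbatim to torsion classes in $\mod A/I$: for $\phibar(\U) \le \phibar(\V)$ in $\tors A/I$, we have $\phibar(\U) < \phibar(\V)$ if and only if $\phibar(\U)^{\perp} \cap \phibar(\V)$ (perpendicular taken in $\mod A/I$) contains a nonzero module, if and only if it contains a brick of $A/I$. So I would first check that $\phibar(\U) < \phibar(\V)$ is equivalent to the existence of a nonzero $X \in \mod A/I$ with $X \in \V$ (hence $X \in \phibar(\V)$) and $\Hom_{A/I}(\U \cap \mod A/I,\, X) = 0$. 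The second equivalent form then says: a brick $B$ of $A/I$ with $B \in \V$ and $\Hom_{A/I}(\phibar(\U),B)=0$.

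The one genuine point to verify — and the step I expect to be the main (mild) obstacle — is that for a module $X$ annihilated by $I$, the condition $X \in \U^{\perp}$ computed in $\mod A$ coincides with $X \in \phibar(\U)^{\perp}$ computed in $\mod A/I$. One inclusion is trivial since $\phibar(\U) \subseteq \U$. For the other, suppose $\Hom_{A/I}(\phibar(\U),X)=0$ but some $N \in \U$ admits a nonzero map $f \colon N \to X$ in $\mod A$; since $IX = 0$, $f$ kills $IN$ and factors through $N/IN \in \U \cap \mod A/I = \phibar(\U)$, contradicting $\Hom_{A/I}(\phibar(\U),X)=0$. (This is the same factoring argument used in the proof of Proposition~\ref{dprop}.) With this identification in hand, $\phibar(\U) < \phibar(\V)$ is equivalent to $\U^{\perp} \cap \V$ containing a nonzero module annihilated by $I$, equivalently — running the minimal-dimension argument of Lemma~\ref{three}(2)$\Rightarrow$(3) \emph{within the subcategory of $I$-annihilated modules}, which is closed under the submodules and quotients used there — a brick annihilated by $I$. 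Negating both sides gives exactly the statement: $\phibar(\U) = \phibar(\V)$ iff $\U^{\perp} \cap \V$ contains no module annihilated by $I$, equivalently no brick annihilated by $I$.
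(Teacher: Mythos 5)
Your proof is correct and follows essentially the same route as the paper, which treats the proposition as immediate from Lemma~\ref{three} applied in $\mod A/I$ together with Proposition~\ref{dprop}; the identification of $\phibar(\U)^{\perp}$ (in $\mod A/I$) with $\U^{\perp}\cap \mod A/I$ that you verify by the $N/IN$ factoring argument is exactly Proposition~\ref{dprop} and could simply be cited. Your remaining elaborations (rerunning the minimal-dimension step among $I$-annihilated modules, i.e.\ observing that bricks of $A/I$ are precisely bricks of $A$ killed by $I$) are harmless and consistent with the paper's intent.
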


Another way to formulate the proposition is that if $\U\leq \V$, then
$\phibar (\U) \ne \phibar (\V)$ precisely if there is some module in
$\U^\perp \cap \V$ which is annihilated by $I$.

Also, we have the following proposition. We write $\hat\gamma_A$ and
$\hat\gamma_{A/I}$ for the labellings associated to covers in
$\tors A$ and $\tors A/I$, respectively. 

\begin{proposition}[{\cite[Theorem 5.15(a)]{DIRRT}}] \label{agree} If $\Y \gtrdot \X$ in $\tors A$ and
  $\phibar(\Y) \gtrdot \phibar(\X)$ in $\tors A/I$, then 
  $\hat\gamma_A(\Y\gtrdot \X)=\hat\gamma_{A/I}(\phibar(\Y)\gtrdot \phibar(\X))$.
\end{proposition}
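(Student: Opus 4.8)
The plan is to use the direct characterization of the brick label $\hat\gamma$ that was just established, namely that $\hat\gamma(\Y\gtrdot\X)$ is the unique brick contained in $\Y$ but not in $\X$ (and the analogous statement in $\tors A/I$). So let $B=\hat\gamma_A(\Y\gtrdot\X)$; this is a brick of $A$ with $B\in\Y$ and $B\notin\X$. What I need to show is that $B$ is a brick of $A/I$ (equivalently, $IB=0$ and $B$ remains a brick, which is automatic since being a brick is intrinsic to the module) and that $B\in\phibar(\Y)=\Y\cap\mod A/I$ but $B\notin\phibar(\X)=\X\cap\mod A/I$; by the uniqueness in the proposition characterizing $\hat\gamma_{A/I}$ applied to the cover $\phibar(\Y)\gtrdot\phibar(\X)$, this forces $\hat\gamma_{A/I}(\phibar(\Y)\gtrdot\phibar(\X))=B$, which is the claim.

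First I would invoke Proposition \ref{equal}: since $\X\leq\Y$ but $\phibar(\X)\neq\phibar(\Y)$ (they form a cover, hence are distinct), there must be a module, indeed a brick, in $\X^\perp\cap\Y$ that is annihilated by $I$. So there exists a brick $B'$ of $A$ with $B'\in\Y$, $B'\in\X^\perp$, and $IB'=0$. Second, I would observe that $B'\notin\X$: if $B'$ were in $\X$, then since also $B'\in\X^\perp$ we would have $\Hom(B',B')=0$, contradicting that $B'$ is a nonzero module (a brick has $\Hom(B',B')\neq 0$). Hence $B'$ is a brick lying in $\Y$ but not in $\X$, and by the uniqueness clause of the proposition defining $\hat\gamma_A$, we get $B'=\hat\gamma_A(\Y\gtrdot\X)=B$. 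In particular $IB=0$.

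Third, with $IB=0$ in hand, $B$ is a module for $A/I$, and being a brick is a property of the endomorphism ring, which does not change on passing to $A/I$; so $B$ is a brick of $A/I$. Moreover $B\in\Y\cap\mod A/I=\phibar(\Y)$ and $B\notin\X\cap\mod A/I=\phibar(\X)$ (the latter because $B\notin\X$ already). Applying the proposition characterizing $\hat\gamma_{A/I}$ to the cover $\phibar(\Y)\gtrdot\phibar(\X)$ in $\tors A/I$, the unique brick of $A/I$ in $\phibar(\Y)$ but not in $\phibar(\X)$ is $\hat\gamma_{A/I}(\phibar(\Y)\gtrdot\phibar(\X))$; since $B$ is such a brick, the two agree, completing the proof.

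I do not expect a serious obstacle here; the work was really done in Lemma \ref{three} and Proposition \ref{equal}. The only point requiring a little care is making sure $B$ is a brick \emph{as an $A/I$-module} — but this is immediate once $IB=0$, since $\Hom_{A/I}(B,B)=\Hom_A(B,B)$ for a module killed by $I$, so the endomorphism ring (and hence the brick property) is unchanged. The mild subtlety of matching $B'$ from Proposition \ref{equal} with the canonical brick label $B$ is handled by the uniqueness of the brick in $\Y\setminus\X$, which is exactly the content of the proposition preceding this one.
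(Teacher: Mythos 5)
Your proof is correct and follows essentially the same route as the paper: both arguments use Proposition \ref{equal} to conclude that the unique brick labelling the cover $\Y\gtrdot\X$ must be annihilated by $I$, and then identify it with the label in $\tors A/I$ by the uniqueness of the labelling brick for the cover $\phibar(\Y)\gtrdot\phibar(\X)$. The only (cosmetic) difference is that you phrase uniqueness via the characterization ``unique brick in $\Y$ but not in $\X$'' and add the small check that the brick from Proposition \ref{equal} lies outside $\X$, whereas the paper works directly with the unique brick in $\X^\perp\cap\Y$ and in $\phibar(\X)^\perp\cap\phibar(\Y)$.
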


\begin{proof} If $\Y\gtrdot\X$ in $\tors A$, then there is a unique brick
  from $\mod A$ in  $\X^\perp \cap \Y$, namely $\hat\gamma_A(\Y\gtrdot\X)$.
  Given that
  $\phibar(\Y)\ne \phibar(\X)$, this brick must in fact lie in
  $\mod A/I$. It is therefore the unique brick in $
  \phibar(\X)^\perp \cap \phibar(\Y)$, and thus equals $\hat\gamma_{A/I}(\Y\gtrdot\X)$.
\end{proof}

\begin{example}[Type $A_2$] Let $A=kQ$, where $Q=1\leftarrow 2$. Let
  $I$ be the ideal of $A$ generated by the arrow. $A/I$ is the path algebra
  of two vertices and no arrows; $\tors \mod A/I$ is as follows:

  $$  \begin{tikzpicture}[yscale=1.5] \node (a) at (0,0) {$\langle[01],[10]\rangle$};
    \node (b) at (-1,-1) {$\langle [01]\rangle$};
    \node (c) at (1,-1) {$\langle [10]\rangle$};
    \node (d) at (0,-2) {$0$};
    \draw (a) -- node [left] {\color{blue}$[10]$} (b) -- node [left] {\color{blue}$[01]$} (d) -- node[right] {\color{blue} $[10]$} (c) -- node[right] {\color{blue} $[01]$} (a);
  \end{tikzpicture} $$

  We see that it is obtained from the lattice $\tors A$ by identifying
  the two torsion classes $\langle[01],[11]\rangle$ and $\langle[01]\rangle$,
  which differ only in modules which are not in $\mod A/I$. We further see
  that the labels of the cover relations which remain cover relations in
  $\tors A/I$ receive the same labels as cover relations in
  $\tors A$ and as cover relations in $\tors A/I$, consistent with
  Proposition \ref{agree}.
  \end{example}

In the next section, we will see how to combine Proposition \ref{equal}  with the
labelling $\hat\gamma$. In order to do that, we need another important
structural result about $\tors A$.

\section{$\tors A$ is weakly atomic}
 
A lattice is called weakly atomic if in any interval $[u,v]$ with $u<v$,
there is some pair of elements $x,y$ with $x\lessdot y$. (This property
is referred to as arrow-separatedness in the current version of \cite{DIRRT} and
as cover-separatedness in the current version of \cite{RST}, but they will be
updated to reflect the standard terminology.)
The interval $[0,1]$ in $\mathbb R$, with the usual order, is an example
of a lattice which is not weakly atomic (since it has no cover relations
at all).

In this section, we will prove the following two theorems.

\begin{thm}[\cite{DIRRT}]\label{tha} $\tors A$ is weakly atomic. \end{thm}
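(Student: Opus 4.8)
The plan is to show that in any interval $[\U,\V]$ with $\U < \V$, there is a cover relation, and the natural candidate is built from a module of minimal dimension witnessing the strict inequality. By Lemma \ref{three}, since $\U < \V$, the category $\U^\perp \cap \V$ contains a brick; let $B$ be a brick of minimal dimension in $\U^\perp \cap \V$. The idea is that $T(B)$ — or rather its join with $\U$ — should sit just above $\U$ inside $[\U,\V]$. More precisely, I would consider the torsion class $\V' = \U \vee T(B)$, which lies in $[\U,\V]$ since $B \in \V$ and $\V$ is a torsion class, and I would try to show either that $\V' \gtrdot \U$, or, if not, locate a cover somewhere between $\U$ and $\V'$.

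First I would observe that $B \notin \U$ (as $B \in \U^\perp \cap \V$ and $B$ is nonzero, so $\Hom(B,B) \neq 0$ forces $B \notin \U$... more carefully, $B \in \U^\perp$ and $B \neq 0$ means $B \notin \U$ since $\Hom(B,B)\ne0$), hence $\U < \V'$. Now I want to understand the interval $[\U, \V']$. A torsion class $\W$ with $\U \leq \W < \V'$ cannot contain $B$: if it did, it would contain $T(B)$ and hence contain $\U \vee T(B) = \V'$. Conversely I would like to show that the largest torsion class in $[\U,\V']$ not containing $B$ is itself a torsion class covered by $\V'$. The candidate is $\W_0 = \V' \cap \uperp(F(B)) \vee$ ... actually, following the proof of Theorem \ref{cjib} closely, recall that $T(B)_* = T(B) \cap \uperp F(B)$ is the unique maximal torsion class strictly below $T(B)$. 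The natural guess is that $\W_0 := \U \vee T(B)_*$ is the element covered by $\V'$ — equivalently, that $[\W_0, \V']$ has no strictly intermediate torsion class.

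To prove $\V' \gtrdot \W_0$, suppose $\W_0 \le \W \le \V'$ with $\W \neq \W_0$; I must show $\W = \V'$. Since $\W \supsetneq \W_0 = \U \vee T(B)_*$, I want to conclude $B \in \W$, which then gives $T(B) \subseteq \W$, hence $\V' = \U \vee T(B) \subseteq \W$. To get $B \in \W$: take a module $X \in \W \setminus \W_0$ of minimal dimension. Since $X \in \V'$ and $X \notin \U$ (as $\U \subseteq \W_0$), consider the short exact sequence $0 \to t_\U X \to X \to X/t_\U X \to 0$; replacing $X$ by $X/t_\U X$ (which still lies in $\W$ and $\V'$, and is not in $\W_0$, and has dimension $\le \dim X$ with equality only if $t_\U X = 0$), I may assume $X \in \U^\perp$. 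Then $X \in \U^\perp \cap \V' \subseteq \U^\perp \cap \V$; if $X$ is not a brick, the image $f(X)$ of a non-invertible endomorphism is a smaller module in $\W \cap \U^\perp$, still outside $\W_0$ (one needs to check: $f(X)$ is a quotient of $X \in \W$ so in $\W$; if $f(X) \in \W_0$ and $X/f(X) \in \W_0$ then $X \in \W_0$; $X/f(X)$ has smaller dimension so is in $\W_0$ by minimality — wait, $X/f(X)$ need not be in $\U^\perp$; but it is a quotient of $X \in \W$, hence in $\W$, and of smaller dimension, hence in $\W_0$ by minimality of $X$ in $\W \setminus \W_0$; so indeed $f(X) \notin \W_0$ and we get a contradiction), so $X$ is a brick. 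Then $X \in \U^\perp \cap \V$ is a brick, and by minimality of $B$, either $X$ has the same dimension as $B$ or the argument needs the uniqueness of bricks up to the structure — here I should invoke Lemma \ref{slem}: $X \in T(B)$? Not obviously. The cleaner route: $X \in \W$, $X \notin \W_0 \supseteq T(B)_*$, but that doesn't directly say $X \in T(B)$ either. So the main obstacle is closing this gap — showing that the minimal brick $X \in \U^\perp \cap \V$ lying in $\W$ forces $B \in \W$.

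I expect the fix is to choose $B$ itself more cleverly: given $\U < \V$, the second theorem (which the excerpt says will also be proved) presumably gives a cover relation via $\gamma$ applied to a well-chosen cover; but to bootstrap it, I would instead argue abstractly. Since $\tors A$ is a complete lattice, the set $\{\W \in [\U,\V] : B \notin \W\}$ is closed under arbitrary joins (a join of torsion classes is generated by their union via $T(-)$; if $B$ were in the join, then $B$, being a brick, would by Lemma \ref{slem} admit a surjection from some module in the union, forcing $B$ into one of them — this is exactly the kind of argument in Theorem \ref{cjib}), so it has a maximum element $\W_{\max}$. Then any $\W$ with $\W_{\max} < \W \le \V$ must contain $B$, hence contains $T(B)$, hence contains $\W_{\max} \vee T(B)$; and $\W_{\max} \vee T(B) \le \V$. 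I claim $\W_{\max} \vee T(B) \gtrdot \W_{\max}$: any $\W$ strictly above $\W_{\max}$ and below $\W_{\max}\vee T(B)$ contains $B$ (by maximality of $\W_{\max}$), hence contains $T(B)$, hence equals $\W_{\max}\vee T(B)$. This gives the desired cover relation inside $[\U,\V]$, so $\tors A$ is weakly atomic. The one remaining verification — that $\{\W \in [\U,\V]: B\notin\W\}$ is closed under arbitrary joins — is precisely the join-irreducibility argument already used in the proof of Theorem \ref{cjib}, and will be the technical heart; everything else is formal.
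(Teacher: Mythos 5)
Your overall strategy (produce a cover inside $[\U,\V]$ labelled by a brick $B\in\U^\perp\cap\V$, which exists by Lemma \ref{three}) is the same as the paper's, but the step you yourself identify as ``the technical heart'' of your second argument is false. The set $\{\mathcal W\in[\U,\V] : B\notin\mathcal W\}$ is \emph{not} closed under arbitrary joins: that would say $T(B)$ is completely join prime, and completely join irreducible elements of $\tors A$ need not be join prime. Lemma \ref{slem} does not help here -- it controls morphisms out of modules that lie in $T(B)$, and says nothing about how $B$ can be filtered inside a join $T\bigl(\bigcup_{\mathcal W\in S'}\mathcal W\bigr)$; a brick in the join need not be a quotient of anything in the union. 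The paper's own type $A_2$ example already refutes the claim: take $\U=0$, $\V=\mod A$, $B=[11]$; neither $\langle[10]\rangle$ nor $\langle[01]\rangle$ contains $B$, yet their join is $\mod A\ni B$, via the filtration $0\subset[10]\subset[11]$. Your first candidate is also wrong as stated, not merely unproven: $\U\vee T(B)_*$ need not be covered by $\U\vee T(B)$ -- in the same example take $\U=\langle[01]\rangle$ and $B=[10]$, so $\U\vee T(B)_*=\langle[01]\rangle$, $\U\vee T(B)=\mod A$, and $\langle[11],[01]\rangle$ sits strictly in between.

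The good news is that the surrounding architecture is sound and the gap is repairable. Your cover-producing step only needs a \emph{maximal} (not maximum) element of $\{\mathcal W\in[\U,\V]:B\notin\mathcal W\}$: the union of a chain of torsion classes is a torsion class lying in $[\U,\V]$ and contains $B$ only if some member does, so Zorn's lemma gives a maximal element $\mathcal W_{\max}$, and then your argument that $\mathcal W_{\max}\vee T(B)\gtrdot\mathcal W_{\max}$ inside $[\U,\V]$ goes through verbatim. The paper avoids Zorn by exhibiting the covered element explicitly (Proposition \ref{rcjib}): for a brick $B\in\U^\perp\cap\V$, every torsion class $\mathcal W$ with $\U\leq\mathcal W\subsetneq\U\vee T(B)$ is contained in $\uperp F(B)$ -- filter a module of $\U\vee T(B)$ with subquotients in $\U$ or $T(B)$ and use $B\in\U^\perp$ together with (the proof of) Lemma \ref{slem} to see that any nonzero map to $B$ is surjective -- so $(\U\vee T(B))\cap\uperp F(B)$ is the unique maximal torsion class of the interval strictly below $\U\vee T(B)$, which is therefore completely join irreducible in $[\U,\V]$ and covers it. Either repair yields weak atomicity, but as written your proof has a genuine gap at exactly the point you flagged as its heart.
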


\begin{thm}[\cite{DIRRT}]\label{thb} Let $\phi:A\onto A/I$ be an algebra quotient. For
  $\U\subseteq \V$, we have that $\phibar(\U)=\phibar(\V)$ iff all covers in
  $[\U,\V]$ are labelled by bricks which are not annihilated by $I$.\end{thm}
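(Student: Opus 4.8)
The plan is to prove the two implications separately, with the forward one essentially immediate. Suppose $\phibar(\U)=\phibar(\V)$ and let $\X\lessdot\Y$ be any cover in $[\U,\V]$. Since $\phibar$ is order-preserving (it is a lattice quotient) and $\U\leq\X<\Y\leq\V$, we get $\phibar(\X)=\phibar(\Y)$. By Lemma~\ref{three} applied to $\X<\Y$, the set $\X^\perp\cap\Y$ contains a brick; such a brick cannot lie in $\X$, so by the description of $\hat\gamma$ from Section~\ref{cons} it must equal $\hat\gamma(\Y\gtrdot\X)$, and hence $\hat\gamma(\Y\gtrdot\X)\in\X^\perp\cap\Y$. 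Now Proposition~\ref{equal} (using $\phibar(\X)=\phibar(\Y)$) says $\X^\perp\cap\Y$ contains no module annihilated by $I$, so $\hat\gamma(\Y\gtrdot\X)$ is not annihilated by $I$.

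For the converse I would argue contrapositively: assuming $\phibar(\U)\neq\phibar(\V)$, hence $\phibar(\U)<\phibar(\V)$, I must produce a single cover in $[\U,\V]$ whose label is a brick annihilated by $I$. Proposition~\ref{equal} hands me a brick $B\in\U^\perp\cap\V$ with $IB=0$. Put $\Y=\U\vee T(B)$ and $\Z_0=\Y\cap\uperp F(B)$. Because $B\in\U^\perp\cap\V$ one checks readily that $B\in\Y$, $B\notin\U$, $T(B)\subseteq\V$, and $\U\leq\uperp F(B)$ (the last since $\Hom(\U,F(B))=0$), while $B\notin\Z_0$ because $B\in F(B)$; thus $\U\leq\Z_0<\Y\leq\V$. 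The plan is then to show $\Z_0$ is the \emph{unique} coatom of the interval $[\U,\Y]$, i.e.\ $\Z_0\lessdot\Y$. Granting this, the cover $\Z_0\lessdot\Y$ lies inside $[\U,\V]$, and since $B$ is a brick contained in $\Y$ but not in $\Z_0$, the description of $\hat\gamma$ forces $\hat\gamma(\Y\gtrdot\Z_0)=B$, which is annihilated by $I$ --- the contradiction I want. (Note that this route does not logically require weak atomicity; Theorem~\ref{tha} reassures us more generally that labelling arguments of this kind always have cover relations to work with.)

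Everything thus reduces to the claim that \emph{every torsion class $\Z$ with $\Z\leq\Y$ and $B\notin\Z$ is contained in $\uperp F(B)$}, hence in $\Z_0$; this is the step I expect to be the main obstacle, though it is of the same flavour as the brick arguments in the proof of Theorem~\ref{cjib}. Granting it, the rest is bookkeeping: every $\Z$ with $\U\leq\Z<\Y$ has $B\notin\Z$ (else $T(B)\subseteq\Z$ and so $\Y=\U\vee T(B)\subseteq\Z$), so $\Z_0$ is the largest torsion class in $[\U,\Y)$; and any $\W$ with $\Z_0<\W\leq\Y$ then contains $B$, hence contains $\U\vee T(B)=\Y$, so $\W=\Y$, i.e.\ $\Z_0\lessdot\Y$. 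To prove the claim I would first recall, as in the proof of Theorem~\ref{cjib}, that $\uperp F(B)=\uperp B$, reducing the goal to $\Hom(\Z,B)=0$. If this failed, pick $X\in\Z$ of minimal dimension admitting a nonzero map $f:X\rightarrow B$; minimality forces $f$ to be injective, since otherwise $\im f\in\Z$ would be a strictly smaller such module. Now $X\in\Z\subseteq\Y=\U\vee T(B)$, so unwinding the descriptions in Proposition~\ref{p1} gives $X$ a proper filtration $0=X_0\subsetneq X_1\subsetneq\cdots\subsetneq X_r=X$ whose subquotients are modules in $\U$ or quotients of $B$. The nonzero module $X_1$ is a submodule of $X\subseteq B$; if $X_1$ lay in $\U$ this would contradict $B\in\U^\perp$, while if $X_1$ were a quotient $B\onto X_1$, composing $B\onto X_1\into B$ and using that $B$ is a brick forces this endomorphism of $B$ to be invertible, so $X_1=B$, hence $X=B$, hence $B\in\Z$ --- a contradiction. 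This establishes the claim, and with it the theorem.
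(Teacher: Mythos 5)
Your proof is correct, but it does not follow the paper's (very short) route, so a comparison is in order. The paper deduces Theorem~\ref{thb} in one line by combining Proposition~\ref{equal} with Proposition~\ref{interval}: the labels of the covers inside $[\U,\V]$ are exactly the bricks of $\U^\perp\cap\V$, and $\phibar(\U)=\phibar(\V)$ holds iff none of those bricks is annihilated by $I$. You never invoke Proposition~\ref{interval}; instead you re-derive the two halves of it that you need. For the forward direction you apply Proposition~\ref{equal} cover-by-cover (using that $\phibar$ is order-preserving to get $\phibar(\X)=\phibar(\Y)$ on each cover, and Lemma~\ref{three} plus uniqueness of the brick label to place $\hat\gamma(\Y\gtrdot\X)$ in $\X^\perp\cap\Y$), which is in substance the ``no other labels'' half of Proposition~\ref{interval}. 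For the converse you redo the construction underlying Proposition~\ref{rcjib}: given the brick $B\in\U^\perp\cap\V$ with $IB=0$ supplied by Proposition~\ref{equal}, you form $\Y=\U\vee T(B)$ and $\Z_0=\Y\cap\uperp F(B)$ and show $\Z_0\lessdot\Y$ inside $[\U,\V]$ with label $B$. The one genuinely independent ingredient is your proof of the key containment (any torsion class $\Z\leq\Y$ with $B\notin\Z$ lies in $\uperp F(B)$): you prove it directly by refining a filtration of a minimal-dimensional $X\in\Z$ with $\Hom(X,B)\neq 0$ into subquotients lying in $\U$ or quotients of $B$, i.e.\ you establish a relative version of Asai's Lemma~\ref{slem} from scratch, whereas the paper's Proposition~\ref{rcjib} simply cites Lemma~\ref{slem} (which, as stated, applies to modules of $T(B)$ rather than of $\U\vee T(B)$, so your argument is if anything more careful at exactly that point). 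What the paper's route buys is brevity and reusability of Proposition~\ref{interval}; what yours buys is a self-contained argument that only needs Propositions~\ref{p1}, \ref{equal}, Lemma~\ref{three} and the description of $\hat\gamma$, and, as you note, no appeal to weak atomicity.
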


On the way to proving these theorems, we first prove
the following proposition, which can be viewed as a relative
version of Theorem \ref{cjib}.

\begin{proposition}[{\cite[Theorem 3.4]{DIRRT}}] \label{rcjib} Let $\U\leq \V$ be two torsion classes.
  The map 
  $B\to T(B)\vee \U$ is a bijection from $\br \!(\U^\perp \cap \V)$
  to $\cji [\U,\V]$.
  \end{proposition}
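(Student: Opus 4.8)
The plan is to adapt the proof of Theorem~\ref{cjib} to the relative setting, replacing $\tors A$ by the interval $[\U,\V]$ and $\br A$ by $\br(\U^\perp\cap\V)$. The key observation is that for $B\in\br(\U^\perp\cap\V)$, the torsion class $T(B)\vee\U$ lies in $[\U,\V]$: it contains $\U$ by construction, and it is contained in $\V$ because $B\in\V$ and $\U\subseteq\V$, so $T(B)\vee\U\subseteq\V$. I would then verify that this map is a bijection onto $\cji[\U,\V]$ in three steps, mirroring the absolute case.

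First I would show $T(B)\vee\U$ is completely join irreducible in $[\U,\V]$. The candidate for the unique maximal element strictly below it is $(T(B)\cap\uperp F(B))\vee\U$; one checks this is $\ne T(B)\vee\U$ precisely because $B\in\U^\perp\cap\V$ guarantees $B\notin\U$ (so $B$ is genuinely ``new''), and then the argument of Theorem~\ref{cjib} using Lemma~\ref{slem} shows any $\W\in[\U,\V]$ with $\W<T(B)\vee\U$ cannot contain $B$, hence cannot contain any module surjecting onto $B$, hence $\W\subseteq\uperp F(B)$, so $\W\subseteq(T(B)\cap\uperp F(B))\vee\U$. (Here I should be slightly careful: ``$\W$ does not contain $B$'' needs that $B\in T(B)\vee\U$ but $B\notin\W$; if $B\in\W$ then $T(B)\subseteq\W$ so $T(B)\vee\U\subseteq\W$, contradiction.) Second, for surjectivity onto $\cji[\U,\V]$, I would use the relative version of Lemma~\ref{lone}: any $\W\in[\U,\V]$ equals $\bigvee_{B\in\W\cap\br A}T(B)$, hence $\W=\W\vee\U=\bigvee_{B\in\W\cap\br A}(T(B)\vee\U)$; the bricks $B\in\W$ that already lie in $\U$ contribute $T(B)\vee\U=\U$, so $\W$ is the join over $[\U,\V]$ of the completely join irreducibles $T(B)\vee\U$ for $B\in\W\cap\br(\U^\perp\cap\V)$ (using that $B\in\W\subseteq\V$ and, if $B\notin\U^\perp$, then... ) — this needs the claim that a brick in $\W$ either lies in $\U$ or lies in $\U^\perp$, which follows because $t_\U B$ is a submodule of the brick $B$, so $t_\U B$ is $0$ or $B$. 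Hence every completely join irreducible of $[\U,\V]$ is of the form $T(B)\vee\U$.

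Third, for injectivity, if $T(B)\vee\U=T(B')\vee\U$ with $B,B'\in\br(\U^\perp\cap\V)$, then since $B'\notin\U$ (it lies in $\U^\perp$ and is nonzero) one has $B'$ not in the unique maximal element below, which forces $B'$ to surject onto $B$ by Lemma~\ref{slem}, and symmetrically $B$ surjects onto $B'$, so $B\simeq B'$ as in Theorem~\ref{cjib}. The step I expect to be the main obstacle is the surjectivity argument: one must show that the relative join $\bigvee_{[\U,\V]}$ of the $T(B)\vee\U$ recovers $\W$ and that this is an \emph{irredundant-enough} description to conclude each completely join irreducible is a single $T(B)\vee\U$ rather than a nontrivial join of several — here the dichotomy ``a brick in $\W$ is in $\U$ or in $\U^\perp$'' (via $t_\U B\in\{0,B\}$) is what makes the bookkeeping work, and it is the one genuinely new ingredient beyond transcribing the proof of Theorem~\ref{cjib}.
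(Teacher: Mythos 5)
Your overall plan coincides with the paper's: relativize the three steps of the proof of Theorem~\ref{cjib}. Steps one and three are essentially the paper's argument. One small slip in step one: from $\mathcal W\subseteq\uperp F(B)$ and $\mathcal W\subseteq\U\vee T(B)$ you may conclude $\mathcal W\subseteq(\U\vee T(B))\cap\uperp F(B)$, and this intersection (which is what the paper takes as the candidate) is the maximum element of $[\U,\V]$ strictly below $\U\vee T(B)$; the containment $\mathcal W\subseteq(T(B)\cap\uperp F(B))\vee\U$ that you assert does not follow from what you proved, but it is also not needed. Note also that you, like the paper, apply Lemma~\ref{slem} to modules of $\U\vee T(B)$ rather than of $T(B)$; this is fine, but strictly one should rerun the filtration argument, using that subquotients coming from $\U$ admit no nonzero maps to $B$ because $B\in\U^\perp$.

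The genuine gap is exactly where you predicted it, in the surjectivity step. The dichotomy ``a brick of $\mathcal W$ lies in $\U$ or in $\U^\perp$, since $t_\U B$ is $0$ or $B$'' is false: a brick need not be simple and can have a proper nonzero submodule lying in $\U$. For the path algebra of $1\leftarrow 2$ with $\U=\langle[10]\rangle$ and $B=[11]$ one has $t_\U B=[10]$, and $B$ lies in neither $\U$ nor $\U^\perp$. So your bookkeeping for writing $\mathcal W$ as a join of the classes $\U\vee T(B)$ with $B\in\br\!(\U^\perp\cap\mathcal W)$ collapses. The paper asserts precisely this decomposition (every $\mathcal W\in[\U,\V]$ is the join of $\U$ and the $T(B)$ for $B$ a brick of $\U^\perp\cap\mathcal W$) and deduces surjectivity from it; to prove it, and thereby repair your argument, use Lemma~\ref{three}: if $\mathcal W'=\U\vee\bigvee_{B\in\br\!(\U^\perp\cap\mathcal W)}T(B)$ were strictly smaller than $\mathcal W$, there would be a brick $B_0\in\mathcal W'^{\perp}\cap\mathcal W\subseteq\U^\perp\cap\mathcal W$, so $B_0\in T(B_0)\subseteq\mathcal W'$ while $B_0\in\mathcal W'^{\perp}$, forcing $\Hom(B_0,B_0)=0$, which is absurd. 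Even quicker: if $\Y\in\cji[\U,\V]$ has maximum strictly smaller element $\Y_*$ in $[\U,\V]$, Lemma~\ref{three} gives a brick $B'\in\Y_*^{\perp}\cap\Y\subseteq\U^\perp\cap\V$; then $\U\vee T(B')$ lies in $[\U,\V]$, is contained in $\Y$, and is not contained in $\Y_*$ (else $B'\in\Y_*\cap\Y_*^{\perp}$), hence equals $\Y$.
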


\begin{proof} Let $B \in \br \!(\U^\perp \cap \V)$. Let $\Y=\U\vee T(B)$.
  Also consider the torsion class $\X=\Y\cap \uperp F(B)$.
  Since $B\in \U^\perp$,
  we have that $\uperp F(B) \supseteq \U$, so $\X$ also lies in $[\U,\V]$. 
  Now $\X$ is strictly
  contained in $\Y$ since it does not contain $B$. But any torsion class
  containing $\U$ which is strictly contained in $\Y$ cannot include any
  module admitting a surjective map onto $B$. By Lemma \ref{slem}, any such
  torsion class is therefore contained in $\uperp B=\uperp F(B)$. This shows
  that $\Y$ is completely join irreducible in
  $[\U,\V]$ and that $\Y$ covers $\X$.

  We now show that all the completely join irreducible elements of $[\U,\V]$
  correspond to some brick as above.
  Any torsion class can be written as the join of the torsion classes
  corresponding to the bricks it contains, so any torsion class
  in $[\U,\V]$ can be written as the join of $\U$ and a set of torsion classes
  of the form $T(B)$ for $B$ lying in some subset of $\U^\perp \cap \V$. It
  follows that the only completely join irreducible elements of
  $[\U,\V]$ are those of the form $\U\vee T(B)$.

  Finally, the map from bricks to torsion classes is invertible. If
  $\Y$ is a completely join irreducible torsion class in
  $[\U,\V]$, with $\X$ the unique torsion class in $[\U,\V]$ which it
  covers, then the brick corresponding to $\Y$ is $\hat\gamma(\Y\gtrdot \X)$.
  \end{proof}

Based on this, we can now easily establish the following proposition:

\begin{proposition}[\cite{DIRRT}]\label{interval} Let $\U < \V$ be two torsion classes in $\mod A$.
  Then there are covers in $[\U,\V]$ labelled by each brick in
  $\U^\perp \cap \V$, and no others. \end{proposition}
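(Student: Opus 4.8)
The plan is to read this off directly from Proposition~\ref{rcjib}. I would begin by recalling that $[\U,\V]$, being an interval of the completely semidistributive lattice $\tors A$, is itself completely semidistributive, with all (nonempty) meets and joins computed as in $\tors A$. Hence every cover $\Y\gtrdot\X$ inside $[\U,\V]$ carries a well-defined label $\gamma(\Y\gtrdot\X)\in\cji[\U,\V]$, and by Proposition~\ref{rcjib} the set $\cji[\U,\V]$ consists precisely of the torsion classes $T(B)\vee\U$ with $B$ running over $\br \!(\U^\perp\cap\V)$. As before, I write $\hat\gamma(\Y\gtrdot\X)$ for the brick $B\in\br \!(\U^\perp\cap\V)$ for which $\gamma(\Y\gtrdot\X)=T(B)\vee\U$.

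With this set-up the ``and no others'' part is immediate: for any cover $\Y\gtrdot\X$ in $[\U,\V]$ the element $\gamma(\Y\gtrdot\X)$ is, by construction, completely join irreducible in $[\U,\V]$, so it equals $T(B)\vee\U$ for some $B\in\br \!(\U^\perp\cap\V)$, and that $B$ is the label of the cover.

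For the existence part I would fix $B\in\br \!(\U^\perp\cap\V)$ and reuse the cover produced in the proof of Proposition~\ref{rcjib}: setting $\Y:=T(B)\vee\U$ and $\X:=\Y\cap\uperp F(B)$, that proof shows (using $B\in\U^\perp$, so that $\uperp F(B)\supseteq\U$) that both torsion classes lie in $[\U,\V]$, that $\Y$ is completely join irreducible in $[\U,\V]$, and that $\Y\gtrdot\X$. It then remains only to identify the label of this particular cover, that is, to show $\gamma(\Y\gtrdot\X)=\Y$. Since $\Y$ is completely join irreducible in $[\U,\V]$ and covers $\X$, we have $\X=\Y_*$. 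Now $\gamma(\Y\gtrdot\X)$ is the minimum of $\{\Z\in[\U,\V]\mid\X\vee\Z=\Y\}$; every $\Z$ in this set satisfies $\Z\not\leq\Y_*$, and by complete join irreducibility any element $\leq\Y$ that is $\not\leq\Y_*$ equals $\Y$. As the minimum of this set lies below its member $\Y$, it must itself equal $\Y$, so $\gamma(\Y\gtrdot\X)=\Y=T(B)\vee\U$ and hence $\hat\gamma(\Y\gtrdot\X)=B$.

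I do not anticipate a real obstacle; everything of substance is already contained in Proposition~\ref{rcjib}. The two points that need a moment's attention are the bookkeeping that the $\gamma$-labelling in play is the one internal to $[\U,\V]$, not the one for $\tors A$, and the small general fact used in the existence step, namely that a completely join irreducible element $j$ is always the $\gamma$-label of the cover $j\gtrdot j_*$ directly beneath it.
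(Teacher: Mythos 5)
Your overall route is the paper's own: everything is made to rest on Proposition~\ref{rcjib}, and the purely lattice-theoretic points you add are correct (an interval of a completely semidistributive complete lattice is completely semidistributive, with nonempty meets and joins computed as in the ambient lattice, and a completely join irreducible element $j$ is the $\gamma$-label of the cover $j\gtrdot j_*$). The gap is in what you take ``labelled'' to mean. You state explicitly that the labelling in play is the $\gamma$-labelling internal to $[\U,\V]$, transported to bricks via the bijection of Proposition~\ref{rcjib}. But the proposition, and its later use in Theorem~\ref{thb}, refers to the brick labelling $\hat\gamma$ of Section~\ref{cons}: a cover $\Y\gtrdot\X$ with $\U\leq\X\lessdot\Y\leq\V$ is automatically a cover in $\tors A$, and its label is the unique brick contained in $\Y$ but not in $\X$. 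Under your reinterpretation the ``no others'' half is true by construction, so the real content of that half --- that $\hat\gamma(\Y\gtrdot\X)$ lies in $\U^\perp\cap\V$ --- is never proved; likewise your existence step identifies the interval-internal label of the cover $\Y=T(B)\vee\U\gtrdot\X=\Y\cap\uperp F(B)$ as $B$, not its $\hat\gamma$-label.

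The repair is short, but you need one of its two forms. Either argue directly with $\hat\gamma$: for any cover $\Y\gtrdot\X$ inside $[\U,\V]$, the label satisfies $\hat\gamma(\Y\gtrdot\X)\in\Y\subseteq\V$ and $\hat\gamma(\Y\gtrdot\X)\in\X^\perp\subseteq\U^\perp$ (recall, as in the proof of Proposition~\ref{agree}, that the label is the unique brick in $\X^\perp\cap\Y$); and for your constructed cover, $B\in\Y$ while $B\notin\X$ since $B\notin\uperp F(B)$, so the unique brick in $\Y$ not in $\X$ is $B$. Or reconcile the two labellings once and for all: if $B'=\hat\gamma(\Y\gtrdot\X)$ for a cover in $[\U,\V]$, then $B'\in\U^\perp\cap\V$ as above, $\X\vee(\U\vee T(B'))=\Y$, and any $\Z\in[\U,\V]$ with $\X\vee\Z=\Y$ contains $T(B')$ by the minimality defining $\gamma_{\tors A}(\Y\gtrdot\X)=T(B')$, hence contains $\U\vee T(B')$; so the interval-internal $\gamma$ equals $\U\vee T(B')$, and by the injectivity in Proposition~\ref{rcjib} your label coincides with $\hat\gamma(\Y\gtrdot\X)$. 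As written, the proof establishes the statement only for a labelling the paper never defines, whereas the paper's two checks with $\hat\gamma$ itself are each one line.
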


Note that $\U^\perp \cap \V$ is non-empty by Lemma \ref{three}.

\begin{proof}
  It is clear that no other brick can appear as a label since if
  $\V\geq \Y\gtrdot \X\geq \U$, and 
  $\hat\gamma(\Y\gtrdot\X)=B$ then $B\in \Y\subseteq \V$ and
  $B\in \X^\perp \subseteq \U^\perp$.

  For the converse direction, if $B$ is a brick in $\U^\perp\cap \V$, then
  by Proposition \ref{rcjib}, there is a completely join irreducible
  in $[\U,\V]$ corresponding to $B$, and the cover relation down from it
  in $[\U,\V]$ is labelled by $B$.
  \end{proof}
  
%  Now suppose that $B\in \U^\perp \cap \V$. We will show that
%  $\Y=\U \vee T(B)$ is completely join irreducible in $[\U,\V]$, and that
%  the brick labelling
%  the unique cover relation leading up to it in that interval
%  is $B$.

%  The proof is very similar to the proof of Theorem \ref{cjib}.
%  Consider the torsion class $\X=\Y\cap \uperp F(B)$. Since $B\in \U^\perp$,
%  we have that $\uperp F(B) \supseteq \U$, so $\X$ also lies in $[\U,\V]$. 
%  Now $\X$ is strictly
%  contained in $\Y$ since it does not contain $B$. But any torsion class
%  containing $\U$ which is strictly contained in $\Y$ cannot include any
%  module admitting a surjective map onto $B$. By Lemma \ref{slem}, any such
%  torsion class is therefore contained in $\uperp B=\uperp F(B)$. This shows
%  that $\Y$ covers $\X$ and that $\Y$ is completely join irreducible in
%  $[\U,\V]$. Since $\X\vee T(B)=\Y$ but $\X\vee T(B)_*=\X$,
%  $\hat\gamma(\Y\gtrdot\X)=B$, as desired.
%  \end{proof}

Theorem \ref{tha} follows directly from Proposition \ref{interval}, since
if $\U<\V$, then by Lemma \ref{three}, $\U^\perp\cap \V$ contains a brick.

Theorem \ref{thb} follows as well, by combining Proposition \ref{equal} with Proposition \ref{interval}.

  \section{A combinatorial application: finite semi-distributive lattices}

  Consider the following lattice:
  $$\begin{tikzpicture}[scale=.75] \coordinate (a) at (0,0);
    \coordinate (b) at (1,-1);
    \coordinate (c) at (2,-2);
    \coordinate (d) at (0,-2);
    \coordinate (e) at (1,-3);
    \coordinate (f) at (0,-4);
    \coordinate (g) at (-2,-2);

    \draw (a) -- (c) -- (f) -- (g) -- (a);
    \draw (b) -- (d) -- (e);
    \foreach \x in {(a), (b), (c), (d), (e), (f), (g)}{
        \fill \x circle[radius=2pt];}
\end{tikzpicture}$$
  This lattice is semidistributive. Suppose that it were isomorphic to
  $\tors A$ for some $A$. 
  We see that this lattice has four
  (completely) join irreducible elements and four (completely) meet irreducible
  elements, so $\mod A$ would necessarily have four bricks by Theorem \ref{cjib}. We see that two
  of the bricks would have to be simple, call them $S_1$ and $S_2$,
  and there would be maps as follows, with $X$ and $Y$ being the other two
  bricks:

  $$\begin{tikzpicture}
    \node (w) at (0,0) {$S_1$};
    \node (x) at (1.5,.5) {$X$};
    \node (y) at (1.5,-.5) {$Y$};
    \node (z) at (3,0) {$S_2$};
    \draw[right hook->] (w) -- (x);
    \draw [right hook->] (w) -- (y);
    \draw [->>] (x) -- (z);
    \draw [->>] (y) -- (z);
\end{tikzpicture}$$
  There is no such module category. Results of \cite{AP}, extending \cite{J},
  can also be used to construct many examples of finite semidistributive
  lattices which are not lattices of torsion classes.

  In light of this, it would seem unlikely that representation theory could help us
  to understand general finite semidistributive lattices. Nonetheless, it turns out
  that it can. Indeed, in \cite{RST}, inspired by properties of lattices of torsion classes,
  we gave a construction which yields exactly the finite semidistributive lattices. I will close by
  describing this construction.
  
  %In \cite{RST}, we define a 
  
  %: there is a kind of combinatorial relaxation of the notion
  %of torsion class which allows us to construct exactly all finite semidistributive
  %lattices. This is the main result of \cite{RST}, and I shall not attempt
  %to prove it here, but I will at least state
  %the result.
  
  Given any finite set, which we will call $\Br$, and a reflexive relation
  $\sto$ on $\Br$, for a subset $\mathcal C\subset \Br$, we can define $\mathcal C^\perp=
  \{ Y \in \Br \mid \forall X\in \mathcal C, X \not\sto Y\}$, and
  ${}^\perp\mathcal C=\{X \mid \forall Y \in \mathcal C, X \not\sto Y\}$.
  Torsion pairs in $\Br$ are then defined to be pairs of subsets
  $(\mathcal T,\mathcal F)$ such that $\mathcal T^\perp=\mathcal F$ and
  $\mathcal T= {}^\perp\mathcal F$. We can then define $\torss (\Br,\sto)$
  to be the set of torsion pairs $(\mathcal T,\mathcal F)$, ordered by inclusion on
  $\T$.
    If we allow ourselves to start with any set $\Br$ and reflexive relation $\sto$,
  this construction is so general as to be able to construct any
  finite lattice, as was discovered by Markowsky \cite{Ma}.

Therefore, if we want to get only semidistributive lattices, we need to put some further conditions on
$\rightarrow$. It turns out that the way to do this is the insist that the relation $\rightarrow$ on the set $\Br$ be more like the relation ``there exists a non-zero morphism'' on the set of bricks of a module category. 

We make this precise as follows.
  Starting from a reflexive relation
  $\sto$, define two other relations, $\onto$ and $\into$. We define
  $X\onto Y$ iff whenever $Y\sto Z$ then $X\sto Z$. Similarly, we define
  $X\into Y$ iff whenever $Y\sto Z$ then  $X\sto Z$.
  Again, the intuition from representation theory is clear:  
  if $M$ and $N$
  are $A$-modules and there is a surjection from
  $M$ to $N$ then whenever there is
  a non-zero map from $N$ to some $L$, then there is also a non-zero map
  from $M$ to $L$ and dually for injections. (Note, though, that if we
  take $\Br=\br\mod A$ and take $\sto$ to be ``there exists a non-zero
  morphism'', the relations $\onto$ and $\into$ defined as above are not exactly
  ``there exists a surjection'' and ``there exists an injection''. See
  \cite[Section 8]{RST} for more details.)

  We say that a reflexive
  relation $\sto$ on $\Br$ is factorizable if it satisfies
  the following two conditions:
  \begin{itemize}
  \item For any $X,Z \in \Br$ with $X\sto Z$, there exists $Y\in \Br$ such that
    $$X\onto Y \into Z.$$
  \item Any of
    $X\onto Y \onto X$ or $X\into Y \onto X$, or $X \into Y \into X$ imply
  $X=Y$.
    \end{itemize}
  As is probably clear, the
  motivating intuition for the first condition is that a non-zero
  morphism can be factored as a surjection followed by an injection.

  We can now state the main result of \cite{RST}:

  \begin{thm}[{\cite[Theorem 1.2]{RST}}] Let $\Br$ be a finite set, and $\sto$ a reflexive
    factorizable relation on $\Br$. Then $\torss (\Br,\sto)$ is a
    semidistributive lattice, and every semidistributive lattice arises in
    this way for a choice of $\Br$ and $\sto$ which is unique up to
    isomorphism. \end{thm}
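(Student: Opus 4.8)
The plan is to prove the theorem in two halves: first, that $\torss(\Br, \sto)$ is always a semidistributive lattice when $\sto$ is factorizable, and second, that every finite semidistributive lattice $L$ arises this way, from essentially unique data. For the first half, I would begin by checking that $\torss(\Br, \sto)$ is a lattice at all. The operations $\mathcal C \to {}^\perp(\mathcal C^\perp)$ and $\mathcal C \to (\uperp \mathcal C)^\perp$ are closure operators (this is the standard Galois-connection argument, exactly as in the module-category case handled in Theorem \ref{rev}), so the torsion pairs form a complete lattice with $\mathcal T$-parts closed under intersection. The meet is intersection on $\mathcal T$-parts; the join is $\uperp$ of the intersection of the $\mathcal F$-parts. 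To establish join semidistributivity, the key is to mimic the brick-labelling machinery: I would show that the factorizability axioms force the abstract analogue of Lemma \ref{slem} — namely, if $X \in \mathcal T$ and $\Hom$-type arrows into some $Z$ are nonzero, then $X \onto Y$ for the relevant intermediate $Y$ — and then run the argument of Proposition \ref{rcjib}/Theorem \ref{cjib} to show each cover $\mathcal Y \gtrdot \mathcal X$ is labelled by a unique element of $\Br$ (the unique $B \in \mathcal X^\perp \cap \mathcal Y$), with the completely join irreducibles biject with singletons' torsion closures. From there, complete join semidistributivity follows by the same minimal-counterexample / filtration argument as in the proof that $\tors A$ is completely semidistributive. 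Meet semidistributivity follows by the symmetry of the axioms under swapping $\onto$ and $\into$ (note the first factorizability axiom and the second clause are manifestly self-dual), which gives an order-reversing bijection $\mathcal T \leftrightarrow \mathcal F$ between $\torss(\Br,\sto)$ and the corresponding ``torsion-free'' lattice.

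For the second half — that every finite semidistributive lattice arises — I would take $\Br = \cji L$ (equivalently, via $\kappa$, the set $\cmi L$), and define the relation $\sto$ using the labelled cover structure: set $j \sto j'$ iff $j' \not\leq \kappa(j)^*$, or some equivalent intrinsic condition expressible via the maps $\gamma$, $\mu$, $\kappa$, $\kappa^d$ developed in Section \ref{csd}. The intuition is that $j \sto j'$ should encode ``$T(j)$ is not contained in the maximal torsion class avoiding $j'$''. One then checks: (i) this $\sto$ is reflexive; (ii) it is factorizable, where the intermediate element $Y$ in the first axiom is produced from the $\kappa$-correspondence (a join irreducible paired with its meet irreducible partner gives exactly the surjection-then-injection factorization of an abstract morphism); and (iii) the map sending a torsion class $\mathcal T \in L$ to the pair $(\{j \in \cji L : j \leq \mathcal T\}, \{m \in \cmi L : m \geq \mathcal T\})$, reinterpreted as a subset of $\Br$ and its complement-type data, is a lattice isomorphism $L \cong \torss(\Br, \sto)$. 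The fact that $L$ is semidistributive (hence, by a known structural result, every element is determined by the join irreducibles below it and by the meet irreducibles above it — this is where weak atomicity of finite lattices and the canonical join/meet representations enter) makes this map a bijection, and that it is a lattice morphism is a compatibility check between the abstract $\perp$ operations and $\vee, \wedge$ in $L$.

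The main obstacle, I expect, is the \textbf{uniqueness} clause and, relatedly, pinning down the correct definition of $\sto$ so that the round trip $L \to (\Br, \sto) \to \torss(\Br,\sto) \to L$ is the identity and the other round trip recovers the relation up to isomorphism. The existence of \emph{some} relation giving back $L$ is, by Markowsky's theorem, automatic; the content is that there is a \emph{factorizable} one and that it is forced. For uniqueness one must argue that any factorizable $\sto'$ on any finite set $\Br'$ with $\torss(\Br',\sto') \cong L$ has $\Br'$ canonically identified with $\cji L$ (via the cover-labelling, which is intrinsic to the lattice once semidistributivity is known) and $\sto'$ recovered from the lattice structure. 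This requires showing the brick-labelling is not just \emph{a} labelling but \emph{determines} the abstract morphism relation — i.e., that factorizability is rigid enough to be reconstructible from $\torss(\Br,\sto)$ alone. I would isolate this as the technical heart: characterize, purely in terms of the lattice $\torss(\Br,\sto)$ and its $\kappa$-bijection, when $X \sto Y$ holds, and check this characterization is forced by the two factorizability axioms.
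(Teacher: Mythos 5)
First, note that the paper itself does not prove this statement: it is quoted from \cite{RST} and explicitly presented without proof, so there is no internal argument to compare yours against. Judged on its own, your proposal has the right overall shape for the hard direction (take $\Br=\cji L$, paired with $\cmi L$ via $\kappa$; define $\sto$ by a non-containment condition involving $\kappa$; send $\T\in L$ to the pair of sets of irreducibles), which is indeed the spirit of the construction in \cite{RST}. But what you have written is a plan in which every technically substantive step is deferred, and at least one step, as stated, would fail.

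Concretely: (1) the assertion that complete join semidistributivity of $\torss(\Br,\sto)$ ``follows by the same minimal-counterexample / filtration argument'' as for $\tors A$ cannot work, because the abstract setting has no modules, submodules, dimensions, or filtrations; the join here is $\uperp\bigl(\bigcap_i \F_i\bigr)$ and there is no analogue of Proposition \ref{p1} describing it by filtrations. Any proof must instead extract semidistributivity from the two factorizability axioms themselves (for instance via an abstract analogue of Lemma \ref{slem}), and you never indicate where the second axiom (the antisymmetry-type condition on $\onto$ and $\into$) enters --- yet without some such input the claim is false, since by Markowsky's theorem an arbitrary reflexive relation yields an arbitrary finite lattice. (2) Your explicit formula $j\sto j'$ iff $j'\not\leq\kappa(j)^*$ is not the right relation; your own intuition (``$T(j)$ is not contained in the maximal torsion class avoiding $j'$'') corresponds to $j\not\leq\kappa(j')$, matching the module-theoretic fact that $\Hom(B,B')\neq 0$ iff $T(B)\not\subseteq\uperp F(B')$, and with the garbled relation both the factorizability check and the claimed isomorphism $L\cong\torss(\Br,\sto)$ break down. (3) The verification that this relation is factorizable, that the proposed map is a lattice isomorphism, and above all the uniqueness clause of the theorem are exactly the content of the result, and you explicitly set them aside as ``the technical heart.'' (Minor: invoking weak atomicity for finite lattices adds nothing, as it is automatic there.) So while the strategy points in the right direction, the proposal as it stands does not establish either half of the theorem.
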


  I close with the following question:

  \begin{question} Is there a way to interpret any finite semidistributive
    lattice as
    the lattice of torsion classes of a ``real'' category?
  \end{question}

  The question is deliberately worded somewhat imprecisely. Another way to
  ask the question would be to ask for a representation-theoretic meaning
  to the construction of finite semidistributive lattices of \cite{RST}.

  \section*{Acknowledgements} I would like to thank my coauthors on
  \cite{IRRTT,DIRRT,RST}, from whom I have  learned a great deal.
  It is my pleasure to acknowledge NSERC and the Canada Research Chairs program
  for their financial support. Thanks to Nathan Reading, Alexander Garver, and two anonymous referees for helpful comments
  on this manuscript.
  I am extremely grateful to have been given
  the opportunity
  to present this material at Zhejiang University in 2018 and at
  the Isfahan School on Representations
  of Algebras in 2019. Thanks to Fang Li for the invitation to
  Zhejiang and to the organizers of the Isfahan School and Conference on
  Representations of Algebras, and in particular Javad Asadollahi, for
  the invitation to speak in Isfahan, and for the invitation to prepare
  this contribution to the special issue.

  \end{document}